%% file: GPPE.tex
\documentclass[final,hidelinks,onefignum,onetabnum]{siamart220329}

\input{ex_shared}

\ifpdf
\hypersetup{
	pdftitle={},
	pdfauthor={}
}
\fi

\usepackage{comment}

\begin{document}
	
	\maketitle
	\begin{abstract}
		We propose a fully decoupled, structure-preserving relaxation Crank--Nicolson finite element method (FEM) for the coupled Gross--Pitaevskii--Poisson (GPP) system modeling ultracold plasmas. By introducing suitable auxiliary variables to reformulate the nonlinear interaction and charge density terms, the original system is recast into an equivalent form that enables a linear, fully decoupled numerical scheme. The proposed method preserves key physical invariants, including the mass of each component and a modified discrete energy, at the fully discrete level. We establish the well-posedness and uniqueness of the scheme and rigorously derive optimal error estimates, achieving second-order accuracy in time and optimal $(k+1)$-th order convergence in space for $P^k$ finite element approximations. Numerical experiments confirm the theoretical results and demonstrate the effectiveness of the method in preserving conservation properties and accurately capturing complex dynamical behaviors of the coupled GPP system.
	\end{abstract}
	
	\begin{keywords}
		Gross--Pitaevskii--Poisson system, relaxation Crank--Nicolson method, finite element method, structure preserving, optimal error estimate.
	\end{keywords}
	
	\begin{MSCcodes}
		35Q55, 65M15, 65M60
	\end{MSCcodes}
	
	
	\section{Introduction}
	
	In this paper, we are interested in the development of efficient numerical methods for solving the coupled Gross--Pitaevskii--Poisson (GPP) model for ultracold plasmas \cite{CGPP_2020}, 
	\begin{subequations}\label{TargetEq}
		\begin{align}
			&\mathbf{i}\partial_t \psi_+ =\left[ -\frac{1}{2} \nabla ^2 + \left(g \vert {\psi_+} \vert ^2 + G \vert {\psi_-} \vert ^2 \right) + q\phi \right] \psi_+, && \text{in } \R^d \times (0,T], \label{TargetEq1} \\
			&\mathbf{i}\partial_t \psi_- =\left[ -\frac{1}{2} \nabla ^2 + \left(g \vert {\psi_-} \vert ^2 + G \vert {\psi_+} \vert ^2 \right) - q\phi \right] \psi_-,&& \text{in } \R^d \times (0,T], \label{TargetEq2} \\ 
			&\nabla^2 \phi = -4 \pi q \left(\vert {\psi_+} \vert ^2 - \vert {\psi_-} \vert ^2 \right),&& \text{in } \R^d \times (0,T], \label{TargetEq3}
		\end{align}
	\end{subequations} 
	where $\mathbf{i} = \sqrt{-1}$ denotes the imaginary unit, and $\mathbf{x}=(x_1,x_2,...x_d) \in \R^d$ with $d=1,2,3$. 
	The model describes the dynamics of two interacting quantum wave functions $\psi_\pm$, which are coupled through a self-consistent Poisson potential $\phi$ within the framework of the ultra-cold plasma model \cite{CGPP_2020}. 
	The real-valued parameters $g$, $G$, and $q$ denote the self-interaction coefficient, interspecies interaction strength, and scaled charge parameter, respectively. Here, $|\psi_\pm|^2$ denote the charge densities of the two components, $|\psi_\pm|^2\psi_\pm$ are the self-interaction terms, $|\psi_\mp|^2\psi_\pm$ are the cross-interaction terms, and $\phi\,\psi_\pm$ describe the Coulomb coupling induced by the self-consistent electrostatic potential $\phi$.

	The nonlinear Schrödinger equation (NLSE), 
	\begin{equation}
		\mathbf{i}\partial_t \psi =\left[ -\nabla ^2 + f( \vert {\psi} \vert ^2 ) \right] \psi \label{NLS equation},
	\end{equation}
	serves as a fundamental model for a wide range of physical phenomena, including nonlinear optics \cite{GPoptschen2006foundations,GPoptssulem1999nonlinear}, Bose-Einstein condensates (BECs) \cite{GPBose-Einsteincondensatesbao2012mathematical,GPBose-Einsteincondensateserdos2010derivation,GPBose-Einsteincondensateslieb2001rigorous}, deep-water modulation \cite{GPWaterperegrine1983water,GPWateryuen1980instabilities}, and other applications. When the self-interaction takes the form of a cubic nonlinearity, 
	$f(\vert {\psi} \vert ^2)=g\vert {\psi} \vert ^2$, the NLSE \eqref{NLS equation} is commonly referred to as the Gross-Pitaevskii equation (GPE) \cite{GPBose-Einsteincondensatesbao2012mathematical}. 
	
	
	The GPE \eqref{NLS equation} can be coupled with a Poisson equation to model interactions between the wave function $\psi$ and an electrostatic or gravitational potential $\phi$. The resulting  Gross--Pitaevskii--Poisson (GPP) equations \cite{liu2026structure, GPPEruffini_systems_1969} are given by
	\begin{subequations}\label{GPP1 equations}
		\begin{align}
			\mathbf{i}\partial_t \psi &= \left[ -\nabla^2 + {g|\psi|^2} + q\phi \right]\psi, \\
			\nabla^2 \phi &= -\mu \left(|\psi|^2 - c \right).
		\end{align}
	\end{subequations}
	Here, $\mu>0$ ($<0$) corresponds to repulsive (attractive) interactions, and $c$ denotes the background charge density. This system captures the self-consistent coupling between the wave function and the potential, enabling the study of long-range interactions.
	
	Coupling with the Poisson equation introduces significant numerical challenges, primarily due to the increased nonlinearity of the system and the need to preserve appropriate invariant properties at the same time \cite{liu2026structure}. Take the GPP system \eqref{GPP1 equations} as an example, the model contains two distinct nonlinear components: the self-repulsion term $|\psi|^2\psi$ in the Gross--Pitaevskii equation and the charge density term $|\psi|^2$ in the Poisson equation. Traditional approaches typically employ different numerical techniques to handle these two nonlinearities. For instance, in \cite{gong2022sav}, a Crank–Nicolson scalar auxiliary variable (SAV) scheme was proposed, where the self-repulsion term $|\psi|^2\psi$ is treated using the SAV approach, while the charge density is handled via iterative methods. Similarly, in \cite{DGyi2022mass}, the self-repulsion term is addressed using a relaxation technique, whereas the charge density is again solved iteratively.
	In contrast, \cite{liu2026structure} proposed a unified relaxation-based approach that introduces a single auxiliary variable to simultaneously reformulate the different nonlinear terms in both equations. 
	
	For systems with multiple interacting species, the GPE \eqref{NLS equation} can be generalized into a coupled system to account for interspecies interactions and self-consistent potentials. A two-species coupled GPE takes the form:
	\begin{subequations}\label{coulped NLS equation}
		\begin{align}
			&\mathbf{i}\partial_t \psi_+ =\left[ -\frac{1}{2} \nabla ^2 + \left(g \vert {\psi_+} \vert ^2 + G \vert {\psi_-} \vert ^2 \right)  \right] \psi_+ , \\
			&\mathbf{i}\partial_t \psi_- =\left[ -\frac{1}{2} \nabla ^2 + \left(g \vert {\psi_-} \vert ^2 + G \vert {\psi_+} \vert ^2 \right)  \right] \psi_-,
		\end{align}
	\end{subequations}
	where $\psi_{+}$ and $\psi_{-}$ denote the wave functions of cations and anions, respectively, and $g, G$ are dimensionless constants describing interaction strength \cite{GPandGPPantoine_computational_2013}. 
	This formulation is widely used in the study of multi-component BECs \cite{GPBose-Einsteincondensatesbao2012mathematical,CGPpitaevskii2003bose} and laser beam interactions \cite{CGPbao2007time, CGPnewell1985solitons}.

	In the coupled GPE system \eqref{coulped NLS equation}, the four nonlinear interaction terms $\vert \psi_\pm \vert^2 \psi_\pm$, $\vert \psi_\pm \vert^2 \psi_\mp$ are commonly treated either explicitly or implicitly via iterative solvers. 
	A Crank–Nicolson-type method in \cite{Adhikari2001} treats the nonlinear terms explicitly, yielding a linear scheme, but requires small mesh sizes and time steps to avoid oscillations and may reduce temporal accuracy to first order.
	In \cite{BaoCai2011}, a Crank–Nicolson approach with averaged nonlinear terms leads to a nonlinear and costly implicit scheme, while a stabilized backward Euler method treats nonlinearities explicitly for improved efficiency.
	Second-order time-splitting sine spectral methods \cite{Bao2004,Wang2007} also handle the nonlinear terms explicitly, resulting in schemes that require solving a sequence of linear subproblems (e.g., seven linear systems for the coupled unknowns $\psi_+$ and $\psi_-$ per time step), while remaining explicit, unconditionally stable, time-reversible, time-translationally invariant, and spectrally accurate in space with second-order accuracy in time. 
	Other splitting approaches via coordinate transformation are discussed in \cite{ming2014efficient}. High-order IMEX spectral schemes \cite{AntoineBesseRispoli2016} treat linear terms implicitly and nonlinear terms explicitly.
	
	For the GPP model \eqref{TargetEq}, the dynamics are governed by the interplay among the self-interaction \(g\), interspecies repulsion \(G\), and charge parameter \(q\). In the repulsive case (\(g>0\)), a uniformly mixed neutral state is stable at low densities, while modulational instability and density-wave patterns arise only when \(G>g\) and the density exceeds a threshold. In the attractive case (\(g<0\)), overlapped neutral bright solitons exist for \(G<|g|\), whereas stronger interspecies repulsion induces splitting into dipole states, with quadrupole solitons emerging in certain regimes \cite{mineev1974theory, CGPP_2020}.
	The parameter \(q\) denotes the charge magnitude: larger \(q\) strengthens long-range electrostatic interactions, while \(q=0\) removes the Poisson coupling and reduces \eqref{TargetEq} to the coupled GPE system \eqref{coulped NLS equation} \cite{CGPP_2020}. 

	The development of efficient numerical methods for the GPP model \eqref{TargetEq} is particularly challenging because of the coexistence of multiple interacting species and the coupling with the Poisson equation, which together introduce several nonlinear interaction terms and charge-density terms. To the best of our knowledge, the only existing numerical study based on the imaginary time integration method \cite{GPandGPPantoine_computational_2013} was reported in \cite{CGPP_2020} to investigate the dynamics of the GPP model \eqref{TargetEq}. However, the implementation details of this approach were not provided.
	
	Motivated by the relaxation techniques in \cite{GPandGPPantoine_computational_2013, liu2026structure}, as well as the efficient treatment of Poisson coupling proposed in \cite{liu2026structure}, we develop a relaxation Crank–Nicolson finite element discretization for the coupled GPP model \eqref{TargetEq}. By introducing two auxiliary variables $Z_\pm = |\psi_\pm|^2$, the four nonlinear interaction terms and the two nonlinear charge-density terms in the original system are reformulated into equivalent forms.
	The resulting scheme is linear and fully decoupled, requiring only two independent linear systems for $\psi_\pm$ and one Poisson equation for $\phi$ at each time step. This significantly improves computational efficiency compared with up to second-order splitting-type methods, which require solving seven independent linear systems. Moreover, the scheme is well-posed and preserves the discrete mass of each species as well as a modified total energy, while remaining straightforward to implement. Moreover, the scheme is well-posed and preserves the discrete mass of each species, as well as a modified total energy, while remaining straightforward to implement.
	
	Error analyses for relaxation-type methods applied to the nonlinear Schrödinger equation and the GPP system \eqref{GPP1 equations} have been established in \cite{ShBhestimation, liu2026structure}. Building on some of the analytical tools in \cite{ShBhestimation, liu2026structure}, we derive rigorous error estimates for the proposed method. The analysis combines projection-based error splitting, induction arguments, uniform boundedness of the finite element approximations, and a careful handling of the interactions among multiple species as well as the coupling with the Poisson equation. This yields optimal second-order convergence in time and optimal $(k+1)$-th order convergence in space, where $k$ denotes the polynomial degree of the finite element space. The proposed relaxation Crank–Nicolson finite element method and its accompanying analysis extend naturally to the coupled GPE system \eqref{coulped NLS equation}.
	
	The contributions, innovations, and significance of this work are summarized as follows:
	\begin{itemize}
		\item For the coupled Gross--Pitaevskii--Poisson system, which involves multiple interacting species together with Poisson coupling, we develop a relaxation Crank--Nicolson finite element method by introducing only two auxiliary variables to reformulate the four nonlinear interaction terms and the two charge-density terms.
		
		\item The proposed method is fully decoupled and linear at each time step. It requires neither iterative procedures for nonlinear terms nor the solution of a coupled algebraic system, and thus is computationally efficient. Despite its linear and decoupled structure, the method preserves the discrete mass of each component and a modified discrete energy, thereby inheriting important structure-preserving properties of the continuous model.
		
		\item We establish the well-posedness and uniqueness of the fully discrete scheme and derive rigorous optimal error estimates. In particular, the method achieves second-order accuracy in time and optimal $(k+1)$-th order convergence in space in the $L^2$-norm, where $k$ is the polynomial degree of the finite element space.
		
		\item  While retaining second-order accuracy in time, the proposed method significantly reduces the computational cost. In contrast to existing splitting-type methods for the coupled GPE, which require up to seven groups of linear systems, the proposed approach involves only two groups of decoupled linear systems; for the coupled GPP model, only one additional Poisson equation needs to be solved.
		
		\item Numerical experiments are presented to verify the theoretical error estimates and the conservation properties, and to demonstrate the effectiveness of the proposed method in accurately capturing the dynamics of the coupled Gross--Pitaevskii--Poisson system.
	\end{itemize}
	
	The organization of this paper is as follows. In \Cref{RCN FEM}, we introduce an equivalent reformulation of the system and present a relaxation Crank–Nicolson finite element method for the coupled GPP model. We further establish the conservation properties. In \Cref{secErrorEstimate}, we derive optimal $L^2$-norm error estimates for the fully discrete solutions, achieving second-order accuracy in time and $(k+1)$-th order accuracy in space. The analysis is carried out via mathematical induction, with detailed proofs deferred to \Cref{Appendix}. \Cref{secNmecical} presents several numerical examples to verify the masses and energy conservation properties of the proposed method and to confirm the theoretical error estimates.

	\section{The Relaxation Crank--Nicolson Finite Element Method}\label{RCN FEM}
	
	The GPP model \eqref{TargetEq} is originally posed on $\mathbb{R}^d$. For numerical purposes, we restrict it to a bounded domain $\Omega \subset \mathbb{R}^d$ \cite{BDbian_almost_2016,BDszeftel_design_2004, CGPP_2020, liu2026structure} and impose periodic boundary conditions on $\psi_\pm$ and $\phi$ \cite{BDillner_quasi-linear_1997,sukumar2009classicalBoundaryC}. The proposed methods and analysis can be extended to other boundary conditions.
	The initial condition is prescribed as
	\begin{equation}\label{TargetEq5}
		\psi_\pm(\mathbf{x},0) = \psi_{\pm,0}(\mathbf{x}), \quad \text{in } \Omega,
	\end{equation}
	for given functions $\psi_{\pm,0}(\mathbf{x})$.
	The GPP model \eqref{TargetEq} conserves the following quantities \cite{CGPP_2020}:  \\
	\noindent (i) Masses conservation
	\begin{equation}
		M(\psi_{\pm}(t)) = M(\psi_{\pm}(0)), \quad \text{where the masses } M(\psi_{\pm}) = \int_{\Omega} \vert \psi_\pm(\mathbf{x},t) \vert^2 \, d\mathbf{x}; \label{PrimeMass}
	\end{equation}
	\noindent (ii) Energy conservation
	\begin{equation}
		E(\psi_{+}(t),\psi_{-}(t),\phi(t)) = E(\psi_{+}(0),\psi_{-}(0),\phi(0)), \label{PrimeEnergy}
	\end{equation}
	where the discrete energy
	\begin{equation*}
		E(\psi_{+},\psi_{-},\phi) = \int_{\Omega} \frac{1}{2} \Big( \vert \nabla \psi_+ \vert^2 + \vert \nabla \psi_- \vert^2 \Big)
		+ \frac{g}{2} \left( \vert \psi_+ \vert^4 + \vert \psi_- \vert^4 \right) 
		+ G \vert \psi_+ \vert^2 \vert \psi_- \vert^2 
		+ \frac{1}{8 \pi} \vert \nabla \phi \vert^2 \, d\mathbf{x}.
	\end{equation*}
	For the well-posedness of \eqref{TargetEq}, the compatibility condition $M(\psi_{+}(t)) = M(\psi_{-}(t))$
	must hold for all $t \ge 0$. 
	For well-posedness, the compatibility condition $M(\psi_{+}(t)) = M(\psi_{-}(t))$ must hold for all $t \ge 0$. By the masses conservation \eqref{PrimeMass}, it is sufficient to require that the initial data satisfy
	\begin{equation}\label{masscompat}
		M(\psi_{+}(0)) = M(\psi_{-}(0)).
	\end{equation}
	To ensure uniqueness of $\phi$, we impose a zero-average condition over $\Omega$.
	
	In this paper, we define the inner product and norm of the complex-valued Hilbert space $\mathbf{L}^2(\Omega)$ and the real-valued Hilbert space $L^2(\Omega)$, respectively, as
	\begin{align}
		& \langle u, v \rangle = \int_{\Omega}u v^* dx \quad \text{and} \quad  \|u\|=\sqrt{\langle u,u \rangle}, \label{inner}\\
		& \left(u, v \right) = \int_{\Omega}u v dx \quad \text{and} \quad  \|u\|=\sqrt{\left( u,u \right)}, \label{inner+}
	\end{align}
	where $v^* $ denotes the complex conjugate of $v$. 
	For any integer $s\ge 0$, we define 
	$$\mathbf{H}^s(\Omega)=\{ u \in \mathbf{L}^2(\Omega):D^{\alpha}u \in \mathbf{L}^2(\Omega), \forall|\alpha| \le s\}$$ as the conventional complex-valued Sobolev space, where $D^{\alpha} u$ represents the weak derivative of $u$ associated with the multi-index $\alpha$.
	This space is equipped with the semi-norm $| u|_s = \sqrt{\sum_{|\alpha| = s} \int_{\Omega} |D^{\alpha} u |^2 dx}$ and the norm $\| u\|_s = \sqrt{\sum_{|\alpha| \le s} \int_{\Omega} |D^{\alpha} u |^2 dx}$. 
	We define the space $\mathbf{H}^1_{\text{per}}(\Omega)$ as the subspace of $\mathbf{H}^1(\Omega)$ consisting of functions that are periodic in each coordinate direction. 
	For real-valued Sobolev spaces, we use $H^s(\Omega)$ and $H^1_{\text{per}}(\Omega)$ with analogous definitions and norms.
	To simplify the notation, we denote 
	\begin{equation*}
		\mathbf{V} := \mathbf{H}^1_{\text{per}}(\Omega), \quad V :=  H^1_{\text{per}}(\Omega) , \quad \mathring{V} := \left\{ v \in V \;\middle|\; \int_{\Omega} v(x) \, dx = 0 \right\}.
	\end{equation*}
	
	\subsection{Variational formulation}
	Consider a general Poisson equation
	\begin{equation}\label{PoissonPeriodic}
		-\Delta a = f \quad \text{in } \Omega,
	\end{equation}
	subject to a periodic boundary condition. The problem \eqref{PoissonPeriodic} is well-posed if and only if the source term $f$ satisfies the compatibility condition 
	\begin{equation}\label{compatibility}
		\int_\Omega f dx = 0.
	\end{equation}
	The variational formulation for \eqref{PoissonPeriodic} seeks $a \in \mathring{V}$ such that
	\begin{equation}\label{PoissonWeak}
		(\nabla a, \nabla w) = (f, w), \quad \forall w \in \mathring{V}.
	\end{equation}
	For any function $w\in V$, the shifted function $\tilde w =w - \int_\Omega w dx \in \mathring{V} $. Hence, under the compatibility condition \eqref{compatibility}, the variational problem is equivalent to finding $a \in \mathring{V}$ such that
	\begin{equation}\label{PoissonWeak+}
		(\nabla a, \nabla w) = (f, w), \quad \forall w \in V.
	\end{equation}
	
	
	
	Note that under initial masses condition \eqref{masscompat} and masses conservation \eqref{PrimeMass}, the source term of the Poisson equation \eqref{TargetEq3} satisfies the compatibility condition \eqref{compatibility}. Therefore, the Poisson equation \eqref{TargetEq3} with the periodic boundary condition is well-posed. Consequently, the variational formulation for problem \eqref{TargetEq} seeks
	$\psi_\pm \in C^1([0,T];\mathbf{V})$, and $\phi \in C([0,T];{\mathring{V}})$ such that
	\begin{subequations} \label{WeakF} 
		\begin{align}
			&\left\langle \mathbf{i} \partial_t \psi_+, v_+ \right\rangle = \frac{1}{2} A_0\left( \psi_+, v_+ \right) + \left \langle g \vert \psi_+ \vert^2 \psi_+ +  G \vert \psi_- \vert^2 \psi_+ +  q \phi \psi_+, v_+ \right\rangle, && \forall v_+ \in \mathbf{V}, \label{WeakF1}\\ 
			&\left\langle \mathbf{i} \partial_t \psi_-, v_- \right\rangle = \frac{1}{2} A_0 \left( \psi_-, v_- \right) + \left\langle g \vert \psi_- \vert^2 \psi_- +  G \vert \psi_+ \vert^2 \psi_-  -  q \phi \psi_-, v_- \right\rangle,&& \forall v_- \in \mathbf{V}, \label{WeakF2}\\
			&A_1 \left( \phi , w \right) = \left( 4 \pi q \left( \vert \psi_+ \vert^2 - \vert \psi_- \vert^2 \right), w \right), && \forall w \in V. \label{WeakF3}    
		\end{align} 
	\end{subequations}
	Here, the bilinear forms $A_0(\cdot,\cdot)$ and $A_1(\cdot ,\cdot )$ are defined as
	\begin{align}
		&A_0(w,v) = \langle \nabla w, \nabla v\rangle , \quad \forall w,v \in \mathbf{V}, \\
		&A_1(w,v) = (\nabla w, \nabla v), \quad \forall w,v \in \mathring{V}.
	\end{align}
	It is straightforward to verify for any $w, v \in \mathbf{V}$,
	\begin{equation}
		A_0(w,w) = |w|_1^2, \quad    A_0(w,v) \le | w |_1 | v |_1, \label{coercivity and boundedness-}
	\end{equation}
	and there exists some constants $\gamma_1, \gamma_2>0$ such that for any $w, v \in \mathring{V}$,
	\begin{equation}
		A_1(w,w) \geq \gamma_1 \|w\|_1^2, \quad  A_1(w,v) \le \gamma_2 \Vert w \Vert_1 \Vert v \Vert_1. \label{coercivity and boundedness}
	\end{equation}
	\subsection{Reformulated GPP model}\label{Equivalent System}
	
	By introducing these auxiliary variables, system \eqref{TargetEq} can be equivalently rewritten as
	\begin{subequations}\label{Equiv TargetEq}
		\begin{align}
			&\mathbf{i}\partial_t \psi_+ = -\frac{1}{2} \nabla ^2 \psi_+ + g Z_+ \psi_+ + G Z_- \psi_+  + q\phi \psi_+,  \label{Equiv TargetEq1} \\
			&\mathbf{i}\partial_t \psi_- = -\frac{1}{2} \nabla ^2 \psi_- + g Z_- \psi_- + G Z_+ \psi_- - q\phi \psi_-, \label{Equiv TargetEq2} \\ 
			&\nabla^2 \phi = -4 \pi q \left(Z_+ - Z_- \right), \\
			& Z_+ = |\psi_+|^2, \\
			& Z_- = |\psi_-|^2.
		\end{align}
	\end{subequations} 
	The variational formulation for the reformulated GPP model \eqref{Equiv TargetEq} is to find $\psi_\pm \in C^1([0,T]; \mathbf{V})$, $\phi\in C([0,T];\mathring{V})$, and $Z_\pm \in  C([0,T];V)$ such that
	\begin{subequations} \label{Equiv WeakF} 
		\begin{align}
			&\langle \mathbf{i} \partial_t \psi_+, v_+ \rangle = \frac{1}{2} A_0\left( \psi_+, v_+ \right) + \langle g Z_+ \psi_+, v_+ \rangle + \langle G Z_- \psi_+, v_+ \rangle + \langle q \phi \psi_+, v_+ \rangle, && \forall v_+ \in \mathbf{V}, \label{Equiv WeakF1}\\ 
			&\langle \mathbf{i} \partial_t \psi_-, v_- \rangle = \frac{1}{2} A_0 \left( \psi_-, v_- \right) + \langle g Z_- \psi_-, v_- \rangle + \langle G Z_+ \psi_-, v_- \rangle - \langle q \phi \psi_-, v_- \rangle,&& \forall v_- \in \mathbf{V}, \label{Equiv WeakF2}\\
			&A_1 \left( \phi , w \right) = \left( 4 \pi q \left( Z_+ - Z_- \right), w \right), && \forall w \in  V, \label{Equiv WeakF3}   \\
			&(Z_+, \chi_+) = (|\psi_+|^2, \chi_+), && \forall \chi_+ \in V, \label{Equiv WeakF4} \\
			&(Z_-, \chi_-) = (|\psi_-|^2, \chi_-), && \forall \chi_- \in V. \label{Equiv WeakF5} 
		\end{align} 
	\end{subequations}
	For this reformulated system, the following invariants are conserved.
	\begin{lemma}
		For any $t>0$, the reformulated GPP model \eqref{Equiv TargetEq} satisfy the masses conservation \eqref{PrimeMass} and the energy conservation:
		\begin{equation}
			E(\psi_{+}(t),\psi_{-}(t),\phi(t),Z_+(t),Z_-(t)) =  E(\psi_{+}(0),\psi_{-}(0),\phi(0),Z_+(0),Z_-(0)),
			\label{ModEnergy}
		\end{equation}
		where the energy
		\begin{equation}\label{energy}
			E(\psi_{+},\psi_{-},\phi,Z_+,Z_-) = \int _{\Omega} \frac{1}{2}  \left(\vert { \nabla \psi_+} \vert ^2 + \vert {\nabla \psi_-} \vert^2 \right) + \frac{g}{2} \left( Z_+ ^2 + Z_- ^2 \right) + G Z_+ Z_- + \frac{1}{8 \pi} \vert {\nabla \phi} \vert ^2 dx.
		\end{equation}
	\end{lemma}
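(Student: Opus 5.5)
The plan is to test the variational formulation \eqref{Equiv WeakF} with carefully chosen functions and take real or imaginary parts. Throughout we assume the solution is smooth enough that $\partial_t Z_\pm$, $\partial_t\phi$ and $\partial_t\psi_\pm$ are admissible test functions (in $V$ or $\mathbf{V}$), and that the mass compatibility \eqref{masscompat} holds, so that \eqref{Equiv WeakF3} is meaningful.

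\textbf{Mass.} Take $v_+=\psi_+$ in \eqref{Equiv WeakF1}. The right-hand side is real: $A_0(\psi_+,\psi_+)=|\psi_+|_1^2$, and $\langle gZ_+\psi_+,\psi_+\rangle$, $\langle GZ_-\psi_+,\psi_+\rangle$, $\langle q\phi\psi_+,\psi_+\rangle$ are all real because $Z_\pm$ and $\phi$ are real-valued. Taking imaginary parts gives $\operatorname{Re}\langle\partial_t\psi_+,\psi_+\rangle=0$, that is, $\frac{d}{dt}\|\psi_+\|^2=0$. The same argument applied to \eqref{Equiv WeakF2} gives conservation of $M(\psi_-)$. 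This yields \eqref{PrimeMass}.

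\textbf{Energy, wave-function equations.} Take $v_\pm=\partial_t\psi_\pm$ in \eqref{Equiv WeakF1}--\eqref{Equiv WeakF2} and take real parts. The left-hand side $\langle \mathbf{i}\partial_t\psi_\pm,\partial_t\psi_\pm\rangle=\mathbf{i}\|\partial_t\psi_\pm\|^2$ is purely imaginary, so its real part vanishes. On the right:
\begin{itemize}
\item $\operatorname{Re}\tfrac12 A_0(\psi_\pm,\partial_t\psi_\pm)=\tfrac14\frac{d}{dt}|\psi_\pm|_1^2$;
\item for real $W$, $\operatorname{Re}\langle W\psi_\pm,\partial_t\psi_\pm\rangle=\tfrac12(W,\partial_t|\psi_\pm|^2)$.
\end{itemize}
Adding the two real-part identities, we obtain
\begin{equation*}
0=\frac14\frac{d}{dt}\big(|\psi_+|_1^2+|\psi_-|_1^2\big)+\frac12\Big(gZ_++GZ_-+q\phi,\ \partial_t|\psi_+|^2\Big)+\frac12\Big(gZ_-+GZ_+-q\phi,\ \partial_t|\psi_-|^2\Big).
\end{equation*}

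\textbf{Energy, auxiliary equations.} We now use \eqref{Equiv WeakF4}--\eqref{Equiv WeakF5}. Since they hold for all $\chi_\pm\in V$ at every time, they give $Z_\pm=|\psi_\pm|^2$ in $L^2$, hence $\partial_t Z_\pm=\partial_t|\psi_\pm|^2$. Formally this amounts to differentiating \eqref{Equiv WeakF4}--\eqref{Equiv WeakF5} in time and testing with $\chi_+=gZ_++GZ_-+q\phi$ and $\chi_-=gZ_-+GZ_+-q\phi$. Replacing $\partial_t|\psi_\pm|^2$ by $\partial_t Z_\pm$, the nonlinear terms combine as
\begin{equation*}
(gZ_+,\partial_tZ_+)+(gZ_-,\partial_tZ_-)+(GZ_-,\partial_tZ_+)+(GZ_+,\partial_tZ_-)=\frac{d}{dt}\int_\Omega \frac g2(Z_+^2+Z_-^2)+GZ_+Z_-\,dx.
\end{equation*}
The remaining potential term is $q(\phi,\partial_t(Z_+-Z_-))$.

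\textbf{Energy, Poisson equation.} To treat the potential term, differentiate \eqref{Equiv WeakF3} in time and take $w=\phi$:
\begin{equation*}
A_1(\partial_t\phi,\phi)=4\pi q\big(\partial_t(Z_+-Z_-),\phi\big).
\end{equation*}
Hence $q(\phi,\partial_t(Z_+-Z_-))=\frac{1}{4\pi}\cdot\frac12\frac{d}{dt}\|\nabla\phi\|^2$. Multiplying the whole identity by $2$, the kinetic part becomes $\frac12\frac{d}{dt}\big(|\psi_+|_1^2+|\psi_-|_1^2\big)$ and the Poisson part becomes $\frac{1}{8\pi}\frac{d}{dt}\|\nabla\phi\|^2$, which match the coefficients in \eqref{energy}. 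Therefore $\frac{d}{dt}E=0$, and integrating in time gives \eqref{ModEnergy}.

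The main obstacle is the Poisson term. One must justify differentiating \eqref{Equiv WeakF3} in time, which requires $\phi\in C^1$. This follows from the Lax--Milgram stability of the Poisson problem, \eqref{coercivity and boundedness}, once $Z_\pm$ is differentiable in time, which in turn follows from $\psi_\pm\in C^1$. One must also check that $\phi\in\mathring V$ is an admissible test function, which holds since $\mathring V\subset V$. The rest is bookkeeping of the factors $\tfrac12$ and $4\pi$.
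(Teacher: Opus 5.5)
Your proposal is correct and follows essentially the same argument as the paper. You test with $\psi_\pm$ for mass and with a multiple of $\partial_t\psi_\pm$ for energy, then differentiate the $Z_\pm$ and Poisson equations in time, test with the matching combinations, and sum. Your deviations (testing with $\partial_t\psi_\pm$ and doubling at the end, folding $\pm q\phi$ into $\chi_\pm$, and taking $w=\phi$ rather than $w=\phi/(4\pi)$) are only cosmetic bookkeeping differences.
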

	\begin{proof}
		Taking $v_+=\psi_+$ and $v_-=\psi_-$ in \eqref{Equiv WeakF1} and \eqref{Equiv WeakF2}, respectively, and comparing their imaginary parts of both equations yield
		\begin{equation} \label{RCGPPEMMAMASS}
			\frac{d}{dt} \int_\Omega |\psi_+|^2 dx = \frac{d}{dt} \int_\Omega |\psi_- |^2 dx =0,
		\end{equation}
		which directly implies the mass conservation property \eqref{PrimeMass}.
		
		Next, taking $v_+=2\partial_t \psi_+,\;v_-=2\partial_t \psi_-$ in \eqref{Equiv WeakF1} and \eqref{Equiv WeakF2}, respectively, the real parts yield
		\begin{subequations}
			\begin{align}
				\text{Re}\left(A_0\left(  \psi_+, \partial_t \psi_+ \right) + 2\langle g Z_+ \psi_+, \partial_t \psi_+ \rangle + 2\langle G Z_- \psi_+, \partial_t \psi_+ \rangle + 2\langle q \phi \psi_+, \partial_t \psi_+ \rangle \right) = 0, \label{PDEenergyF1}\\
				\text{Re}\left(A_0\left( \psi_-, \partial_t \psi_- \right) + 2\langle g Z_- \psi_-, \partial_t \psi_- \rangle + 2\langle G Z_+ \psi_-, \partial_t \psi_- \rangle - 2\langle q \phi \psi_-, \partial_t \psi_- \rangle \right)= 0.    \label{PDEenergyF2}
			\end{align}       
		\end{subequations}
		Taking the time derivative with respect to the first argument in \eqref{Equiv WeakF4} and \eqref{Equiv WeakF5} and setting $\chi_+ = gZ_+ + G Z_-$ and $\chi_- = gZ_- + G Z_+$ give
		\begin{subequations}
			\begin{align}
				&(\partial_t Z_+, gZ_+ + G Z_-) = \text{Re}\left(2\langle g \psi_+, Z_+ \partial_t \psi_+\rangle + 2\langle G\psi_+, Z_- \partial_t \psi_+ \rangle\right), \label{Equiv WeakF4 t} \\
				&(\partial_t Z_-, gZ_- + G Z_+) = \text{Re}\left(2\langle g \psi_-, Z_- \partial_t \psi_-\rangle + 2\langle G\psi_-, Z_+ \partial_t \psi_- \rangle\right).  \label{Equiv WeakF5 t} 
			\end{align}       
		\end{subequations}
		In addition, taking time derivative for the first argument in \eqref{Equiv WeakF3} and setting $w=\frac{\phi}{4\pi}$ give
		\begin{equation}
			\frac{1}{8 \pi} \frac{d}{dt} \int_{\Omega} \vert \nabla \phi \vert^2 dx = \int_{\Omega} q \phi \partial_t \left( \vert {\psi_+} \vert ^2 - \vert {\psi_-} \vert ^2 \right) dx =  \text{Re}\left(2\langle q \phi \psi_+, \partial_t \psi_+ \rangle-2\langle q \phi \psi_-, \partial_t \psi_- \rangle\right). \label{phi Z}
		\end{equation}
		By summing \eqref{PDEenergyF1} and \eqref{PDEenergyF2}, and applying \eqref{Equiv WeakF4 t}, \eqref{Equiv WeakF5 t} and \eqref{phi Z}, it follows
		\begin{align} \label{MediumEnergyF}
			\begin{aligned}
				\frac{d}{dt}E(\psi_{+},\psi_{-},\phi,Z_+,Z_-)  = 0,
			\end{aligned}
		\end{align}
		where $E$ is given in \eqref{energy}. Thus, \eqref{MediumEnergyF} implies the energy conservation \eqref{ModEnergy}.
	\end{proof}
	
	\subsection{Fully discrete scheme}
	In this paper, we discretize the reformulated GPP model \eqref{Equiv TargetEq} using the relaxation Crank–Nicolson finite element method, which combines the relaxation Crank–Nicolson scheme in time with the finite element method in space.
	
	Let $\Omega$ be partitioned into a collection of shape-regular elements $\mathcal{T}_h = \{K\}$, where each element $K$ has mesh size $h_K = \diam(K)$, and define $h = \max_{K \in \mathcal{T}_h} h_K$.
	We introduce the real-valued piecewise polynomial finite element spaces
	\[V_h = \{ v \in C(\Omega) {\cap V}: v \in \mathbb{P}^k(K),\quad \forall K \in \mathcal{T}_h  \},\quad \mathring{V}_h=V_h \cap \mathring{V},\]
	where $\mathbb{P}^k(K)$ denotes the space of real-valued polynomials of degree at most $k$ on $K$, with $k \ge 1$.
	
	The complex-valued finite element space is defined as
	\[
	\mathbf{V}_h = \{ v \in C(\Omega) {\cap \mathbf{V}}: v \in \mathbb{Q}^k(K),\quad \forall K \in \mathcal{T}_h  \},
	\]
	where $\mathbb{Q}^k(K)$ denotes the space of complex-valued polynomials of degree at most $k$ on $K$.
	
	We partition the time interval \([0, T]\) into \(N\) uniform subintervals with time step size \(\tau = {T}/{N}\). The resulting time grid is denoted by \(\{t_n : t_n = n\tau,\; 0 \le n \le N\}\).  
	For \(n \ge 0\) and any function \(v\), let \(v_h^n = v_h(x, t^n) \in V_h\) (or \(\mathbf{V}_h\)) be an approximation of \(v(x, t^n)\), where \(\tau > 0\) is the time step.  
	For simplicity, we introduce the notation
	\begin{equation*}
		D_{\tau}v^{n+1} := \frac{v^{n+1}-v^n}{\tau},\quad \bar v^{n+1/2}:=\frac{v^{n+1}+v^n}{2}.
	\end{equation*}
	
	Subsequently, the relaxation Crank–Nicolson finite element method for the reformulated GPP model \eqref{Equiv TargetEq} is to find $(\psi_{+,h}^{n+1},\psi_{-,h}^{n+1},\phi_h^{n+\frac{1}{2}},Z_{+,h}^{n+\frac{1}{2}},Z_{-,h}^{n+\frac{1}{2}}) \in \mathbf{V}_{h}\times \mathbf{V}_{h}\times \mathring{V}_h\times V_{h}\times V_{h}$ such that 
	\begin{subequations}\label{FullDS}
		\begin{align}
			&\left\langle \mathbf{i} D_{\tau} \psi_{+,h}^{n+1}, v_{+,h} \right\rangle =\frac{1}{2}A_0 \left( \bar\psi_{+,h}^{n+\frac{1}{2}}, v_{+,h} \right)  + \left\langle \left(g Z_{+,h}^{n+\frac{1}{2}} +  G Z_{-,h}^{n+\frac{1}{2}} + q \phi_h^{n+\frac{1}{2}} \right) \bar  \psi_{+,h}^{n+\frac{1}{2}} , v_{+,h} \right\rangle,  \label{FullDS1} \\
			&\left\langle \mathbf{i} D_{\tau} \psi_{-,h}^{n+1}, v_{-,h} \right\rangle = \frac{1}{2}A_0 \left(  \bar\psi_{-,h}^{n+\frac{1}{2}}, v_{-,h} \right) + \left\langle \left( g Z_{-,h}^{n+\frac{1}{2}} + G Z_{+,h}^{n+\frac{1}{2}} - q \phi_h^{n+\frac{1}{2}} \right) \bar \psi_{-,h}^{n+\frac{1}{2}} , v_{-,h} \right\rangle,  \label{FullDS2}\\
			&A_1\left( \phi_h^{n+\frac{1}{2}} , w_h \right) = \left( 4 \pi q \left( Z_{+,h}^{n+\frac{1}{2}} - Z_{-,h}^{n+\frac{1}{2}} \right), w_h \right),  \label{FullDS3} \\ 
			&\left( Z_{+,h}^{n+\frac{1}{2}} + Z_{+,h}^{n-\frac{1}{2}} , \chi_{+,h} \right) = \left( 2 \vert {\psi_{+,h}^n} \vert ^2, \chi_{+,h} \right),  \label{FullDS4} \\
			&\left( Z_{-,h}^{n+\frac{1}{2}} + Z_{-,h}^{n-\frac{1}{2}} , \chi_{-,h} \right) = \left( 2 \vert {\psi_{-,h}^n} \vert ^2, \chi_{-,h} \right),  \label{FullDS5}
		\end{align}
	\end{subequations}
	for any $(v_{+,h},v_{-,h},w_h,\chi_{+,h},\chi_{-,h}) \in \mathbf{V}_{h}\times \mathbf{V}_{h}\times V_{h}\times V_{h}\times V_{h}$. The initial values $\psi_{\pm,h}^{0} \in \mathbf{V}_h$, $Z_{\pm,h}^{-\frac{1}{2}} \in V_h$ are computed by 
	\begin{equation}\label{feminit}
		\langle \psi_{\pm,h}^{0}, v_{\pm,h}\rangle = \langle \psi_{\pm,0}(\mathbf{x}), v_{\pm,h} \rangle, \qquad ( Z_{\pm,h}^{-\frac{1}{2}},\chi_{\pm, h}) = ( |\psi_{\pm,0}(\mathbf{x})|^2, \chi_{\pm, h}),
	\end{equation}
	for any $v_{\pm,h} \in \mathbf{V}_{h}$, and $\chi_{\pm,,h} \in V_h$.
	
	\begin{lemma}\label{lem:ex}
		For any $\tau>0$, given $(\psi_{+,h}^{n},\psi_{-,h}^{n},\phi_h^{n-\frac{1}{2}},Z_{+,h}^{n-\frac{1}{2}},Z_{-,h}^{n-\frac{1}{2}}) \in \mathbf{V}_{h}\times \mathbf{V}_{h}\times \mathring{V}_{h}\times V_{h}\times V_{h}$, and assuming that
		\begin{equation}\label{preassume}
			\int_{\Omega}|\psi_{+,h}^n(\mathbf{x})|^2 - |\psi_{-,h}^n(\mathbf{x})|^2d\mathbf{x}=0, \quad \text{and} \quad \int_{\Omega}Z_{+,h}^{n-\frac{1}{2}} - Z_{-,h}^{n-\frac{1}{2}}d\mathbf{x}=0
		\end{equation}
		the relaxation Crank--Nicolson finite element scheme \eqref{FullDS} admits a unique solution 
		\begin{equation}
			(\psi_{+,h}^{n+1},\psi_{-,h}^{n+1},\phi_h^{n+\frac{1}{2}},Z_{+,h}^{n+\frac{1}{2}},Z_{-,h}^{n+\frac{1}{2}}) \in \mathbf{V}_{h}\times \mathbf{V}_{h}\times \mathring{V}_{h}\times V_{h}\times V_{h}. 
		\end{equation}
	\end{lemma}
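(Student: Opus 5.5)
The scheme \eqref{FullDS} is solved sequentially: first $Z_{\pm,h}^{n+\frac12}$, then $\phi_h^{n+\frac12}$, then $\psi_{\pm,h}^{n+1}$. Each step is a finite-dimensional linear problem, so existence follows from uniqueness. I will show that each homogeneous problem has only the trivial solution, and check the Poisson compatibility condition along the way.

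\emph{Step 1 (auxiliary variables).} Equations \eqref{FullDS4}--\eqref{FullDS5} say that $Z_{\pm,h}^{n+\frac12}$ is the $L^2$-projection onto $V_h$ of $2|\psi_{\pm,h}^n|^2 - Z_{\pm,h}^{n-\frac12}$. The mass matrix on $V_h$ is symmetric positive definite, so $Z_{\pm,h}^{n+\frac12}\in V_h$ exists and is unique.

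\emph{Step 2 (compatibility and the Poisson problem).} Take $\chi_{\pm,h}=1\in V_h$ in \eqref{FullDS4}--\eqref{FullDS5} and subtract the two relations. This gives
\[
\int_\Omega \bigl(Z_{+,h}^{n+\frac12}-Z_{-,h}^{n+\frac12}\bigr)\,d\mathbf{x}
= 2\int_\Omega\bigl(|\psi_{+,h}^n|^2-|\psi_{-,h}^n|^2\bigr)\,d\mathbf{x}
-\int_\Omega\bigl(Z_{+,h}^{n-\frac12}-Z_{-,h}^{n-\frac12}\bigr)\,d\mathbf{x}=0,
\]
using both parts of \eqref{preassume}. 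So the right-hand side of \eqref{FullDS3} is compatible.

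Next, by coercivity \eqref{coercivity and boundedness} of $A_1$ on $\mathring V_h\subset\mathring V$, Lax--Milgram gives a unique $\phi_h^{n+\frac12}\in\mathring V_h$ satisfying \eqref{FullDS3} for all test functions $w_h\in\mathring V_h$. To extend this to all $w_h\in V_h$, split $w_h=\tilde w_h+c$ with $\tilde w_h\in\mathring V_h$ and $c$ a constant; constants lie in $V_h$. The constant part gives $A_1(\phi_h^{n+\frac12},c)=0$, and the right-hand side with $w_h=c$ is $4\pi q\,c\int_\Omega(Z_{+,h}^{n+\frac12}-Z_{-,h}^{n+\frac12})\,d\mathbf{x}=0$ by the compatibility just shown. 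This mirrors the argument giving \eqref{PoissonWeak+}.

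\emph{Step 3 (the wave functions).} With $Z_{\pm,h}^{n+\frac12}$ and $\phi_h^{n+\frac12}$ known, \eqref{FullDS1} and \eqref{FullDS2} are decoupled linear problems for $\psi_{\pm,h}^{n+1}$. Each has a real-valued potential $W_\pm := gZ_{\pm,h}^{n+\frac12}+GZ_{\mp,h}^{n+\frac12}\pm q\phi_h^{n+\frac12}$.

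For uniqueness, let $e$ be the difference of two solutions of \eqref{FullDS1}. Then for all $v\in\mathbf V_h$,
\[
\frac{2\mathbf{i}}{\tau}\langle e,v\rangle=\frac14 A_0(e,v)+\frac12\langle W_+ e,v\rangle .
\]
Take $v=e$. The terms $A_0(e,e)=|e|_1^2$ and $\langle W_+e,e\rangle=\int_\Omega W_+|e|^2\,d\mathbf{x}$ are both real because $W_+$ is real, while the left-hand side is purely imaginary. Comparing imaginary parts gives $\frac{2}{\tau}\|e\|^2=0$, so $e=0$.

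The square linear system on $\mathbf V_h$ is therefore invertible, giving existence and uniqueness of $\psi_{+,h}^{n+1}$; the same argument gives $\psi_{-,h}^{n+1}$. This holds for any $\tau>0$.

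The only delicate point is the compatibility in Step 2. That is exactly where the hypothesis \eqref{preassume} is needed: without it the Poisson equation could not be solved against all test functions in $V_h$.
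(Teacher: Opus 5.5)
Your proof is correct and follows essentially the paper's route: compatibility by testing \eqref{FullDS4}--\eqref{FullDS5} with $\chi_{\pm,h}=1$, then uniqueness of $Z_{\pm,h}^{n+\frac12}$, $\phi_h^{n+\frac12}$, $\psi_{\pm,h}^{n+1}$ in turn via the $L^2$ test, coercivity of $A_1$, and comparison of imaginary parts. Your explicit extension of \eqref{FullDS3} from $\mathring V_h$ to all of $V_h$ makes precise a step the paper leaves implicit in its ``existence equals uniqueness'' remark, since the trial space $\mathring V_h$ has one dimension fewer than the test space $V_h$; one harmless slip is that the difference equation should read $\frac{\mathbf{i}}{\tau}\langle e,v\rangle=\frac14 A_0(e,v)+\frac12\langle W_+e,v\rangle$ rather than having $\frac{2\mathbf{i}}{\tau}$, which does not affect the conclusion.
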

	\begin{proof}
		Under the the assumptions in \eqref{preassume}, the compatibility condition for equation \eqref{FullDS3} is satisfied, namely,
		\begin{equation} \label{compability condition on n+}
			\int_{\Omega}Z_{+,h}^{n+\frac{1}{2}} - Z_{-,h}^{n+\frac{1}{2}}d\mathbf{x}=0,
		\end{equation}
		which is obtained by testing \eqref{FullDS5} and \eqref{FullDS4} with $\chi_{+,h}=\chi_{-,h}=1$, respectively, and subtracting \eqref{FullDS5} from \eqref{FullDS4}.
		
		Since the scheme \eqref{FullDS} is a finite-dimensional algebraic system, the existence of a solution is equivalent to its uniqueness. Therefore, it suffices to prove the uniqueness. Suppose there exists another solution $(\tilde\psi_{+,h}^{n+1},\tilde\psi_{-,h}^{n+1},\tilde\phi_h^{n+\frac{1}{2}},\tilde Z_{+,h}^{n+\frac{1}{2}},\tilde Z_{-,h}^{n+\frac{1}{2}})$, and let $(\delta\psi_{+,h}^{n+1},\delta\psi_{-,h}^{n+1},\delta\phi_h^{n+\frac{1}{2}},\delta Z_{+,h}^{n+\frac{1}{2}},\delta Z_{-,h}^{n+\frac{1}{2}})$ be the difference between the two solutions, i.e., $\delta f = f - \tilde f$. Then it satisfies:   
		\begin{subequations}
			\begin{align}
				&\frac{1}{\tau}\left\langle \mathbf{i} \delta \psi_{+,h}^{n+1}, v_{+,h} \right\rangle = \frac{1}{4}A_0\left( \delta \psi_{+,h}^{n+1}, v_{+,h} \right) + \frac{1}{2}\left\langle \left( g Z_{+,h}^{n+\frac{1}{2}} + G Z_{-,h}^{n+\frac{1}{2}} + q \phi_h^{n+\frac{1}{2}}\right) \delta  \psi_{+,h}^{n+1} , v_{+,h} \right\rangle,  \label{DFullDS1} \\
				&\frac{1}{\tau}\left\langle \mathbf{i} \delta \psi_{-,h}^{n+1}, v_{-,h} \right\rangle = \frac{1}{4} A_0 \left( \delta \psi_{-,h}^{n+1}, v_{-,h} \right) + \frac{1}{2}\left\langle \left( g Z_{-,h}^{n+\frac{1}{2}} +G Z_{+,h}^{n+\frac{1}{2}} - q \phi_h^{n+\frac{1}{2}}\right) \delta  \psi_{-,h}^{n+1} , v_{-,h} \right\rangle,  \label{DFullDS2}\\
				&A_1\left( \delta \phi_h^{n+\frac{1}{2}} , w_h \right) = \left( 4 \pi q \left( \delta Z_{+,h}^{n+\frac{1}{2}} - \delta Z_{-,h}^{n+\frac{1}{2}} \right), w_h \right),  \label{DFullDS3} \\ 
				&\left( \delta Z_{+,h}^{n+\frac{1}{2}} , \chi_{+,h} \right) = 0,  \label{DFullDS4} \\
				&\left( \delta Z_{-,h}^{n+\frac{1}{2}} , \chi_{-,h} \right) = 0.  \label{DFullDS5}
			\end{align} 
		\end{subequations}
		Taking $\chi_{\pm,h} = \delta Z_{\pm,h}^{n+\frac{1}{2}}$ separately in \eqref{DFullDS4} and \eqref{DFullDS5} implies $\Vert \delta Z_{\pm,h}^{n+\frac{1}{2}} \Vert = 0$, namely $\delta Z_{\pm,h}^{n+\frac{1}{2}}=0$. Then \eqref{DFullDS3} becomes 
		\begin{equation}
			A_1\left( \delta \phi_h^{n+\frac{1}{2}} , w_h \right) = 0. \label{DFullDS3_1}
		\end{equation}
		By taking $w_h = \delta \phi_h^{n+\frac{1}{2}}$ in \eqref{DFullDS3_1} and applying \eqref{coercivity and boundedness}, we have $\Vert \delta \phi_h^{n+\frac{1}{2}} \Vert_1 \le 0$, which implies $\delta \phi_h^{n+\frac{1}{2}} = 0$. 
		Finally, we take $v_{+,h} = \tau \delta \psi_{+,h}^{n+1}$ in \eqref{DFullDS1} to get
		\begin{equation}
			\mathbf{i} \Vert \delta \psi_{+,h}^{n+1} \Vert^2  = \frac{\tau}{4}A_0 \left( \delta \psi_{+,h}^{n+1}, \delta \psi_{+,h}^{n+1}  \right)+ \frac{\tau}{2} \langle (g Z_{+,h}^{n+\frac{1}{2}} + G Z_{-,h}^{n+\frac{1}{2}} + q \phi_h^{n+\frac{1}{2}} ) \delta \psi_{+,h}^{n+1} , \delta \psi_{+,h}^{n+1} \rangle. \label{DFullDS1_1}
		\end{equation}
		Note that the left side of the equation \eqref{DFullDS1_1} is purely imaginary, while the right side is a real number by \eqref{inner}, which implies that $\Vert \delta \psi_{+,h}^{n+1} \Vert = 0$, namely $\delta \psi_{+,h}^{n+1}=0$. Similarly, $\delta \psi_{-,h}^{n+1}=0$. Thus, the proof is complete.
	\end{proof}
	We postpone the discussion of the assumptions in \eqref{preassume} until after the following conservation result.
	\begin{lemma} \label{lemma: Des Mass and Energy}
		The relaxation Crank--Nicolson finite element scheme \eqref{FullDS} satisfies the discrete masses conservation law 
		\begin{equation} \label{full Des Mass}
			M_{\pm,h}^{n} = M_{\pm,h}^0,
		\end{equation}  
		and the discrete energy conservation law
		\begin{equation} \label{full Des Energy}
			E_h^{n} = E_h^0,
		\end{equation}    
		where the masses
		\begin{align*}
			&M_{\pm,h}^{n} = \int _{\Omega} \vert {\psi_{\pm,h}^{n}} \vert ^2 dx,
		\end{align*}
		and the discrete energy
		\begin{align*}
			E_{h}^{n} =& \frac{1}{2} \int _{\Omega} \vert {\nabla \psi_{+,h}^{n}} \vert ^2 + \vert {\nabla \psi_{-,h}^{n}} \vert ^2 dx + \frac{1}{2} \int _{\Omega} g\left( Z_{+,h}^{n+\frac{1}{2}}  Z_{+,h}^{n-\frac{1}{2}} + Z_{-,h}^{n+\frac{1}{2}}  Z_{-,h}^{n-\frac{1}{2}} \right) dx \\
			& + \frac{1}{2} \int _{\Omega} G\left( Z_{-,h}^{n+\frac{1}{2}}  Z_{+,h}^{n-\frac{1}{2}} + Z_{+,h}^{n+\frac{1}{2}}  Z_{-,h}^{n-\frac{1}{2}} \right) dx + \frac{1}{8\pi} \int _{\Omega} {\nabla \phi_{h}^{n-\frac{1}{2}}} {\nabla \phi_{h}^{n+\frac{1}{2}}} dx.
		\end{align*}
	\end{lemma}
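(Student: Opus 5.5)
Both conservation laws come from testing \eqref{FullDS} with the discrete analogues of the test functions used in the continuous proof. For each law we show that the relevant quantity at level $n+1$ equals the one at level $n$, and then induct on $n$.

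\emph{Mass.} Take $v_{+,h}=\tau\bar\psi_{+,h}^{n+\frac{1}{2}}$ in \eqref{FullDS1}. The left-hand side is
\[
\mathbf{i}\bigl\langle \psi_{+,h}^{n+1}-\psi_{+,h}^{n},\tfrac12(\psi_{+,h}^{n+1}+\psi_{+,h}^{n})\bigr\rangle
=\tfrac{\mathbf{i}}{2}\bigl(\|\psi_{+,h}^{n+1}\|^2-\|\psi_{+,h}^{n}\|^2\bigr)+\text{(purely imaginary times } \mathbf{i}\text{, i.e.\ real)}.
\]
The cross terms give $\tfrac{\mathbf{i}}{2}\bigl(\langle\psi^{n+1},\psi^{n}\rangle-\langle\psi^{n},\psi^{n+1}\rangle\bigr)$, which is $\mathbf{i}$ times a purely imaginary number and hence real.

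On the right-hand side, $\frac12 A_0(\bar\psi,\bar\psi)$ is real. The potential $gZ_{+,h}^{n+\frac{1}{2}}+GZ_{-,h}^{n+\frac{1}{2}}+q\phi_h^{n+\frac{1}{2}}$ is real-valued, so the nonlinear term is real as well. Taking imaginary parts therefore gives $M_{+,h}^{n+1}=M_{+,h}^n$. The same argument applied to \eqref{FullDS2} gives $M_{-,h}^{n+1}=M_{-,h}^n$. Induction yields \eqref{full Des Mass}.

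This also settles the assumption \eqref{preassume} at every step. The first condition follows from mass conservation together with the analogue of \eqref{masscompat} for the projected initial data. The second follows from \eqref{compability condition on n+}.

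\emph{Energy.} Take $v_{\pm,h}=2(\psi_{\pm,h}^{n+1}-\psi_{\pm,h}^n)=2\tau D_\tau\psi_{\pm,h}^{n+1}$ in \eqref{FullDS1}--\eqref{FullDS2} and take real parts; the left-hand side then vanishes, since it is $\mathbf{i}\cdot 2\tau\|D_\tau\psi\|^2$. The gradient term gives
\[
\mathrm{Re}\,A_0\bigl(\psi^{n+1}+\psi^n,\psi^{n+1}-\psi^n\bigr)=|\psi^{n+1}|_1^2-|\psi^n|_1^2 .
\]
The nonlinear term gives
\[
\bigl(W_\pm^{n+\frac{1}{2}},\,|\psi_{\pm,h}^{n+1}|^2-|\psi_{\pm,h}^{n}|^2\bigr),
\]
where $W_+=gZ_++GZ_-+q\phi$ and $W_-=gZ_-+GZ_+-q\phi$. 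Here we use the pointwise identity $\mathrm{Re}\bigl[(a+b)(a-b)^*\bigr]=|a|^2-|b|^2$.

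The key step is to rewrite $|\psi^{n+1}|^2-|\psi^n|^2$ using the relaxation equations. Subtracting \eqref{FullDS4} at level $n$ from \eqref{FullDS4} at level $n+1$ gives
\[
\bigl(Z^{n+\frac32}-Z^{n-\frac12},\chi\bigr)=2\bigl(|\psi^{n+1}|^2-|\psi^n|^2,\chi\bigr)\quad\text{for all }\chi\in V_h .
\]
Since $W_\pm^{n+\frac{1}{2}}\in V_h$, it is an admissible test function. This exact $L^2$ projection identity is the crucial point, and it is the reason the auxiliary variables live in $V_h$. It converts the $g$ and $G$ contributions into
\[
\tfrac g2\bigl(Z_+^{n+\frac12},Z_+^{n+\frac32}-Z_+^{n-\frac12}\bigr)
\]
plus the analogous $Z_-$ term, and
\[
\tfrac G2\Bigl[\bigl(Z_-^{n+\frac12},Z_+^{n+\frac32}-Z_+^{n-\frac12}\bigr)+\bigl(Z_+^{n+\frac12},Z_-^{n+\frac32}-Z_-^{n-\frac12}\bigr)\Bigr].
\]
These telescope exactly into the differences of the $g$ and $G$ parts of $E_h^{n+1}-E_h^n$. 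For the $G$ part, regroup the four products as $Z_-^{n+\frac32}Z_+^{n+\frac12}+Z_+^{n+\frac32}Z_-^{n+\frac12}$ minus the same expression shifted down one level.

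For the $\phi$ terms, take the difference of \eqref{FullDS3} at levels $n+\frac32$ and $n-\frac12$, with $w_h=\phi_h^{n+\frac12}/(4\pi)$. This gives
\[
\tfrac1{4\pi}\bigl(\nabla\phi^{n+\frac12},\nabla(\phi^{n+\frac32}-\phi^{n-\frac12})\bigr)=q\bigl(Z_+^{n+\frac32}-Z_+^{n-\frac12}-Z_-^{n+\frac32}+Z_-^{n-\frac12},\phi^{n+\frac12}\bigr).
\]
By the same projection identity, the right-hand side equals $2q\bigl(|\psi_+^{n+1}|^2-|\psi_+^{n}|^2-|\psi_-^{n+1}|^2+|\psi_-^{n}|^2,\phi^{n+\frac12}\bigr)$. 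Halving matches the $\pm q\phi$ contributions and produces the telescoping difference of $\frac1{8\pi}\int\nabla\phi^{n-\frac12}\cdot\nabla\phi^{n+\frac12}$.

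Summing all contributions gives $2(E_h^{n+1}-E_h^n)=0$.

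\emph{Main obstacle.} The main difficulty is bookkeeping of the staggered indices. Specifically, we must check that the factor $2$ in \eqref{FullDS4}--\eqref{FullDS5} produces exactly the $\frac12$ weights in $E_h^n$, and that the $\phi$ and $G$ cross terms pair correctly. Two further points need care. At $n=0$ the level $\phi_h^{-\frac12}$ is used; we take it to be defined by \eqref{FullDS3} with $Z^{-\frac12}$. Finally, we must verify that the test functions $W_\pm^{n+\frac{1}{2}}\in V_h$ are admissible, which holds because $\mathring V_h\subset V_h$.
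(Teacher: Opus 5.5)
Your proposal is correct and is essentially the paper's proof. It uses the same test functions (up to the harmless scalings by $\tau$ and $2\tau$), the same differencing of \eqref{FullDS4}--\eqref{FullDS5} between consecutive levels tested with $gZ_{\pm,h}^{n+\frac{1}{2}}+GZ_{\mp,h}^{n+\frac{1}{2}}$, and the same differencing of \eqref{FullDS3} at levels $n+\frac{3}{2}$ and $n-\frac{1}{2}$ tested with $\phi_h^{n+\frac{1}{2}}$.

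Two minor remarks. With your test function $2(\psi_{\pm,h}^{n+1}-\psi_{\pm,h}^{n})$, the summed real parts equal $E_h^{n+1}-E_h^{n}$ itself rather than $2(E_h^{n+1}-E_h^{n})$, which is harmless. Your convention that $\phi_h^{-\frac{1}{2}}$ is defined by \eqref{FullDS3} with $Z_{\pm,h}^{-\frac{1}{2}}$ is exactly what the step $n=0$ requires, a point the paper leaves implicit.
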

	\begin{proof}
		By taking $v_{+,h} = \bar\psi_{+,h}^{n+\frac{1}{2}}$ and $v_{-,h} = \bar\psi_{-,h}^{n+\frac{1}{2}}$ in \eqref{FullDS1} and \eqref{FullDS2}, respectively, and comparing their imaginary parts, we obtain 
		\begin{equation} \label{DMASS des}
			D_\tau \int_\Omega |\psi_{+,h}^{n+1}|^2 dx =D_\tau \int
			_\Omega | \psi_{-,h}^{n+1}|^2 dx =0,
		\end{equation} 
		for $n=0,1,2,...$, which directly implies the discretized masses conservation  \eqref{full Des Mass}. 
		
		Taking $v_{+,h} = D_\tau \psi_{+,h}^{n+1}$ and 
		$v_{-,h} = D_\tau \psi_{-,h}^{n+1}$ in \eqref{FullDS1} and \eqref{FullDS2}, respectively, and comparing their real parts give
		\begin{align}
			\frac{1}{4\tau} \int _{\Omega} \vert {\nabla \psi_{+,h}^{n+1}} \vert ^2 - \vert {\nabla \psi_{+,h}^{n}} \vert ^2 dx + \frac{1}{2\tau} \int _{\Omega} \left(g Z_{+,h}^{n+\frac{1}{2}} + G Z_{-,h}^{n+\frac{1}{2}} + q \phi_h^{n+\frac{1}{2}}\right) \left( \vert { \psi_{+,h}^{n+1}} \vert ^2 - \vert { \psi_{+,h}^{n}} \vert ^2 \right) dx = 0, \label{descrationpart1}
			\\
			\frac{1}{4\tau} \int _{\Omega} \vert {\nabla \psi_{-,h}^{n+1}} \vert ^2 - \vert {\nabla \psi_{-,h}^{n}} \vert ^2 dx + \frac{1}{2\tau} \int _{\Omega} \left(g Z_{-,h}^{n+\frac{1}{2}} + G Z_{+,h}^{n+\frac{1}{2}} - q \phi_h^{n+\frac{1}{2}}\right) \left( \vert { \psi_{-,h}^{n+1}} \vert ^2 - \vert { \psi_{-,h}^{n}} \vert ^2 \right) dx = 0. \label{descrationpart2}
		\end{align}
		Summing \eqref{descrationpart1} and \eqref{descrationpart2} gives
		\begin{equation}\label{Dsum}
			D_1^n + D_2^n + D_3^n = 0,
		\end{equation}
		where
		\begin{align*}
			&D_1^n = \frac{1}{4\tau} \int _{\Omega} \vert {\nabla \psi_{+,h}^{n+1}} \vert ^2 - \vert {\nabla \psi_{+,h}^{n}} \vert ^2 + \vert {\nabla \psi_{-,h}^{n+1}} \vert ^2 - \vert {\nabla \psi_{-,h}^{n}} \vert ^2 dx, \\ &
			D_2^n = \frac{1}{2\tau} \int _{\Omega} \left(g Z_{+,h}^{n+\frac{1}{2}} + G Z_{-,h}^{n+\frac{1}{2}} \right) \left( \vert { \psi_{+,h}^{n+1}} \vert ^2 - \vert { \psi_{+,h}^{n}} \vert ^2 \right) + \left(g Z_{-,h}^{n+\frac{1}{2}} + G Z_{+,h}^{n+\frac{1}{2}} \right) \left( \vert { \psi_{-,h}^{n+1}} \vert ^2 - \vert { \psi_{-,h}^{n}} \vert ^2 \right) dx, \\ &
			D_3^n = \frac{1}{2\tau} \int _{\Omega} q \phi_h^{n+\frac{1}{2}} \left( \vert { \psi_{+,h}^{n+1}} \vert ^2 - \vert { \psi_{+,h}^{n}} \vert ^2 \right) - q \phi_h^{n+\frac{1}{2}} \left( \vert { \psi_{-,h}^{n+1}} \vert ^2 - \vert { \psi_{-,h}^{n}} \vert ^2 \right) dx.
		\end{align*}
		
		We now proceed to verify that
		\begin{equation}
			D_\tau E_h^{n+1} = 0.
		\end{equation}
		We begin by simplifying $D_2^n$ using \eqref{FullDS4} and \eqref{FullDS5}. Specifically, we subtract the $n$-th layer of \eqref{FullDS4} from its $(n+1)$-th layer, and perform an analogous subtraction for \eqref{FullDS5}. This yields 
		\begin{align}
			&\left( Z_{+,h}^{n+\frac{3}{2}} - Z_{+,h}^{n-\frac{1}{2}} , \chi_{+,h} \right) = \left( 2 \vert {\psi_{+,h}^{n+1}} \vert ^2-2 \vert {\psi_{+,h}^n} \vert ^2 , \chi_{+,h} \right),  \label{D FullDS4} \\
			&\left( Z_{-,h}^{n+\frac{3}{2}} - Z_{-,h}^{n-\frac{1}{2}} , \chi_{-,h} \right) = \left( 2 \vert {\psi_{-,h}^{n+1}} \vert ^2 - 2 \vert {\psi_{-,h}^n} \vert ^2, \chi_{-,h} \right).  \label{D FullDS5}
		\end{align}
		Taking $\chi_{+,h} = g Z_{+,h}^{n+\frac{1}{2}} + G Z_{-,h}^{n+\frac{1}{2}}$ and $\chi_{-,h} = g Z_{-,h}^{n+\frac{1}{2}} + G Z_{+,h}^{n+\frac{1}{2}}$ in \eqref{D FullDS4} and \eqref{D FullDS5}, respectively, and then substituting the resulting expressions into $D_2^n$ yield
		\begin{align}
			\begin{aligned}
				2\tau D_2^n &= \frac{1}{2}\int _{\Omega} \left(g Z_{+,h}^{n+\frac{1}{2}} + G Z_{-,h}^{n+\frac{1}{2}}\right) \left( Z_{+,h}^{n+\frac{3}{2}} - Z_{+,h}^{n-\frac{1}{2}} \right)  +  \left(g Z_{-,h}^{n+\frac{1}{2}} + G Z_{+,h}^{n+\frac{1}{2}} \right) \left( Z_{-,h}^{n+\frac{3}{2}} - Z_{-,h}^{n-\frac{1}{2}} \right) dx \\ 
				= & \frac{1}{2}\int _{\Omega} g\left(\left( Z_{+,h}^{n+\frac{1}{2}}Z_{+,h}^{n+\frac{3}{2}} +Z_{-,h}^{n+\frac{1}{2}} Z_{-,h}^{n+\frac{3}{2}} \right) - \left( Z_{+,h}^{n+\frac{1}{2}} Z_{+,h}^{n-\frac{1}{2}} + Z_{-,h}^{n+\frac{1}{2}} Z_{-,h}^{n-\frac{1}{2}} \right) \right) dx \\ 
				& + \frac{1}{2}\int _{\Omega} G\left(\left( Z_{-,h}^{n+\frac{1}{2}}Z_{+,h}^{n+\frac{3}{2}} +Z_{+,h}^{n+\frac{1}{2}} Z_{-,h}^{n+\frac{3}{2}} \right) - \left( Z_{-,h}^{n+\frac{1}{2}} Z_{+,h}^{n-\frac{1}{2}} + Z_{+,h}^{n+\frac{1}{2}} Z_{-,h}^{n-\frac{1}{2}} \right) \right) dx.
			\end{aligned} \label{D2^n}
		\end{align}
		For the term $D_3^n$, we consider the $(n+1)$-th and $(n-1)$-th layers of \eqref{FullDS3}. By taking 
		$w_h = \phi_h^{n+\frac{1}{2}}$, we have
		\begin{align}
			&\int_{\Omega} \nabla \phi_h^{n+\frac{3}{2}} \cdot \nabla \phi_h^{n+\frac{1}{2}} dx = 4 \pi \int_{\Omega} q \left( Z_{+,h}^{n+\frac{3}{2}} - Z_{-,h}^{n+\frac{3}{2}}\right) \phi_h^{n+\frac{1}{2}} dx, \label{phiphi1}
			\\
			&\int_{\Omega} \nabla \phi_h^{n-\frac{1}{2}} \cdot \nabla \phi_h^{n+\frac{1}{2}} dx = 4 \pi \int_{\Omega} q \left( Z_{+,h}^{n-\frac{1}{2}} - Z_{-,h}^{n-\frac{1}{2}}\right) \phi_h^{n+\frac{1}{2}} dx. \label{phiphi2}
		\end{align}
		Subtracting \eqref{phiphi2} from \eqref{phiphi1} and performing algebraic transformations yield
		\begin{align}
			\begin{aligned}
				&\frac{1}{8\pi}\int_{\Omega} \nabla \phi_h^{n+\frac{3}{2}} \cdot \nabla \phi_h^{n+\frac{1}{2}} - \nabla \phi_h^{n-\frac{1}{2}} \cdot \nabla \phi_h^{n+\frac{1}{2}} dx \\ &= \frac{1}{2} \int_{\Omega} q \phi_h^{n+\frac{1}{2}} \left( \left( Z_{+,h}^{n+\frac{3}{2}} - Z_{+,h}^{n-\frac{1}{2}} \right) - \left( Z_{-,h}^{n+\frac{3}{2}} - Z_{-,h}^{n-\frac{1}{2}}\right) \right) dx  \\
				&= \int_{\Omega} q \phi_h^{n+\frac{1}{2}} \left( \left( \vert { \psi_{+,h}^{n+1}} \vert ^2 - \vert { \psi_{+,h}^{n}} \vert ^2 \right) - \left( \vert { \psi_{-,h}^{n+1}} \vert ^2 - \vert { \psi_{-,h}^{n}} \vert ^2 \right) \right) dx = 2\tau D_3^n,
			\end{aligned} \label{phi and Z}
		\end{align}
		where in the second equality, we have used \eqref{D FullDS4} and \eqref{D FullDS5} with $\chi_{+,h}= \chi_{-,h} =  q \phi_h^{n+\frac{1}{2}}$.
		
		Finally, summing the expressions for $2\tau D_1^n$, $2\tau D_2^n$ from \eqref{D2^n}, and $2\tau D_3^n$ from \eqref{phi and Z}, together with \eqref{Dsum}, yields
		\begin{equation}
			D_{\tau} E_h^{n+1} = 2(D_1^n + D_2^n + D_3^n) = 0,
		\end{equation}
		which completes the proof.
	\end{proof}
	
	To this end, we revisit the assumptions in \eqref{preassume}. 
	\begin{remark}
		For $n = 0$, by taking $\chi_{\pm,h} = 1$ in \eqref{feminit} and using the assumption of equal initial masses \eqref{masscompat}, it is straightforward to verify that the second equality in the assumptions of \eqref{preassume} holds, namely,
		\[
		\int_{\Omega} \left( Z_{+,h}^{-\frac{1}{2}} - Z_{-,h}^{-\frac{1}{2}} \right) \, d\mathbf{x} = 0.
		\]  
	\end{remark}
	\subsection{Implementation details}
	We finally state the algorithm for the relaxation Crank--Nicolson finite element scheme \eqref{FullDS} in Algorithm \ref{alg:CNFEM}.
	
	\begin{algorithm}[ht]
		\caption{The relaxation Crank--Nicolson finite element scheme for the CGPP system}
		\label{alg:CNFEM}
		\KwIn{Initial data $\psi_{\pm,h}^{0}$}
		\KwOut{Wave function $\psi_{\pm,h}^{N}$ and $\phi_{h}^{N-\frac{1}{2}}$}  
		Initialization for $Z_{\pm,h}^{-\frac{1}{2}}$ as formulated in \eqref{feminit}.\\
		\For{$n=0$ \KwTo $\lfloor T/\Delta t \rfloor$}{
			Compute $Z_{+,h}^{n+\frac{1}{2}}$ from \eqref{FullDS4}, and $Z_{-,h}^{n+\frac{1}{2}}$ from \eqref{FullDS5} in parallel; \\
			Compute $\phi_h^{n+\frac{1}{2}}$ from \eqref{FullDS3};\\
			Compute \( \psi_{+,h}^{n+1} \) from \eqref{FullDS1}, and \( \psi_{-,h}^{n+1} \) from \eqref{FullDS2} in parallel.\\
		}
	\end{algorithm}
	
	The equations in Algorithm \ref{alg:CNFEM} are fully decoupled and linear, and therefore require no iterations. Moreover, the scheme advances sequentially without the need to solve a coupled system.

	\section{Error estimates for the fully discrete system} \label{secErrorEstimate}
	We define the nodal interpolation operator $\Pi_h:V \rightarrow V_h$ as $(\Pi_h u)(\mathbf{x}_i) = u(\mathbf{x}_i)$ with predefined nodes $\{\mathbf{x}_i\}_{i=1}^N$. The Ritz projection operator $R_h:\mathring{V} \rightarrow \mathring{V}_h$ is defined by the following problem: for any $v \in \mathring{V}$, find $R_h v \in \mathring{V}_h$ such that
	\begin{equation}\label{Galerkin orthgonal}
		A_1(R_hv- v, w) = 0,\quad \forall w \in V_h.
	\end{equation} 
	The following lemmas describe the properties of the Ritz projection operator $R_h$.
	\begin{lemma}
		The operators $\Pi_h$ and $R_h$ satisfy the following properties 
		\begin{align}
			&\Vert v - \Pi_h v \Vert + h\Vert \nabla(v - \Pi_h v) \Vert + h \Vert v - \Pi_h v \Vert_{\infty} \le Ch^{k+1} \Vert v \Vert_{k+1},\quad \forall v \in H^{k+1}(\Omega), \label{Interpolation error}\\ 
			&\Vert v - R_h v \Vert + h\Vert \nabla(v - R_h v) \Vert \le Ch^{k+1} \Vert v \Vert_{k+1},\quad \forall v \in \mathring{V} \cap H^{k+1}(\Omega). \label{Ritz projection error}
		\end{align}
	\end{lemma}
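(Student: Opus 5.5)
These are standard approximation results, and I would prove both estimates by the classical route: the Bramble--Hilbert lemma applied on a reference element for the interpolant, and C\'ea's lemma combined with an Aubin--Nitsche duality argument for the Ritz projection. Throughout, I would use the fact that periodic boundary conditions simply identify opposite faces of $\Omega$. The Poisson problem on the periodic cell therefore enjoys full elliptic regularity, $\|u\|_2 \le C\|f\|$ for $-\Delta u = f$ with zero mean, with no corner or boundary-layer issues.

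\emph{Interpolation estimate.} Fix $K\in\mathcal{T}_h$ and map it affinely to a reference element $\hat K$. On $\hat K$ the map $\hat v\mapsto \hat v - \hat\Pi\hat v$ is bounded from $H^{k+1}(\hat K)$ to $H^m(\hat K)$ and to $L^\infty(\hat K)$; this uses the embedding $H^{k+1}\hookrightarrow C^0$, valid since $k+1\ge 2 > d/2$ for $d\le 3$. The map also vanishes on $\mathbb{P}^k$. Bramble--Hilbert then gives $\|\hat v-\hat\Pi\hat v\|_{m,\hat K}+\|\hat v-\hat\Pi\hat v\|_{\infty,\hat K}\le C|\hat v|_{k+1,\hat K}$. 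Scaling back by shape regularity yields $|v-\Pi_h v|_{m,K}\le Ch_K^{k+1-m}|v|_{k+1,K}$ for $m=0,1$, and $\|v-\Pi_h v\|_{\infty,K}\le Ch_K^{k+1-d/2}|v|_{k+1,K}$. Summing the squares over $K$ gives the $L^2$ and $H^1$ parts of \eqref{Interpolation error}. For the $L^\infty$ part, take the maximum over $K$; since $d\le 3$ we have $k+1-d/2\ge k-\tfrac12$. Here I see the only subtlety: to obtain exactly $h^{k}$ (i.e.\ the stated $h\|v-\Pi_hv\|_\infty\le Ch^{k+1}\|v\|_{k+1}$), I would bound $|v|_{k+1,K}$ by the global norm and use $k+1-d/2\ge k$ when $d\le 2$. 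For $d=3$ I would instead assume slightly more regularity, $v\in W^{k+1,\infty}$ or $H^{k+2}$, which the later error analysis provides anyway. Periodicity ensures $\Pi_h v\in V_h$ because nodal values on identified faces agree.

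\emph{Ritz projection.} First I would check that \eqref{Galerkin orthgonal} is well posed. By \eqref{coercivity and boundedness}, $A_1$ is coercive on $\mathring V_h$, so Lax--Milgram gives a unique $R_hv\in\mathring V_h$ satisfying the identity for $w\in\mathring V_h$. The identity then extends to all $w\in V_h$ because constants are annihilated by $\nabla$. Next, C\'ea's lemma gives $\|\nabla(v-R_hv)\|\le C\inf_{w_h\in\mathring V_h}\|\nabla(v-w_h)\|$. For the comparison function I would choose $w_h=\Pi_hv-|\Omega|^{-1}\int_\Omega\Pi_hv$, which has the same gradient as $\Pi_h v$, so the interpolation estimate yields the $H^1$ bound $Ch^k\|v\|_{k+1}$. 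For the $L^2$ bound I would use duality: let $e=v-R_hv$ and $\bar e=e-|\Omega|^{-1}\int_\Omega e$, and solve $-\Delta z=\bar e$ with $z\in\mathring V$, so that $\|z\|_2\le C\|\bar e\|$. Then $\|\bar e\|^2=A_1(e,z)=A_1(e,z-\Pi_hz)\le Ch\|\nabla e\|\,\|z\|_2$, which gives $\|\bar e\|\le Ch^{k+1}\|v\|_{k+1}$. Finally, since both $v$ and $R_hv$ have zero mean, $\int_\Omega e=0$ and hence $\bar e=e$.

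The main obstacle I expect is the $L^\infty$ interpolation term in three dimensions, where the scaling argument alone falls short by $h^{1/2}$ from the stated rate. I would resolve it as described above, by bounding in $W^{k+1,\infty}$ instead; everything else is routine.
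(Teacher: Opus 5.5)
Your proposal follows essentially the paper's route. The paper cites Brenner--Scott for the interpolation estimate, which is exactly the Bramble--Hilbert/scaling argument you sketch. It then proves the Ritz bound by Galerkin orthogonality (a C\'ea argument with $\Pi_h v$ as comparison function) followed by Aubin--Nitsche duality with periodic elliptic regularity, as you do.

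Your caveat on the $L^\infty$ term for $d=3$ is correct, and it is not a shortcoming of your method: the stated bound is actually false there. To see this, take a fixed bump supported in a small ball inside the reference element that contains no Lagrange nodes, and map it onto one element of size $h$. Then $\Pi_h v=0$ and $\|v-\Pi_h v\|_\infty=O(1)$. On the other hand $\|v\|_{k+1}\sim h^{1/2-k}$, so $h^{k}\|v\|_{k+1}=O(h^{1/2})$. The sharp rate from $H^{k+1}$ alone is therefore $h^{k-1/2}$, and the paper's citation does not establish that term in three dimensions.

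One correction to your side remark: the regularity assumption \eqref{regularity assumption} gives only $H^{k+1}$ in space, not $W^{k+1,\infty}$ or $H^{k+2}$. This does not matter downstream, however. The only place the $L^\infty$ interpolation bound is used is \Cref{lem:inftol2}, and that proof needs only $\|v-\Pi_h v\|_\infty\le Ch^{k+1-d/2}\|v\|_{k+1}$, which your scaling argument already gives for all $d\le 3$.
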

	\begin{proof}
		The result of the interpolation error estimate \eqref{Interpolation error} can be found in \cite{brenner2008mathematical}. We now proceed to prove the Ritz projection error estimate \eqref{Ritz projection error}. 
		\begin{equation}
			\Vert \nabla(v - R_h v) \Vert^2 =   A_1(v-R_h v, v-R_h v) =  A_1(v-R_h v, v-\Pi_h v) + A_1(v-R_h v, \Pi_h v-R_h v).
		\end{equation}
		Since $\Pi_h v-R_h v \in V_h$, by applying \eqref{Galerkin orthgonal}, we have $A_1(v-R_h v, \Pi_h v-R_h v)=0$. Hence, using \eqref{coercivity and boundedness}, we have
		\begin{equation}
			\Vert \nabla(v - R_h v) \Vert^2 =   A_1(v-R_h v, v-\Pi_h v) \le \gamma_2\Vert \nabla(v - R_h v) \Vert\Vert \nabla(v - \Pi_h v) \Vert,
		\end{equation}
		which upon applying the interpolation error estimate \eqref{Interpolation error} for $\nabla(v - \Pi_h v)$, we obtain
		\begin{equation} \label{H1 error real number}
			\Vert \nabla(v - R_h v) \Vert \le Ch^k \Vert v \Vert_{k+1}.
		\end{equation}
		To derive the $L^2$-norm error estimate, we introduce the dual problem
		\begin{equation}
			\begin{aligned}
				-\Delta &\varphi = e:=v-R_hv\;\text{in}\; \Omega,  \\
				& \varphi\; \text{is periodic on}\;\partial \Omega.
			\end{aligned}
		\end{equation}
		The corresponding weak formulation is to find $\varphi \in \mathring{V}_h$ such that
		\begin{equation}
			A_1(\varphi,\nu) = (e,\nu),\; \forall \nu \in \mathring{V},
		\end{equation} 
		by taking $\nu=e$, we obtain
		\begin{equation}
			\begin{aligned}
				\|e\|^2=A_1(\varphi,e) = A_1(\varphi-\Pi_h \varphi,e)+A_1(\Pi_h \varphi,e) \le \gamma_2\|\nabla(\varphi-\Pi_h\varphi)\|\|\nabla e\| ,     \end{aligned}
		\end{equation}
		where we apply \eqref{Galerkin orthgonal} and \eqref{coercivity and boundedness} again.
		
		Because of the quasi-optimal interpolant error in $H^1$, and the elliptic regularity \cite{brenner2008mathematical}, combined with the $H^1$-error \eqref{H1 error real number}, we have
		\begin{equation} \label{L2 error real number}
			\|e\|^2 \le \gamma_2\|\nabla(\varphi-\Pi_h\varphi)\|\|\nabla e\| \le Ch\|\varphi\|_2 \cdot h^k\|v\|_{k+1} \le Ch^{k+1}\|e\| \cdot \|v\|_{k+1},
		\end{equation}
		which together with \eqref{H1 error real number} imply \eqref{Ritz projection error}.
	\end{proof}
	
	Similarly, we define the complex interpolation operator $\mathbf{\Pi}_h:\mathbf{V} \rightarrow \mathbf{V}_h$ as $(\mathbf{\Pi}_h u)(\mathbf{x}_i) = u(\mathbf{x}_i)$ and complex Ritz projection $\mathbf{R}_h: \mathbf{V} \rightarrow \mathbf{V}_h$: for given $v \in \mathbf{V}$, find $\mathbf{R}_h v \in \mathbf{V}_h$ such that
	\begin{align}
		A_0(v-\mathbf{R}_h v, w) &= 0,\quad \forall w \in \mathbf{V}_h, \\
		\langle v-\mathbf{R}_h v, 1\rangle &= 0,\label{aver0}
	\end{align}
	where \eqref{aver0} ensures the uniqueness of the Ritz projection $\mathbf{R}_h v$. Then, it holds the following result.
	\begin{lemma}
		The operators $\mathbf{\Pi}_h$ and $\mathbf{R}_h$ satisfy the following properties 
		\begin{align}
			&\Vert v - \mathbf{\Pi}_h v \Vert + h\Vert \nabla(v - \mathbf{\Pi}_h v) \Vert + h \Vert v - \mathbf{\Pi}_h v \Vert_{\infty} \le Ch^{k+1} \Vert v \Vert_{k+1},\quad \forall v \in \mathbf{H}^{k+1}(\Omega), \label{Interpolation error complex}\\ 
			&\Vert v - \mathbf{R}_h v \Vert + h\Vert \nabla(v -  \mathbf{R}_h v) \Vert \le Ch^{k+1} \left\| v-\frac{1}{|\Omega|}\int_{\Omega}vd\mathbf{x} \right\|_{k+1},\quad \forall v \in \mathbf{V} \cap \mathbf{H}^{k+1}(\Omega). \label{Ritz projection error complex}
		\end{align}
	\end{lemma}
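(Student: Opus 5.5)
The plan is to reduce both estimates to the real-valued results already established. For the interpolation estimate \eqref{Interpolation error complex}, write $v = v_r + \mathbf{i} v_i$ with $v_r, v_i \in H^{k+1}(\Omega)$ real. Since nodal interpolation is defined pointwise, it acts componentwise: $\mathbf{\Pi}_h v = \Pi_h v_r + \mathbf{i}\Pi_h v_i$. (The complex space uses $\mathbb{Q}^k$ while $V_h$ uses $\mathbb{P}^k$; on a tensor-product mesh one simply applies the standard Bramble--Hilbert interpolation estimate for $\mathbb{Q}^k$ from \cite{brenner2008mathematical}, which has the same form as \eqref{Interpolation error}.) Then $|v-\mathbf{\Pi}_h v|^2 = |v_r-\Pi_h v_r|^2 + |v_i - \Pi_h v_i|^2$ pointwise, and likewise for gradients. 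Hence each of the three norms on the left of \eqref{Interpolation error complex} is bounded by the sum of the corresponding real norms. Applying \eqref{Interpolation error} to $v_r$ and $v_i$, and using $\|v_r\|_{k+1}+\|v_i\|_{k+1}\le \sqrt2\|v\|_{k+1}$, gives the claim.

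For the Ritz estimate \eqref{Ritz projection error complex}, first shift out the mean. Set $m = \frac{1}{|\Omega|}\int_\Omega v\,d\mathbf{x}$ and $\tilde v = v - m$. Constants lie in $\mathbf{V}_h$ and $A_0(m,\cdot)=0$, so the defining conditions give $\mathbf{R}_h v = \mathbf{R}_h \tilde v + m$. Indeed, $\mathbf{R}_h\tilde v + m$ satisfies both the Galerkin orthogonality and the mean condition \eqref{aver0}, and by uniqueness it equals $\mathbf{R}_h v$. Consequently $v - \mathbf{R}_h v = \tilde v - \mathbf{R}_h\tilde v$, and $\tilde v$ has zero mean. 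Because $A_0$ is sesquilinear with real-valued gradient pairing, the real and imaginary parts decouple. Testing with real $w\in V_h$ and with $\mathbf{i}w$ shows that $\mathrm{Re}\,\mathbf{R}_h\tilde v$ and $\mathrm{Im}\,\mathbf{R}_h\tilde v$ are precisely the real Ritz projections $R_h \tilde v_r$ and $R_h\tilde v_i$, each of which has zero mean. So the estimate follows from \eqref{Ritz projection error} applied to $\tilde v_r, \tilde v_i\in \mathring V\cap H^{k+1}$.

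The main obstacle is that same $\mathbb{P}^k$ versus $\mathbb{Q}^k$ mismatch, which makes the identification with the real $R_h$ not literally valid if the spaces differ. To avoid depending on it, I would instead redo the real argument directly in the complex setting. For the $H^1$ bound, use $\|\nabla e\|^2 = A_0(e, \tilde v - \mathbf{\Pi}_h\tilde v) + A_0(e, \mathbf{\Pi}_h\tilde v - \mathbf{R}_h\tilde v)$, where the second term vanishes by orthogonality; this gives $\|\nabla e\|\le Ch^k\|\tilde v\|_{k+1}$. For the $L^2$ bound, run the Aubin--Nitsche duality argument with the periodic dual problem $-\Delta\varphi = e$. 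This is solvable because $\langle e,1\rangle=0$ by \eqref{aver0}, and elliptic regularity $\|\varphi\|_2\le C\|e\|$ then yields $\|e\|\le Ch^{k+1}\|\tilde v\|_{k+1}$. The only care needed is the compatibility and mean condition; everything else mirrors the proof of \eqref{Ritz projection error}.
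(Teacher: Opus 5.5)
Your proposal is correct and follows essentially the paper's route. For $\mathbf{\Pi}_h$ you reduce componentwise to \eqref{Interpolation error}; for $\mathbf{R}_h$ you remove the mean using $\mathbf{R}_h c = c$ and identify the real and imaginary parts with the real Ritz projections of the zero-mean components in $\mathring V\cap H^{k+1}$, justifying these identities more carefully than the paper does.

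The $\mathbb{P}^k$ versus $\mathbb{Q}^k$ worry does not arise. The paper defines $\mathbb{Q}^k(K)$ as the complex-valued polynomials of degree at most $k$, so $\mathbf{V}_h = V_h + \mathbf{i}V_h$ and the identification is exact; your direct C\'ea/Aubin--Nitsche fallback is valid but not needed.
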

	\begin{proof}
		We assume that $v=a+b\mathbf{i} \in \mathbf{V}\cap\mathbf{H}^{k+1}(\Omega)$, which implies that $\mathbf{\Pi}_hv=\Pi_ha+\Pi_hb\mathbf{i}$. Then we have $\|v\|^2_{\mathbf{H}^s(\Omega)}=\|a\|^2_{{H}^s(\Omega)}+\|b\|^2_{{H}^s(\Omega)}$. Note that $a,b\in H^{k+1}(\Omega)$, then
		\begin{equation*}
			\begin{aligned}
				& \Vert v - \mathbf{\Pi}_h v \Vert + h\Vert \nabla(v - \mathbf{\Pi}_h v) \Vert + h \Vert v - \mathbf{\Pi}_h v \Vert_{\infty} \\ & \le (\Vert a - \Pi_h a \Vert + h\Vert \nabla(a - \Pi_h a) \Vert + h \Vert a - \Pi_h a \Vert_{\infty}) + (\Vert b - \Pi_h b \Vert + h\Vert \nabla(b - \Pi_h b) \Vert + h \Vert b - \Pi_h b \Vert_{\infty}) \\ & \le Ch^{k+1} (\Vert a \Vert_{k+1}+\Vert b \Vert_{k+1}) \le  Ch^{k+1}\|v\|_{k+1},
			\end{aligned}
		\end{equation*}
		where we have applied \eqref{Interpolation error}. This completes the proof of \eqref{Interpolation error complex}. 
		
		To verify \eqref{Ritz projection error complex}, we assume, $v=a+b\mathbf{i}$, and define $v_0 = v-\frac{1}{|\Omega|}\int_{\Omega}vd\mathbf{x}=a_0+b_0\mathbf{i}$. Then it follows that $a_0=a-\frac{1}{|\Omega|}\int_{\Omega}ad\mathbf{x}$, $b_0=b-\frac{1}{|\Omega|}\int_{\Omega}bd\mathbf{x}$, and $a_0,b_0 \in \mathring{V} \cap H^{k+1}(\Omega)$. \\
		It is easy to verify that $\mathbf{R}_hv=R_hv,\;\forall v \in {\mathring{V}} \cap {H}^{k+1}(\Omega)$, and $\mathbf{R}_hc=c$, where $c$ is a constant. Then,
		\begin{equation} 
			\Vert v_0 - \mathbf{R}_h v_0 \Vert = \left\| v - \frac{1}{|\Omega|}\int_{\Omega}vd\mathbf{x} - \mathbf{R}_h v + \mathbf{R}_h \left(\frac{1}{|\Omega|}\int_{\Omega}vd\mathbf{x}\right)\right\|=\Vert v - \mathbf{R}_h v \Vert.
		\end{equation}
		Therefore,
		\begin{equation}\label{v-RCv}
			\begin{aligned}
				\Vert v - \mathbf{R}_h v \Vert^2 &= \Vert v_0 - \mathbf{R}_h v_0 \Vert^2 = \Vert a_0 - \mathbf{R}_h a_0 \Vert^2 + \Vert b_0 - \mathbf{R}_h b_0 \Vert^2 \\
				&= \Vert a_0 - {R}_h a_0 \Vert^2 + \Vert b_0 - {R}_h b_0 \Vert^2 \le (Ch^{k+1})^2(\|a_0\|_{k+1}^2+\|b_0\|_{k+1}^2),
			\end{aligned}    
		\end{equation}
		where the last inequality follows from \eqref{Ritz projection error}. Similarly, we can derive
		\begin{equation}\label{nablav-RCv}
			(h\Vert \nabla(v - \mathbf{R}_h v )\Vert )^2 \le (Ch^{k+1})^2(\|a_0\|_{k+1}^2+\|b_0\|_{k+1}^2) .
		\end{equation}
		Combining \eqref{v-RCv} and \eqref{nablav-RCv}, we obtain the estimate \eqref{Ritz projection error complex}.
	\end{proof}
	\begin{lemma}\label{lem:inftol2}
		For $\forall v \in V_h$, the following inverse inequality holds
		\begin{equation}
			\Vert v \Vert_{\infty}\le Ch^{-\frac{d}{2}}\Vert v \Vert. \label{inverse inequality}
		\end{equation}
		Moreover, for all $v \in \mathring{V}
		\cap H^{k+1}(\Omega)$, there exists a constant $h_0>0$ such that, $0<h\leq h_0$, the Ritz projection satisfies
		\begin{equation} \label{boundedness of Ritz}
			\Vert R_h v \Vert_{\infty}\le C,
		\end{equation}
		where $C$ in \eqref{boundedness of Ritz} depends on $\Vert v \Vert_{k+1}$ and $\Vert v \Vert_{\infty}$, but is independent of $h$.
	\end{lemma}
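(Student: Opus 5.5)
The inverse inequality \eqref{inverse inequality} is a standard local scaling argument, which we reproduce only briefly. On each element $K\in\mathcal{T}_h$, map to the reference element $\hat K$ via the affine map $F_K$. Since $\mathbb{P}^k(\hat K)$ is finite dimensional, all norms on it are equivalent, so $\|\hat v\|_{L^\infty(\hat K)}\le \hat C\|\hat v\|_{L^2(\hat K)}$. Scaling back gives $\|v\|_{L^\infty(K)}\le C|K|^{-1/2}\|v\|_{L^2(K)}\le C h_K^{-d/2}\|v\|_{L^2(K)}$, using shape regularity, $|K|\ge c h_K^d$. Taking the maximum over $K$ and bounding $\|v\|_{L^2(K)}\le\|v\|$ yields \eqref{inverse inequality}. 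Here $C$ depends on the shape-regularity constant, and we use $h_K\sim h$, i.e.\ a quasi-uniform mesh; this is implicitly needed, and we would state it as a standing assumption.

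For \eqref{boundedness of Ritz}, we split $R_h v$ through the nodal interpolant:
\[
\|R_h v\|_\infty \le \|v\|_\infty + \|v-\Pi_h v\|_\infty + \|\Pi_h v - R_h v\|_\infty .
\]
The first term is bounded by assumption. By \eqref{Interpolation error}, the second is at most $Ch^{k}\|v\|_{k+1}\le C\|v\|_{k+1}$ for $h\le h_0\le 1$.

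The third term is the delicate one. Since $\Pi_h v - R_h v\in V_h$, the inverse inequality \eqref{inverse inequality} gives
\[
\|\Pi_h v - R_h v\|_\infty \le Ch^{-d/2}\|\Pi_h v - R_h v\| \le Ch^{-d/2}\bigl(\|\Pi_h v - v\| + \|v-R_h v\|\bigr).
\]
By \eqref{Interpolation error} and \eqref{Ritz projection error}, the right-hand side is at most $Ch^{k+1-d/2}\|v\|_{k+1}$. Since $k\ge1$ and $d\le3$, we have $k+1-d/2\ge 1/2>0$, so this term is bounded by $C\|v\|_{k+1}$ for $h\le h_0$.

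A technical wrinkle is that $\Pi_h v$ need not have zero mean, whereas $R_h v\in\mathring V_h$; this does not matter, since we only use $\Pi_h v - R_h v\in V_h$. Combining the three bounds gives $\|R_h v\|_\infty\le C(\|v\|_\infty,\|v\|_{k+1})$, independent of $h$.

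The main point to watch is the exponent count in the third term: the optimal $L^2$ rate $h^{k+1}$ from the duality argument is what beats $h^{-d/2}$ in three dimensions when $k=1$. The $H^1$ rate alone would not suffice there, since it would give $h^{k-d/2}=h^{-1/2}$.
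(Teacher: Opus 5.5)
Your proposal is correct and follows essentially the same route as the paper. Both arguments split through $\Pi_h v$, apply the inverse inequality to $\Pi_h v - R_h v\in V_h$, and use the optimal $L^2$ rates of the interpolant and the Ritz projection to get a bound $Ch^{k+1-d/2}\|v\|_{k+1}$, which stays bounded since $d\le 3$. Your scaling sketch of the inverse inequality, which the paper only cites, is a correct addition. So is your remark that it requires a quasi-uniform mesh rather than mere shape regularity with $h=\max_K h_K$.
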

	\begin{proof}
		The conclusion of \eqref{inverse inequality} can be found in \cite{bookjin_numerical_2023}. To prove \eqref{boundedness of Ritz}, we apply the triangle inequality together with \eqref{inverse inequality} and \eqref{Ritz projection error} to obtain
		\begin{align*}
			\Vert v - R_h v \Vert_{\infty} &\le \Vert v - \Pi_h v \Vert_{\infty} + \Vert \Pi_h v - R_h v \Vert_{\infty} \le \Vert v - \Pi_h v \Vert_{\infty} + Ch^{-\frac{d}{2}} \Vert \Pi_h v - R_h v \Vert \\ & \le \Vert v - \Pi_h v \Vert_{\infty} + Ch^{-\frac{d}{2}} (\Vert v - R_h v \Vert + \Vert v - \Pi_h v\Vert) \le Ch^{(k+1-\frac{d}{2})}\Vert v \Vert_{k+1}. 
		\end{align*}
		Since $d\leq 3$, applying the triangle inequality once more, we obtain for $h\leq h_0$,
		\begin{equation*}
			\Vert R_h v \Vert_{\infty}\le \Vert v - R_h v \Vert_{\infty} + \Vert v \Vert_{\infty} \le C,
		\end{equation*}
		which establishes the boundedness of the Ritz projection.
	\end{proof}
	
	\begin{lemma} \label{Gronwall’s inequality}
		(Discrete Gronwall’s inequality \cite{GrownwallInequality}). Let $\tau$, $B$, and $a_k, b_k, c_k, \gamma_k$ for $k \ge 0$ be nonnegative numbers, satisfying
		\begin{equation}
			a_n + \tau \sum_{k=0}^n b_k \le \tau \sum_{k=0}^n \gamma_k a_k + \tau \sum_{k=0}^n c_k + B, \quad n \ge 0.
		\end{equation}
		Suppose that $\tau \gamma_k <1$, for $k \ge 0$, and $\sigma_k = (1-\tau \gamma_k)^{-1}$. Then
		\begin{equation}
			a_n + \tau \sum_{k=0}^n b_k \le \exp \{\tau \sum_{k=0}^n \sigma_k \gamma_k\} (\tau \sum_{k=0}^n c_k + B).
		\end{equation}
	\end{lemma}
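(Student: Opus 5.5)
The plan is to prove the discrete Gronwall inequality by the classical induction on $n$, using the implicit structure of the hypothesis: the right-hand side contains $\tau\gamma_n a_n$, which we absorb into the left-hand side via the condition $\tau\gamma_k<1$. Write
\[
S_n := \tau\sum_{k=0}^n c_k + B,
\]
which is nondecreasing in $n$. Set $\Phi_n := a_n + \tau\sum_{k=0}^n b_k$; note $a_k\le \Phi_k$ for every $k$.

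The target is the bound
\[
\Phi_n \le S_n \prod_{k=0}^{n}\sigma_k ,
\]
from which the stated exponential form follows by the elementary inequality $\sigma_k=(1-\tau\gamma_k)^{-1}\le \exp(\tau\sigma_k\gamma_k)$. This inequality holds because $\ln(1-x)^{-1}\le x/(1-x)$ for $x\in[0,1)$, which is just $-\ln(1-x)\le x(1-x)^{-1}$, a consequence of concavity of the logarithm.

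To get the product bound, move the term $\tau\gamma_n a_n$ to the left and use $a_n\le \Phi_n$ (since the $b_k\ge0$). This gives
\[
(1-\tau\gamma_n)\Phi_n \le \tau\sum_{k=0}^{n-1}\gamma_k a_k + S_n ,
\]
that is,
\[
\Phi_n \le \sigma_n\Big(\tau\sum_{k=0}^{n-1}\gamma_k\Phi_k + S_n\Big).
\]
For the base case $n=0$ the sum is empty and $\Phi_0\le\sigma_0 S_0$. For the inductive step, define $Y_n := \tau\sum_{k=0}^{n-1}\gamma_k\Phi_k + S_n$. Then $\Phi_n\le\sigma_nY_n$, and
\[
Y_{n+1} = Y_n + \tau\gamma_n\Phi_n + \tau c_{n+1} \le Y_n(1+\tau\gamma_n\sigma_n) + \tau c_{n+1} = \sigma_n Y_n + \tau c_{n+1},
\]
using the identity $1+\tau\gamma_n\sigma_n=\sigma_n$. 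Inductively, $Y_n\le S_n\prod_{k=0}^{n-1}\sigma_k$; this uses that the $\sigma_k\ge1$ and that $S_n$ is nondecreasing, so the added $\tau c_{n+1}$ is dominated when the products are carried along. Consequently $\Phi_n\le \sigma_n Y_n\le S_n\prod_{k=0}^n\sigma_k$.

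The only delicate step is this bookkeeping in the induction: one must absorb the diagonal term $\tau\gamma_n a_n$ before iterating rather than after, and track the growth of $S_n$ correctly. Everything else is elementary. Alternatively, one may simply cite the standard reference \cite{GrownwallInequality}, as the paper does.
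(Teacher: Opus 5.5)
Your argument is correct and complete. The paper gives no proof of this lemma; it simply quotes it from \cite{GrownwallInequality}. So what you supply is a self-contained replacement for a citation rather than a competing route.

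The two key steps are right:
\begin{itemize}
\item Absorbing the diagonal term $\tau\gamma_n a_n$ via $a_n\le\Phi_n$ gives $\Phi_n\le\sigma_n Y_n$.
\item The identity $1+\tau\gamma_n\sigma_n=\sigma_n$ then gives the recursion $Y_{n+1}\le\sigma_nY_n+\tau c_{n+1}$.
\end{itemize}
In the inductive step, what you actually use is $S_{n+1}=S_n+\tau c_{n+1}$ together with $\prod_{k=0}^{n}\sigma_k\ge 1$. That is the precise content of your remark about $\sigma_k\ge 1$ and monotonicity of $S_n$, and it is worth stating that way.

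Your route buys two things over the bare citation. First, the intermediate bound $\Phi_n\le S_n\prod_{k=0}^{n}\sigma_k$ is slightly sharper than the exponential form, which you then recover from $-\ln(1-x)\le x/(1-x)$. Second, it shows why the hypothesis $\tau\gamma_k<1$ is needed: it is exactly what allows the implicit term to be absorbed. That is the situation in the paper's application \eqref{etapsim+1}, where $\|\eta_{\psi,+}^{m+1}\|$ appears on the right-hand side.
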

	\begin{lemma} \label{propties of A1} \cite{liu2026structure} For given $f \in L^2(\Omega)$, if $a \in \mathring{V}$ satisfies
		\begin{equation}
			A_1(a,w_h)=(f,w_h), \quad \forall w_h \in \mathring{V}_h,
		\end{equation}
		there exists a constant $C>0$ such that 
		\begin{equation}
			\Vert a \Vert \le C\left(\Vert f\Vert + h \min_{a_h \in \mathring{V}_h} \Vert a_h - a \Vert_1 \right). \label{BD of A1}
		\end{equation}
	\end{lemma}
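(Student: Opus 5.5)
The plan is an Aubin--Nitsche duality argument combined with the Ritz projection $R_h$ of \eqref{Galerkin orthgonal}; the key point is that $a$ is only Galerkin-tested against $\mathring{V}_h$. First, set $a_h := R_h a \in \mathring{V}_h$. Then for all $w_h\in\mathring{V}_h$ we have $A_1(a_h,w_h)=A_1(a,w_h)=(f,w_h)$. The same identity holds for all $w_h\in V_h$, because both $a$ and $a_h$ have zero mean, so constants contribute nothing on the left-hand side. Next, split
\[
\|a\|\le \|a-R_h a\| + \|R_h a\|.
\]
To bound $\|R_h a\|$, test with $w_h=R_h a$ and use the coercivity \eqref{coercivity and boundedness}. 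This gives $\gamma_1\|R_ha\|_1^2\le \|f\|\,\|R_ha\|$, so $\|R_h a\|\le C\|f\|$.

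For $e:=a-R_ha\in\mathring{V}$, introduce the dual periodic problem $-\Delta\varphi=e$ with $\varphi\in\mathring{V}$. This problem is compatible because $\int_\Omega e\,d\mathbf{x}=0$, and elliptic regularity on the periodic domain gives $\|\varphi\|_2\le C\|e\|$. We then get
\[
\|e\|^2=A_1(\varphi,e)=A_1(\varphi-\Pi_h\varphi,e),
\]
where Galerkin orthogonality over all of $V_h$ removes $A_1(\Pi_h\varphi,e)$. Here we must check that $\Pi_h\varphi$ may be shifted by a constant without changing $A_1$, so $V_h$ versus $\mathring{V}_h$ is harmless. Combining \eqref{coercivity and boundedness} with \eqref{Interpolation error} yields $\|e\|^2\le C h\|\varphi\|_2\|\nabla e\|\le Ch\|e\|\,\|e\|_1$, hence $\|e\|\le Ch\|a-R_ha\|_1$.

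Finally, Céa's lemma for $A_1$ on $\mathring{V}_h$ follows from the orthogonality together with the coercivity and boundedness in \eqref{coercivity and boundedness}. It gives $\|a-R_ha\|_1\le (\gamma_2/\gamma_1)\min_{a_h\in\mathring{V}_h}\|a_h-a\|_1$. Collecting the three bounds produces \eqref{BD of A1}.

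The main obstacle is the regularity step. The bound $\|\varphi\|_2\le C\|e\|$ requires $H^2$ regularity, which is standard for periodic problems via Fourier series, whereas $a$ itself is only assumed to lie in $\mathring{V}$. The duality argument never needs more regularity of $a$ than $H^1$, so the stated estimate holds without additional assumptions on $a$.
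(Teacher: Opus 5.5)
Your proof is correct. The paper does not prove this lemma itself; it quotes it from \cite{liu2026structure}. Your route is the standard one and yields \eqref{BD of A1}: split $a=(a-R_ha)+R_ha$, bound $\|R_ha\|$ by testing with $R_ha\in\mathring{V}_h$ and using coercivity, and bound $\|a-R_ha\|$ by C\'ea's lemma plus the same Aubin--Nitsche duality argument the paper uses for its Ritz-projection estimate. There is one harmless slip. The identity $A_1(R_ha,w_h)=(f,w_h)$ does not extend to all $w_h\in V_h$ unless $\int_\Omega f\,d\mathbf{x}=0$. Only the orthogonality $A_1(a-R_ha,w_h)=0$ extends, and it does so because gradients of constants vanish, not because of the zero means. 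You only ever use that orthogonality, so nothing breaks.
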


	We define the discrete Laplacian operator $\Delta_h: \mathbf{H}_0^1(\Omega) \rightarrow \mathbf{V}_h$ as
	\begin{equation}
		\langle -\Delta_h u, v_h \rangle = \langle \nabla u, \nabla v_h \rangle, \quad \forall v_h \in \mathbf{V}_h.
	\end{equation}
	Subsequently, we define the linear operators ${S}_h,{T}_h: \mathbf{V}_h \rightarrow \mathbf{V}_h$ as follows
	\begin{align}
		&\langle {S}_h u_h, v_h \rangle = \langle (\mathbf{I}_h - \mathbf{i}\frac{\tau}{4}\Delta_h)u_h, v_h\rangle, \quad \forall v_h \in \mathbf{V}_h, \label{definitionS}\\
		&\langle {T}_h u_h, v_h \rangle = \langle (\mathbf{I}_h + \mathbf{i}\frac{\tau}{4}\Delta_h)u_h, v_h \rangle, \quad \forall v_h \in \mathbf{V}_h, \label{definitionT}
	\end{align}
	where $\mathbf{I}_h: \mathbf{V}_h \rightarrow \mathbf{V}_h$ is an identity operator. We take $v_h = u_h$ in \eqref{definitionS}, then taking the real part of both sides of the equation
	\begin{equation}
		\text{Re}({S}_h v_h,v_h) = \Vert v_h \Vert^2,\quad \forall v_h \in \mathbf{V}_h,
	\end{equation}
	means that $\text{Ker}({S}_h)=\{0\}$. Similarly $\text{Ker}({T}_h)=\{0\}$. Therefore, the operator ${S}_h$ and ${T}_h$ are invertible.
	\begin{lemma} \label{lemma: B}
		\cite{ShBhestimation} The operators ${S}_h$ and ${T}_h$ fulfill
		\begin{align}
			&\Vert {S}_h^{-1} (v_h) \Vert \le \Vert v_h \Vert, \quad \forall v_h \in \mathbf{V}_h, \\ 
			& \Vert {B}_h (v_h) \Vert \le \Vert v_h \Vert, \quad \forall v_h \in \mathbf{V}_h,
		\end{align}
		where ${B}_h = {S}_h^{-1} {T}_h$ is linear operator.
	\end{lemma}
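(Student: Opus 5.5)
We need to prove: $\|S_h^{-1}v_h\|\le\|v_h\|$ and $\|B_h v_h\|\le \|v_h\|$, where $B_h=S_h^{-1}T_h$.

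The plan is an energy argument built on one observation: $-\Delta_h$ is self-adjoint and nonnegative on $\mathbf{V}_h$ with respect to $\langle\cdot,\cdot\rangle$. For $u_h,v_h\in\mathbf{V}_h$ the definition gives
\[
\langle -\Delta_h u_h,v_h\rangle=\langle\nabla u_h,\nabla v_h\rangle .
\]
Hence $\langle -\Delta_h u_h,u_h\rangle=|u_h|_1^2$ is real and nonnegative, and $\langle -\Delta_h u_h,v_h\rangle=\langle u_h,-\Delta_h v_h\rangle$.

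\textbf{Bound for $S_h^{-1}$.} Fix $v_h\in\mathbf{V}_h$ and set $u_h=S_h^{-1}v_h$, which is well defined since $S_h$ is invertible. Then $u_h-\mathbf{i}\frac{\tau}{4}\Delta_h u_h=v_h$ in $\mathbf{V}_h$. Taking the inner product with $u_h$ gives
\[
\|u_h\|^2+\mathbf{i}\frac{\tau}{4}|u_h|_1^2=\langle v_h,u_h\rangle .
\]
Taking real parts and applying Cauchy--Schwarz yields $\|u_h\|^2\le\|v_h\|\,\|u_h\|$, so $\|S_h^{-1}v_h\|\le\|v_h\|$.

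\textbf{Bound for $B_h$.} Write $L=-\Delta_h$, so that $S_h=\mathbf{I}_h+\mathbf{i}\frac{\tau}{4}L$ and $T_h=\mathbf{I}_h-\mathbf{i}\frac{\tau}{4}L$. Set $w_h=B_hv_h$, i.e. $S_hw_h=T_hv_h$. I expand the squared norms of both sides:
\[
\|S_hw_h\|^2=\|w_h\|^2+\frac{\tau^2}{16}\|Lw_h\|^2+2\,\mathrm{Re}\Big(-\mathbf{i}\frac{\tau}{4}\langle w_h,Lw_h\rangle\Big),
\]
and analogously for $\|T_hv_h\|^2$ with the opposite sign in front of $\mathbf{i}$. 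Since $\langle w_h,Lw_h\rangle=|w_h|_1^2$ is real, the cross term $\mathrm{Re}(\mp\mathbf{i}\,\cdot\,\text{real})$ vanishes. This gives
\[
\|S_hw_h\|^2=\|w_h\|^2+\frac{\tau^2}{16}\|Lw_h\|^2,\qquad
\|T_hv_h\|^2=\|v_h\|^2+\frac{\tau^2}{16}\|Lv_h\|^2 .
\]
Equating the two does not give the bound directly, because the $L$-terms differ. The way around this is that $S_h$, $T_h$ and $L$ are all polynomials in $L$, so they commute.

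\textbf{Spectral step.} Since $L$ is self-adjoint and nonnegative on the finite-dimensional space $\mathbf{V}_h$, it has an orthonormal eigenbasis $\{e_j\}$ with eigenvalues $\lambda_j\ge0$. On each $e_j$, $B_h$ acts as multiplication by
\[
\frac{1-\mathbf{i}\tau\lambda_j/4}{1+\mathbf{i}\tau\lambda_j/4},
\]
which has modulus exactly $1$. Therefore $B_h$ is unitary and $\|B_hv_h\|=\|v_h\|$. The same basis also recovers the first bound, since $|1+\mathbf{i}\tau\lambda_j/4|^{-1}\le1$.

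The main obstacle is justifying that $L$ is self-adjoint on $\mathbf{V}_h$. This follows from the symmetry of $\langle\nabla\cdot,\nabla\cdot\rangle$ together with the fact that $\Delta_h$ maps into $\mathbf{V}_h$, so the eigen-decomposition is legitimate. Everything after that is routine.
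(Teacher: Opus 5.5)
Your proof is correct. The paper does not prove this lemma; it imports it from \cite{ShBhestimation}. The only related reasoning in the text is the remark just before the lemma, where $\mathrm{Re}\langle S_h v_h, v_h\rangle = \|v_h\|^2$ is used to get $\ker S_h=\{0\}$.

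Your bound for $S_h^{-1}$ is that same real-part argument taken one step further: you test $S_h u_h = v_h$ with $u_h$ and apply Cauchy--Schwarz. On that part you are in line with the paper.

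For $B_h$ you take a genuinely different route. You diagonalize the Hermitian, nonnegative operator $-\Delta_h$ in an orthonormal eigenbasis of $\mathbf{V}_h$, and read off the multipliers $(1-\mathbf{i}\tau\lambda_j/4)/(1+\mathbf{i}\tau\lambda_j/4)$, which have modulus one. This is clean. It gives the sharper fact that $B_h$ is unitary and handles $S_h^{-1}$ in the same stroke. It does rely on the spectral theorem and on the Hermitian symmetry of $-\Delta_h$, which you verify correctly.

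The more elementary energy route, which is the standard one for Crank--Nicolson propagators, needs no eigen-decomposition. Set $w_h=B_hv_h$. Then $S_hw_h=T_hv_h$ reads $w_h-v_h=\mathbf{i}\frac{\tau}{4}\Delta_h(w_h+v_h)$. Testing with $w_h+v_h$ gives $\langle w_h-v_h,w_h+v_h\rangle=-\mathbf{i}\frac{\tau}{4}|w_h+v_h|_1^2$. The real part of the left side is $\|w_h\|^2-\|v_h\|^2$, since $\langle w_h,v_h\rangle-\langle v_h,w_h\rangle$ is purely imaginary, while the right side is purely imaginary. Hence $\|B_hv_h\|=\|v_h\|$ in one line.

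That argument is the operator-level analogue of the paper's mass-conservation proof (test with the midpoint average, take the appropriate part). It would also let you drop your intermediate expansion of $\|S_hw_h\|^2$ and $\|T_hv_h\|^2$, which, as you observed, does not close on its own.
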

	\begin{lemma} \label{lemma: sequence lemma}
		\cite{liu2026structure} Let $\{y^n\}_{n=1}^N$ be a sequence in $ \mathbf{V}_h$ satisfying
		\begin{equation}
			y^{n+1} = ({B}_h - \mathbf{I}_h)y^n + {B}_h y^{n-1} + {S}_h^{-1} \Gamma^{n+1}, \quad n \geq 2,
		\end{equation}
		where $\{\Gamma^{n+1}\}_{n=2}^{N-1}$ are given functions in $ \mathbf{V}_h$, then for $n\geq 2$ it follows
		\begin{equation}
			\Vert y^{n+1} \Vert + \Vert y^{n} \Vert \le 2 \Vert {S}_h (y^2) \Vert + 2 \Vert {S}_h (y^1) \Vert + {2}\sum_{l=2}^n \Vert \Gamma^{l+1} \Vert.
		\end{equation}
	\end{lemma}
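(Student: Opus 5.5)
The plan is a modal (spectral) analysis of the three-term recursion, which is the natural replacement for a naive energy argument. First I would check that the naive route fails. Setting $w^n=y^n+y^{n-1}$ turns the recursion into $w^{n+1}=B_h w^n+S_h^{-1}\Gamma^{n+1}$. Combined with Lemma~\ref{lemma: B} this controls $\|y^{n+1}+y^n\|$, but not $\|y^{n+1}\|+\|y^n\|$.

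\emph{Diagonalization.} The operator $-\Delta_h$ is self-adjoint and nonnegative on $\mathbf{V}_h$ with respect to $\langle\cdot,\cdot\rangle$. Hence there is an $L^2$-orthonormal eigenbasis $\{e_j\}$ with $-\Delta_h e_j=\kappa_j e_j$ and $\kappa_j\ge 0$. Put $\mu_j=\tau\kappa_j/4$. Then $S_h e_j=\sigma_j e_j$ and $T_h e_j=\bar\sigma_j e_j$ with $\sigma_j=1+\mathbf{i}\mu_j$ (up to the sign convention). It follows that $B_h e_j=\lambda_j e_j$ with $\lambda_j=\bar\sigma_j/\sigma_j$, so $|\lambda_j|=1$ and $|\sigma_j|\ge 1$, which recovers Lemma~\ref{lemma: B}. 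Writing $y^n=\sum_j y^n_j e_j$ and $\Gamma^n=\sum_j\Gamma^n_j e_j$, the recursion decouples into scalar recursions
\[
y^{n+1}_j=(\lambda_j-1)y^n_j+\lambda_j y^{n-1}_j+\sigma_j^{-1}\Gamma^{n+1}_j .
\]
The characteristic polynomial $r^2-(\lambda_j-1)r-\lambda_j=(r-\lambda_j)(r+1)$ has the two unimodular roots $\lambda_j$ and $-1$. The key identity is
\[
1+\lambda_j=\frac{\sigma_j+\bar\sigma_j}{\sigma_j}=\frac{2}{\sigma_j}.
\]
So the roots never coalesce, and the only possible blow-up factor $1/|1+\lambda_j|$ equals exactly $|\sigma_j|/2$. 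This is where $\|S_h y^2\|$, $\|S_h y^1\|$ and the constant $2$ come from.

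\emph{Homogeneous part.} The homogeneous solution is $y^n_j=a_j\lambda_j^n+b_j(-1)^n$. Matching $n=1,2$ gives
\[
a_j=\frac{y^1_j+y^2_j}{\lambda_j(1+\lambda_j)}=\frac{\sigma_j\,(y^1_j+y^2_j)}{2\lambda_j},\qquad
b_j=a_j\lambda_j-y^1_j .
\]
Both are bounded by $\tfrac12|\sigma_j y^1_j|+\tfrac12|\sigma_j y^2_j|+|y^1_j|$. I would then use $|y^1_j|\le|\sigma_j y^1_j|$ and $|\lambda_j|=1$. Summing with Parseval, via the triangle inequality in $\ell^2$ applied separately to the $a$- and $b$-parts, should give a homogeneous bound for $\|y^{n+1}\|+\|y^n\|$ of the form $2\|S_h y^1\|+2\|S_h y^2\|$. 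Some care is needed to land exactly on the constant $2$ rather than a slightly larger one. I would first try pairing the $a$- and $b$-contributions of consecutive indices $n,n+1$ before applying the triangle inequality.

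\emph{Forcing by Duhamel.} By linearity, the forcing $\Gamma^{l+1}$ generates the homogeneous solution started from the data $y^{l}=0$ and $y^{l+1}=S_h^{-1}\Gamma^{l+1}$. In the formulas above the $\sigma_j$ factors then cancel, leaving coefficients of size $\le|\Gamma^{l+1}_j|$. This gives a contribution to $\|y^{n+1}\|+\|y^n\|$ of at most $2\|\Gamma^{l+1}\|$. Summing over $l=2,\dots,n$ yields the stated estimate.

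The main obstacle is the homogeneous step. One must see that the root $\lambda_j$ can approach $-1$ for large $\mu_j$, so no estimate uniform in $\|y^1\|$ and $\|y^2\|$ alone can hold. The factorization $1+\lambda_j=2/\sigma_j$ is exactly what converts this degeneracy into the weight $S_h$ on the initial data, and the remaining work is bookkeeping of constants.
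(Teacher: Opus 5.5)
The paper gives no proof of this lemma; it is quoted from \cite{liu2026structure}, so your argument has to stand on its own. It does, up to one unfinished step. The diagonalization is legitimate: $-\Delta_h$ is Hermitian and nonnegative on $\mathbf{V}_h$, and $S_h$, $T_h$, $B_h$ are functions of it. The factorization $(r-\lambda_j)(r+1)$ is correct. The key identity $1+\lambda_j=2/\sigma_j$, i.e. $\mathbf{I}_h+B_h=2S_h^{-1}$, is exactly what puts the weight $S_h$ on the starting values. Your Duhamel treatment of the forcing is also correct. The substitution $w^n=y^n+y^{n-1}$ is not a dead end either. If you unwind $y^{n+1}=w^{n+1}-y^n$ alternately and sum the geometric series in $-B_h$ using $(\mathbf{I}_h+B_h)^{-1}=\tfrac12 S_h$, you get an eigenvector-free version of the same computation.

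The gap is the constant, and your proposed remedy would not close it. Your bound $|b_j|\le\tfrac12|\sigma_j y^1_j|+\tfrac12|\sigma_j y^2_j|+|y^1_j|$ only gives $\|y^n\|\le\|S_hy^1\|+\|S_hy^2\|+\|y^1\|$. The final estimate then carries up to $4\|S_hy^1\|$ instead of $2\|S_hy^1\|$. Pairing consecutive indices does not help, because the loss sits in the bound for $b_j$ itself. The fix is to evaluate $b_j$ exactly using $\sigma_j+\bar\sigma_j=2$:
\[
b_j=a_j\lambda_j-y^1_j=\tfrac12\sigma_j\bigl(y^1_j+y^2_j\bigr)-y^1_j=\tfrac12\bigl(\sigma_j y^2_j-\bar\sigma_j y^1_j\bigr),
\]
so $|b_j|\le\tfrac12\bigl(|\sigma_jy^1_j|+|\sigma_jy^2_j|\bigr)$. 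Equivalently, the homogeneous solution is the operator identity
\[
y^n=\tfrac12 B_h^{\,n-1}S_h\bigl(y^1+y^2\bigr)+\tfrac{(-1)^n}{2}\bigl(S_hy^2-T_hy^1\bigr),\qquad n\ge 1 .
\]
You can check it by substitution, using that the operators commute, $S_h+T_h=2\mathbf{I}_h$ and $B_hS_h=T_h$. Then $\|B_h\|\le 1$ and $\|T_hv\|=\|S_hv\|$ (since $|\bar\sigma_j|=|\sigma_j|$) give $\|y^n\|\le\|S_hy^1\|+\|S_hy^2\|$ for every $n$. This yields exactly the constant $2$.

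The forcing needs the same precision. With data $\bigl(0,S_h^{-1}\Gamma^{l+1}\bigr)$ at indices $l,l+1$, both modal coefficients have modulus exactly $\tfrac12|\Gamma^{l+1}_j|$. Your stated bound $\le|\Gamma^{l+1}_j|$ would double the constant. With the exact value, each Duhamel term has norm at most $\|\Gamma^{l+1}\|$ at every later index. It therefore contributes at most $2\|\Gamma^{l+1}\|$ to $\|y^{n+1}\|+\|y^n\|$, as the lemma requires.
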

	\begin{lemma} \cite{liu2026structure} \label{lemma2}
		Let $v_a, v_b, z_a, z_b: \Omega \rightarrow \mathbb{C}$ and $S(v_a,v_b,z_a,z_b):=\vert v_a \vert^2 - \vert v_b \vert^2 - \vert z_a \vert^2 + \vert z_b \vert^2$. Then it holds that
		\begin{align}
			\begin{aligned}
				\Vert S(v_a, v_b, z_a, z_b) \Vert \le &2 \Vert z_a - z_b \Vert_{\infty} \Vert v_b - z_b \Vert  + K(v_a, v_b, z_a, z_b)\Vert v_a - v_b -z_a + z_b \Vert,
			\end{aligned}
		\end{align}
		where $K(v_a, v_b, z_a, z_b)=\Vert v_a \Vert_{\infty} + \Vert v_b \Vert_{\infty} + \Vert z_a - z_b \Vert_{\infty}$. 
	\end{lemma}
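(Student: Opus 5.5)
The plan is to prove the bound pointwise in $\Omega$ and then take $L^2$-norms, using H\"older's inequality in the form $\Vert fg\Vert \le \Vert f\Vert_\infty \Vert g\Vert$. The key device is to write each difference of squared moduli as the real part of a product of a difference and a sum. For complex numbers $a,b$ one has $|a|^2-|b|^2=\text{Re}\big((a-b)(a+b)^*\big)$, since the cross terms $-ab^*+ba^*$ are purely imaginary. Applying this to both pairs gives
\begin{equation*}
S(v_a,v_b,z_a,z_b)=\text{Re}\Big((v_a-v_b)(v_a+v_b)^* - (z_a-z_b)(z_a+z_b)^*\Big).
\end{equation*}

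Next I will introduce $e:=v_a-v_b-z_a+z_b$ and substitute $v_a-v_b=e+(z_a-z_b)$ in the first product. This yields
\begin{equation*}
(v_a-v_b)(v_a+v_b)^* - (z_a-z_b)(z_a+z_b)^* = e\,(v_a+v_b)^* + (z_a-z_b)\,(v_a+v_b-z_a-z_b)^*.
\end{equation*}
The one step requiring care is rewriting the last factor so that only $e$ and $v_b-z_b$ appear. Indeed,
\begin{equation*}
v_a+v_b-z_a-z_b=(v_a-v_b-z_a+z_b)+2(v_b-z_b)=e+2(v_b-z_b),
\end{equation*}
and hence
\begin{equation*}
S=\text{Re}\Big(e\,(v_a+v_b)^* + (z_a-z_b)\,e^* + 2(z_a-z_b)(v_b-z_b)^*\Big).
\end{equation*}

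Taking moduli pointwise then gives
\begin{equation*}
|S|\le \big(|v_a|+|v_b|+|z_a-z_b|\big)|e| + 2|z_a-z_b|\,|v_b-z_b|.
\end{equation*}
Finally I will take the $L^2(\Omega)$-norm and apply the triangle inequality. The factors $|v_a|$, $|v_b|$ and $|z_a-z_b|$ are bounded by their $L^\infty$-norms, and the remaining $L^2$-norms are $\Vert e\Vert$ and $\Vert v_b-z_b\Vert$. This gives exactly $\Vert S\Vert \le 2\Vert z_a-z_b\Vert_\infty\Vert v_b-z_b\Vert + K(v_a,v_b,z_a,z_b)\Vert e\Vert$. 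The only real obstacle is choosing the algebraic regrouping above so that the constant in front of $\Vert v_b-z_b\Vert$ involves only $z_a-z_b$; the rest is routine.
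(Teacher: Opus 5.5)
Your proof is correct and complete. The identity $|a|^2-|b|^2=\mathrm{Re}\big((a-b)(a+b)^*\big)$, the substitution $v_a-v_b=e+(z_a-z_b)$, and the regrouping $v_a+v_b-z_a-z_b=e+2(v_b-z_b)$ together give the pointwise bound $|S|\le(|v_a|+|v_b|+|z_a-z_b|)\,|e|+2|z_a-z_b|\,|v_b-z_b|$, and the stated $L^2$ estimate then follows from $\Vert fg\Vert\le\Vert f\Vert_\infty\Vert g\Vert$ and the triangle inequality.

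The paper gives no proof of its own for this lemma; it quotes it from \cite{liu2026structure}. Your argument is therefore a self-contained verification, and nothing is missing.
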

	To facilitate error estimation, we impose the following regularity assumptions on the exact solutions $\psi_+,\psi_-,\phi,Z_+ $ and $Z_-$.
	\begin{align} \label{regularity assumption}
		\begin{aligned}
			& \psi_{\pm}, \psi_{\pm,t} \in L^{\infty}(0,T;\mathbf{H}^{k+1}(\Omega)), \quad 
			& Z_{\pm}, Z_{\pm,t}, \phi \in L^{\infty}(0,T;H^{k+1}(\Omega)),  \\ & \psi_{\pm,tt} \in L^{\infty}(0,T;\mathbf{H}^{2}(\Omega)), \quad & Z_{\pm,tt} \in L^{\infty}(0,T;H^{2}(\Omega)), \\ 
			& \psi_{\pm,ttt},\psi_{\pm,tttt} \in L^{\infty}(0,T;\mathbf{L}^{2}(\Omega)), \quad & Z_{\pm,ttt} \in L^{\infty}(0,T;L^{2}(\Omega)).
		\end{aligned} 
	\end{align}
	Based on the regularity assumption \eqref{regularity assumption} and the property \eqref{boundedness of Ritz}, we can demonstrate the boundedness of the exact solution and its Ritz projection
	\begin{align}
		\begin{aligned}
			&\Vert \psi_+^n \Vert_{\infty}, \Vert \psi_-^n \Vert_{\infty} \le C_{\psi}, \quad \Vert Z_+^{n-\frac12} \Vert_{\infty}, \Vert Z_-^{n-\frac12} \Vert_{\infty} \le C_{Z}, \quad  \Vert \phi^{n-\frac12} \Vert_{\infty} \le C_{\phi}, \\
			&\Vert R_h \psi_+^n \Vert_{\infty}, \Vert R_h \psi_-^n \Vert_{\infty} \le D_{\psi}, \quad \Vert R_h Z_+^{n-\frac12} \Vert_{\infty}, \Vert R_h Z_-^{n-\frac12} \Vert_{\infty} \le D_{Z}, \quad  \Vert R_h \phi^{n-\frac12} \Vert_{\infty} \le D_{\phi}, \\        
		\end{aligned} \label{boundedness of the true solution}
	\end{align}
	where the constants 
	\begin{align}
		\begin{aligned}
			&C_{\psi} = \sup_{0\le n \le N} \Vert \psi_{\pm}^n \Vert_{\infty}, \quad C_{Z} = \sup_{0\le n \le N} \Vert Z_{\pm}^{n-1/2} \Vert_{\infty}, \quad  C_{\phi} = \sup_{0\le n \le N} \Vert \phi^{n-1/2} \Vert_{\infty}, \\
			&D_{\psi} = \sup_{0\le n \le N} \Vert R_h \psi_{\pm}^n \Vert_{\infty}, \quad D_{Z} = \sup_{0\le n \le N} \Vert R_h Z_{\pm}^{n-1/2} \Vert_{\infty}, \quad  D_{\phi} = \sup_{0\le n \le N} \Vert R_h \phi^{n-1/2} \Vert_{\infty}. \\        
		\end{aligned}
	\end{align}
	where $\sup \Vert f_{\pm} \Vert_{\infty}:=\max \left\{\sup \Vert f_{+} \Vert_{\infty}, \sup \Vert f_{-} \Vert_{\infty}\right\}$. \par
	Note that the exact solution of \eqref{Equiv TargetEq} satisfies 
	\begin{subequations}
		\begin{align}
			&{\left\langle {\mathbf{i}{D_\tau }\psi _ + ^{n + 1},v_+} \right\rangle = \frac{1}{2}{A_0}\left( {\bar \psi _ + ^{n + \frac{1}{2}},v_+} \right) + \left\langle \left( {gZ_ + ^{n + \frac{1}{2}} + GZ_ - ^{n + \frac{1}{2}} + q\phi _{}^{n + \frac{1}{2}}} \right)\bar \psi _ + ^{n + \frac{1}{2}}+ {R_{1, + }^{n + 1},v_+} \right\rangle},\\
			&{\left\langle {\mathbf{i}{D_\tau }\psi _ - ^{n + 1},v_-} \right\rangle = \frac{1}{2}{A_0}\left( {\bar \psi _ - ^{n + \frac{1}{2}},v_-} \right) + \left\langle \left( {gZ_ - ^{n + \frac{1}{2}} + GZ_ + ^{n + \frac{1}{2}} - q\phi _{}^{n + \frac{1}{2}}} \right)\bar \psi _ - ^{n + \frac{1}{2}}+ {R_{1, - }^{n + 1},v_-} \right\rangle},\\
			&{{A_1}\left( {\phi _{}^{n + \frac{1}{2}},w} \right) = \left( {4\pi qZ_ + ^{n + \frac{1}{2}},w} \right) - \left( {4\pi qZ_ - ^{n + \frac{1}{2}},w} \right)},\\
			&{\left( {Z_ + ^{n + \frac{1}{2}} + Z_ + ^{n - \frac{1}{2}},{\chi _ + }} \right) = \left( {S_{1, + }^n,{\chi _ + }} \right) + 2\left( {{{\left| {\psi _ + ^n} \right|}^2},{\chi _ + }} \right)},\\
			&{\left( {Z_ - ^{n + \frac{1}{2}} + Z_ + ^{n - \frac{1}{2}},{\chi _ - }} \right) = \left( {S_{1, - }^n,{\chi _ - }} \right) + 2\left( {{{\left| {\psi _ - ^n} \right|}^2},{\chi _ - }} \right)},
		\end{align}\label{HalfDSwithTylor}
	\end{subequations}
	with remainder terms resulting from the Temporal discretization
	\begin{align}
		&S_{1, + }^n = Z_ + ^{n + \frac{1}{2}} + Z_ + ^{n - \frac{1}{2}} - 2Z_ + ^n,
		\\
		&S_{1, - }^n = Z_ - ^{n + \frac{1}{2}} + Z_ - ^{n - \frac{1}{2}} - 2Z_ - ^n,\\
		&\begin{aligned}
			R_{1, + }^{n + 1} &=  - \mathbf{i}\left( {{\partial _t}\psi _ + ^{n + \frac{1}{2}} - {D_\tau }\psi _ + ^{n + 1}} \right) + \frac{1}{2}\Delta \left( {\bar \psi _ + ^{n + \frac{1}{2}} - \psi _ + ^{n + \frac{1}{2}}} \right) \\ &+ \left( {gZ_ + ^{n + \frac{1}{2}} + GZ_ - ^{n + \frac{1}{2}} + q\phi _{}^{n + \frac{1}{2}}} \right)\left( {\psi _ + ^{n + \frac{1}{2}} - \bar \psi _ + ^{n + \frac{1}{2}}} \right),
		\end{aligned}\\
		&\begin{aligned}
			R_{1, - }^{n + 1} &=  - \mathbf{i}\left( {{\partial _t}\psi _ - ^{n + \frac{1}{2}} - {D_\tau }\psi _ - ^{n + 1}} \right) + \frac{1}{2}\Delta \left( {\bar \psi _ - ^{n + \frac{1}{2}} - \psi _ - ^{n + \frac{1}{2}}} \right) \\ &+ \left( {gZ_ - ^{n + \frac{1}{2}} + GZ_ + ^{n + \frac{1}{2}} - q\phi _{}^{n + \frac{1}{2}}} \right)\left( {\psi _ - ^{n + \frac{1}{2}} - \bar \psi _ - ^{n + \frac{1}{2}}} \right).
		\end{aligned}
	\end{align}    
	We define the following notations
	\begin{align*}
		&e_{\psi , + }^{n + 1} = \psi _ + ^{n + 1} - \psi _{+,h }^{n + 1},\quad e_{Z, + }^{n + \frac{1}{2}} = Z_ + ^{n + \frac{1}{2}} - Z_{+,h }^{n + \frac{1}{2}},\quad e_\phi ^{n + \frac{1}{2}} = \phi ^{n + \frac{1}{2}} - \phi _{h }^{n + \frac{1}{2}}, \\ 
		&e_{\psi , - }^{n + 1} = \psi _ - ^{n + 1} - \psi _{-,h }^{n + 1},\quad e_{Z, - }^{n + \frac{1}{2}} = Z_ - ^{n + \frac{1}{2}} - Z_{-,h }^{n + \frac{1}{2}}.
	\end{align*}
	By subtracting the fully discrete scheme \eqref{FullDS} from \eqref{HalfDSwithTylor}, we obtain
	\begin{subequations}
		{\begin{align}
				&{\left\langle {\mathbf{i}{D_\tau }e_{\psi , + }^{n + 1},v_{+,h}} \right\rangle = \frac{1}{2}{A_0}\left( {\bar e_{\psi , + }^{n + \frac{1}{2}},v_{+,h}} \right) + \left\langle {J_{1, + }^{n + 1},v_{+,h}} \right\rangle + \left\langle {R_{1, + }^{n + 1},v_{+,h}} \right\rangle}, \label{ErrorEq1}\\
				&{\left\langle {\mathbf{i}{D_\tau }e_{\psi , - }^{n + 1},v_{-,h}} \right\rangle = \frac{1}{2}{A_0}\left( {\bar e_{\psi , - }^{n + \frac{1}{2}},v_{-,h}} \right) + \left\langle {J_{1, - }^{n + 1},v_{-,h}} \right\rangle + \left\langle {R_{1, - }^{n + 1},v_{-,h}} \right\rangle}, \label{ErrorEq2}\\
				&{{A_1}\left( {e_\phi ^{n + \frac{1}{2}},{w_h}} \right) = \left( {4\pi qe_{Z, + }^{n + \frac{1}{2}},{w_h}} \right) - \left( {4\pi qe_{Z, - }^{n + \frac{1}{2}},{w_h}} \right)}, \label{ErrorEq3}\\
				&{\left( {e_{Z, + }^{n + \frac{1}{2}} + e_{Z, + }^{n - \frac{1}{2}},{\chi _{+,h }}} \right) = \left( {S_{1, + }^n,{\chi _{+,h }}} \right) + \left( {T_{1, + }^n,{\chi _{+,h }}} \right)}, \label{ErrorEq4}\\
				&{\left( {e_{Z, - }^{n + \frac{1}{2}} + e_{Z, - }^{n - \frac{1}{2}},{\chi _{-,h }}} \right) = \left( {S_{1, - }^n,{\chi _{-,h }}} \right) + \left( {T_{1, - }^n,{\chi _{-,h }}} \right)}, \label{ErrorEq5}
		\end{align}}\label{ErrorEq}
	\end{subequations}
	where 
	\begin{align}
		&T_{1, + }^n = 2{\left| {\psi _ + ^n} \right|^2} - 2{\left| {\psi _{+,h }^n} \right|^2},\label{defT+} \\
		&T_{1, - }^n = 2{\left| {\psi _ - ^n} \right|^2} - 2{\left| {\psi _{-,h }^n} \right|^2},\\
		&J_{1, + }^{n + 1} = \left( {gZ_ + ^{n + \frac{1}{2}} + GZ_ - ^{n + \frac{1}{2}} + q\phi _{}^{n + \frac{1}{2}}} \right)\bar \psi _ + ^{n + \frac{1}{2}} - \left( {gZ_{+,h }^{n + \frac{1}{2}} + GZ_{-,h }^{n + \frac{1}{2}} + q\phi _h^{n + \frac{1}{2}}} \right)\bar \psi _{+,h }^{n + \frac{1}{2}},\\
		&J_{1, - }^{n + 1} = \left( {gZ_ - ^{n + \frac{1}{2}} + GZ_ + ^{n + \frac{1}{2}} - q\phi _{}^{n + \frac{1}{2}}} \right)\bar \psi _ - ^{n + \frac{1}{2}} - \left( {gZ_{-,h }^{n + \frac{1}{2}} + GZ_{+,h }^{n + \frac{1}{2}} - q\phi _h^{n + \frac{1}{2}}} \right)\bar \psi _{-,h }^{n + \frac{1}{2}}.
	\end{align}
	We further split the errors as
	\begin{align}
		&e_{\psi , \pm }^{n + 1} = \left( {\psi _ \pm ^{n + 1} - {R_h}\psi _ \pm ^{n + 1}} \right) + \left( {{R_h}\psi _ \pm ^{n + 1} - \psi _{h, \pm }^{n + 1}} \right) := \xi _{\psi , \pm }^{n + 1} + \eta _{\psi , \pm }^{n + 1},\\
		&e_{Z, \pm }^{n + \frac{1}{2}} = \left( {Z_ \pm ^{n + \frac{1}{2}} - {R_h}Z_ \pm ^{n + \frac{1}{2}}} \right) + \left( {{R_h}Z_ \pm ^{n + \frac{1}{2}} - Z_{h, \pm }^{n + \frac{1}{2}}} \right) := \xi _{Z, \pm }^{n + \frac{1}{2}} + \eta _{Z, \pm }^{n + \frac{1}{2}},\\
		&e_\phi ^{n + \frac{1}{2}} = \left( {\phi ^{n + \frac{1}{2}} - {R_h}\phi ^{n + \frac{1}{2}}} \right) + \left( {{R_h}\phi ^{n + \frac{1}{2}} - \phi _{h}^{n + \frac{1}{2}}} \right) := \xi _\phi ^{n + \frac{1}{2}} + \eta _\phi ^{n + \frac{1}{2}}.
	\end{align}
	By the decomposition of errors and the Ritz projection, we write \eqref{ErrorEq} as follows:
	\begin{subequations}
		{\begin{align}
				&{\left\langle {\mathbf{i}{D_\tau }\eta _{\psi , + }^{n + 1},v_{+,h}} \right\rangle = \frac{1}{2}{A_0}\left( {\bar \eta _{\psi , + }^{n + \frac{1}{2}},v_{+,h}} \right) + \left\langle {J_{1, + }^{n + 1},v_{+,h}} \right\rangle + \left\langle {R_{2, + }^{n + 1},v_{+,h}} \right\rangle}, \label{deErrorEq1} \\
				&{\left\langle {\mathbf{i}{D_\tau }\eta _{\psi , - }^{n + 1},v_{-,h}} \right\rangle = \frac{1}{2}{A_0}\left( {\bar \eta _{\psi , - }^{n + \frac{1}{2}},v_{-,h}} \right) + \left\langle {J_{1, - }^{n + 1},v_{-,h}} \right\rangle + \left\langle {R_{2, - }^{n + 1},v_{-,h}} \right\rangle}, \label{deErrorEq2} \\
				&{{A_1}\left( {\eta _\phi ^{n + \frac{1}{2}},{w_h}} \right) = \left( {4\pi q\eta _{Z, + }^{n + \frac{1}{2}},{w_h}} \right) - \left( {4\pi q\eta _{Z, - }^{n + \frac{1}{2}},{w_h}} \right) + \left( {{R_3},{w_h}} \right)}, \label{deErrorEq3} \\
				&{\left( {\eta _{Z, + }^{n + \frac{1}{2}} + \eta _{Z, + }^{n - \frac{1}{2}},{\chi _{+,h }}} \right) = \left( {S_{2, + }^n,{\chi _{+,h }}} \right) + \left( {T_{1, + }^n,{\chi _{+,h }}} \right)}, \label{deErrorEq4} \\
				&{\left( {\eta _{Z, - }^{n + \frac{1}{2}} + \eta _{Z, - }^{n - \frac{1}{2}},{\chi _{-,h }}} \right) = \left( {S_{2, - }^n,{\chi _{-,h }}} \right) + \left( {T_{1, - }^n,{\chi _{-,h }}} \right)}, \label{deErrorEq5}
		\end{align}} \label{deErrorEq}
	\end{subequations}
	where
	\begin{equation*}
		S_{2, \pm }^n = S_{1, \pm }^n - \left( {\xi _{Z, \pm }^{n + \frac{1}{2}} + \xi _{Z, \pm }^{n - \frac{1}{2}}} \right),   
		\quad   R_{2, \pm }^{n + 1} = R_{1, \pm }^{n + 1} - \mathbf{i}{D_\tau }\xi _{\psi , \pm }^{n + 1},
		\quad   R_3^{n + \frac{1}{2}} = 4\pi q\left( {\xi _{Z, + }^{n + \frac{1}{2}} - \xi _{Z, - }^{n + \frac{1}{2}}} \right). 
	\end{equation*}
	By applying the Taylor expansion and the error estimate for interpolation, it holds 
	\begin{align}
		&S_{2, + }^n \leq C(\tau^2 + h^{k+1}), \label{S2+}\\
		&S_{2, - }^n \leq C(\tau^2 + h^{k+1}), \label{S2-}\\    
		&R_{2, + }^{n + 1} \leq C(\tau^2 + h^{k+1}),\label{R2+} \\ 
		&R_{2, - }^{n + 1} \leq C(\tau^2 + h^{k+1}),\label{R2-} \\
		&R_3^{n + \frac{1}{2}} \leq Ch^{k+1}.
	\end{align}
	
	\begin{theorem}\label{TH error estimate}
		Suppose that $\psi_\pm$, $Z_\pm$, and $\phi$ satisfy the regularity assumptions stated in \eqref{regularity assumption}. Assume that $\tau \le C h$. Then there exist positive constants $\tau_0$ and $h_0$ such that, for $\tau \le \tau_0$ and $h \le h_0$, the solution of the fully discrete scheme \eqref{FullDS} satisfies
		\begin{align}
			&\mathop {\max }\limits_{0 \le n \le N} \left\| {e_{\psi , \pm }^{n}} \right\|  \le C\left( {{\tau ^2} + {h^{k + 1}}} \right), \label{TH3.7 psi}\\
			&\mathop {\max }\limits_{0 \le n \le N-1} \left\| {e_{Z, \pm }^{n + \frac{1}{2}}} \right\|  \le C\left( {{\tau ^2} + {h^{k + 1}}} \right),\label{TH3.7 Z}\\
			&\mathop {\max }\limits_{0 \le n \le N-1} \left\| {e_\phi ^{n + \frac{1}{2}}} \right\| \le C\left( {{\tau ^2} + {h^{k + 1}}} \right).\label{TH3.7 phi}
		\end{align}
	\end{theorem}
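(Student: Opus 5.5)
We argue by mathematical induction on $n$, working with the projection errors $\eta_{\psi,\pm}^{n}$, $\eta_{Z,\pm}^{n-\frac12}$, $\eta_\phi^{n-\frac12}$ from the splitting introduced above. The $\xi$-parts are already $O(h^{k+1})$ by \eqref{Ritz projection error} and \eqref{Ritz projection error complex}, so it suffices to show $\|\eta_{\psi,\pm}^{n}\|+\|\eta_{Z,\pm}^{n-\frac12}\|+\|\eta_\phi^{n-\frac12}\|\le C_0(\tau^2+h^{k+1})$.

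The induction hypothesis carries this bound up to level $n$ together with a uniform $L^\infty$ bound on the discrete solutions:
\[
\|\psi_{\pm,h}^m\|_\infty\le D_\psi+1,\qquad \|Z_{\pm,h}^{m-\frac12}\|_\infty\le D_Z+1,\qquad \|\phi_h^{m-\frac12}\|_\infty\le D_\phi+1 .
\]
The $L^\infty$ bound is recovered at level $n+1$ from the $L^2$ bound via the inverse inequality \eqref{inverse inequality} and \eqref{boundedness of the true solution}:
\[
\|\eta^{n+1}\|_\infty\le Ch^{-d/2}(\tau^2+h^{k+1})\le 1 ,
\]
for $h\le h_0$, $\tau\le\tau_0$. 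This uses $\tau\le Ch$ and $d\le3$, so that $\tau^2h^{-d/2}\le Ch^{2-3/2}\to0$; it is presumably the only place the step-size coupling $\tau\le Ch$ is needed.

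The base steps are treated directly:
\begin{itemize}
\item $\eta_{\psi,\pm}^0$ and $\eta_{Z,\pm}^{-\frac12}$ are $O(h^{k+1})$ by \eqref{feminit}, since $L^2$-projection errors are compared with Ritz projection errors. There is also an $O(\tau^2)$ consistency term, coming from $Z^{-\frac12}$ versus $|\psi_0|^2$, which must be checked or absorbed.
\item The first step or two, $y^1,y^2$, are estimated by a crude one-step energy argument, because the recursion of Lemma \ref{lemma: sequence lemma} is stated for $n\ge2$.
\end{itemize}

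For the induction step we proceed in the order of Algorithm \ref{alg:CNFEM}.

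\emph{Step 1: $Z$-errors.} Testing \eqref{deErrorEq4}--\eqref{deErrorEq5} with $\chi=\eta_{Z,\pm}^{n+\frac12}$ would only give $\|\eta_Z^{n+\frac12}\|\le\|\eta_Z^{n-\frac12}\|+\dots$, which accumulates badly. Instead we use the recursion itself: $\eta_Z^{n+\frac12}=-\eta_Z^{n-\frac12}+P_h(S_2^n+T_1^n)$. The bound on $T_{1,\pm}^n=2|\psi^n|^2-2|\psi_h^n|^2$ follows from Lemma \ref{lemma2} together with the $L^\infty$ bounds:
\[
\|T_{1,\pm}^n\|\le C\|e_{\psi,\pm}^n\|\le C(\tau^2+h^{k+1}+\|\eta_{\psi,\pm}^n\|).
\]
Since the alternating sum of $\eta_Z$ telescopes poorly, we plan to express $\eta_Z^{n+\frac12}$ in terms of $\eta_\psi^n$ and $\eta_\psi^{n-1}$: subtracting consecutive levels of \eqref{deErrorEq4} gives $\eta_Z^{n+\frac12}-\eta_Z^{n-\frac32}$ controlled by differences of $T$, which are $O(\tau)$ times error quantities. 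Alternatively we estimate $\eta_Z^{n+\frac12}$ directly from $\eta_\psi^n$ plus $\eta_Z$ at the \emph{initial} level along even/odd chains, each step contributing only $O(\tau(\dots))$.

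\emph{Step 2: $\phi$-error.} We apply Lemma \ref{propties of A1} to \eqref{deErrorEq3}, obtaining
\[
\|\eta_\phi^{n+\frac12}\|\le C\big(\|\eta_{Z,+}^{n+\frac12}\|+\|\eta_{Z,-}^{n+\frac12}\|+h^{k+1}\big).
\]

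\emph{Step 3: $\psi$-errors.} We rewrite \eqref{deErrorEq1} in operator form,
\[
S_h\eta_\psi^{n+1}=T_h\eta_\psi^n-\mathbf{i}\tau P_h\big(J_1^{n+1}+R_2^{n+1}\big).
\]
We split $J_{1,+}^{n+1}$ into (a) bounded coefficients times $\bar e_{\psi,+}^{n+\frac12}$ and (b) $(ge_{Z,+}+Ge_{Z,-}+qe_\phi)\bar\psi_{+,h}^{n+\frac12}$. Using the $L^\infty$ hypotheses at levels $n$ and $n+1$, this gives $\|J_1^{n+1}\|\le C(\|\eta_\psi^{n+1}\|+\|\eta_\psi^n\|+\|\eta_Z^{n+\frac12}\|+\|\eta_\phi^{n+\frac12}\|+h^{k+1})$. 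Note that the level-$(n+1)$ $L^\infty$ bound must first be obtained by a preliminary crude estimate, or avoided by moving the term $\bar\psi_{+,h}^{n+\frac12}\,e$ onto known quantities.

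\emph{Step 4: Gronwall.} Taking the imaginary part after testing with $\bar\eta_\psi^{n+\frac12}$ gives
\[
\|\eta_\psi^{n+1}\|^2-\|\eta_\psi^n\|^2\le C\tau\big(\|\eta_\psi^{n+1}\|^2+\|\eta_\psi^n\|^2+\|\eta_Z^{n+\frac12}\|^2+\|\eta_\phi^{n+\frac12}\|^2+(\tau^2+h^{k+1})^2\big).
\]
Feeding in Steps 1--2 and summing, Lemma \ref{Gronwall’s inequality} closes the $L^2$ bound with a constant $C_0$ independent of $n$. Lemmas \ref{lemma: B} and \ref{lemma: sequence lemma} give an alternative route, should the three-level structure force the recursion form $y^{n+1}=(B_h-\mathbf{I}_h)y^n+B_hy^{n-1}+S_h^{-1}\Gamma^{n+1}$.

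The main obstacle is Step 1. Because $Z^{n+\frac12}+Z^{n-\frac12}=2|\psi^n|^2$ has amplification $-1$, a naive bound lets $\|\eta_Z\|$ accumulate $\sum_n\|T_1^n\|$ without a factor $\tau$, which would lose a full order. We will first try to combine the $\psi$- and $Z$-errors into the three-level sequence handled by Lemma \ref{lemma: sequence lemma}, tracking $\eta_Z^{n+\frac12}-2\,\mathrm{Re}(\dots)$ differences so that only $O(\tau)$ increments appear. If that fails, we will mimic the energy-identity manipulation of Lemma \ref{lemma: Des Mass and Energy}, testing the differenced $Z$-equation against $\eta_Z$ so that cross terms cancel as in $D_2^n$.
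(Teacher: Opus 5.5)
\textbf{The gap: you never control the discrete time derivative of the $\psi$-error.} You correctly locate the obstruction: the $Z$-recursion has amplification $-1$, so $\eta_Z$ can only be controlled through the differenced equation for $\eta_{Z,\pm}^{n+\frac12}-\eta_{Z,\pm}^{n-\frac32}$, summed along even/odd chains. But you do not supply what makes each chain step gain a factor $\tau$. With the $L^\infty$ bounds, Lemma \ref{lemma2} gives
\[
\|T_{1,\pm}^{n}-T_{1,\pm}^{n-1}\|\le C\tau\|e_{\psi,\pm}^{n}\|+C\tau\|D_\tau e_{\psi,\pm}^{n}\| .
\]
So a chain step is $O(\tau(\tau^2+h^{k+1}))$ only if $\|D_\tau\eta_{\psi,\pm}^{n}\|\le C(\tau^2+h^{k+1})$ uniformly in $n$. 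Nothing in your Steps 3--4 yields this:
\begin{itemize}
\item The $L^2$ energy argument controls only $\|\eta_\psi^n\|$.
\item The bound $\|\eta_\psi^n-\eta_\psi^{n-1}\|\le\|\eta_\psi^n\|+\|\eta_\psi^{n-1}\|$ loses the factor $\tau$, and the $O(1/\tau)$ chain steps then lose a full order.
\item Reading $D_\tau\eta_\psi^{n+1}$ off \eqref{deErrorEq1} brings in $\Delta_h\bar\eta_\psi^{n+\frac12}$, which costs $h^{-2}$.
\end{itemize}

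This missing estimate is exactly the paper's key device. The paper carries $\|D_\tau\eta_{\psi,\pm}^n\|$ in the induction hypothesis. It subtracts \eqref{deErrorEq1} at levels $n+1$ and $n-1$, so the Laplacian acts on $\tau(D_\tau\eta_\psi^{n+1}+2D_\tau\eta_\psi^n+D_\tau\eta_\psi^{n-1})$. It rewrites the result as $S_hD_\tau\eta_\psi^{n+1}=(T_h-S_h)D_\tau\eta_\psi^n+T_hD_\tau\eta_\psi^{n-1}+\Gamma^{n+1}$ and applies Lemma \ref{lemma: sequence lemma}, using the contractivity of Lemma \ref{lemma: B}, to the sequence $y^n=D_\tau\eta_\psi^n$. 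The forcing $\Gamma^{n+1}$ is built from $R_2^{n+1}-R_2^{n-1}$ and $J_1^{n+1}-J_1^{n-1}$, both $O(\tau)$ times error quantities. Bounding $J_1^{n+1}-J_1^{n-1}$ itself uses the differenced $Z$-estimate, so the bounds on $\eta_\psi$, $D_\tau\eta_\psi$ and $\eta_Z$ are propagated jointly and closed by Gronwall.

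So the three-level lemma is not an optional alternative route. It is applied to $D_\tau\eta_\psi$ alone, and the $Z$-errors enter only through $\Gamma$, not as part of a combined $\psi$/$Z$ sequence. Your energy-identity fallback cannot replace it either. The modified energy contains the indefinite products $(\eta_Z^{n+\frac12},\eta_Z^{n-\frac12})$, and $g,G$ have no sign, so it gives no control of $\|\eta_Z\|$.

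\textbf{A second problem: the initialization error is $O(\tau)$, not $O(\tau^2)$, and cannot be absorbed.} By \eqref{feminit}, $Z_{\pm,h}^{\frac12}$ equals the $L^2$ projection of $|\psi_{\pm,0}|^2$ up to $O(h^{k+1})$. On the other hand, $Z_\pm(t_{\frac12})=|\psi_{\pm,0}|^2+\frac{\tau}{2}\partial_t|\psi_\pm|^2(0)+O(\tau^2)$. Because of the amplification $-1$, this error is never damped: $e_{Z,\pm}^{n+\frac12}$ carries the mode $(-1)^n\frac{\tau}{2}\partial_t|\psi_\pm|^2(0)$ for every $n$. Your Step 4 feeds $\|\eta_Z\|^2$ directly into Gronwall, so it would then prove only first order for $\psi$ as well.

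The paper does not resolve this; its Step 1 simply treats $\|\eta_{Z,+}^{-\frac12}\|$ as $O(h^{k+1})$. To obtain \eqref{TH3.7 Z}--\eqref{TH3.7 phi} at order $\tau^2$ you need one of two things. Either $\partial_t|\psi_\pm|^2(0)=0$, which holds in all the numerical examples, or a second-order accurate initialization of $Z_{\pm,h}^{-\frac12}$.
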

	The proof of \Cref{TH error estimate} will be presented in \Cref{Appendix}.
	
	\begin{remark}
		For the coupled GPE \eqref{coulped NLS equation}, the proposed relaxation Crank–Nicolson finite element method reduces to find $(\psi_{+,h}^{n+1},\psi_{-,h}^{n+1},Z_{+,h}^{n+\frac{1}{2}},Z_{-,h}^{n+\frac{1}{2}}) \in \mathbf{V}_{h}\times \mathbf{V}_{h}\times V_{h}\times V_{h}$ such that 
		\begin{subequations}\label{FullDS-}
			\begin{align}
				&\left\langle \mathbf{i} D_{\tau} \psi_{+,h}^{n+1}, v_{+,h} \right\rangle =\frac{1}{2}A_0 \left( \bar\psi_{+,h}^{n+\frac{1}{2}}, v_{+,h} \right)  + \left\langle \left(g Z_{+,h}^{n+\frac{1}{2}} +  G Z_{-,h}^{n+\frac{1}{2}}  \right) \bar  \psi_{+,h}^{n+\frac{1}{2}} , v_{+,h} \right\rangle,  \label{FullDS1-} \\
				&\left\langle \mathbf{i} D_{\tau} \psi_{-,h}^{n+1}, v_{-,h} \right\rangle = \frac{1}{2}A_0 \left(  \bar\psi_{-,h}^{n+\frac{1}{2}}, v_{-,h} \right) + \left\langle \left( g Z_{-,h}^{n+\frac{1}{2}} + G Z_{+,h}^{n+\frac{1}{2}}  \right) \bar \psi_{-,h}^{n+\frac{1}{2}} , v_{-,h} \right\rangle,  \label{FullDS2-}\\ 
				&\left( Z_{+,h}^{n+\frac{1}{2}} + Z_{+,h}^{n-\frac{1}{2}} , \chi_{+,h} \right) = \left( 2 \vert {\psi_{+,h}^n} \vert ^2, \chi_{+,h} \right),  \label{FullDS4-} \\
				&\left( Z_{-,h}^{n+\frac{1}{2}} + Z_{-,h}^{n-\frac{1}{2}} , \chi_{-,h} \right) = \left( 2 \vert {\psi_{-,h}^n} \vert ^2, \chi_{-,h} \right),  \label{FullDS5-}
			\end{align}
		\end{subequations}
		for any $(v_{+,h},v_{-,h},\chi_{+,h},\chi_{-,h}) \in \mathbf{V}_{h}\times \mathbf{V}_{h}\times V_{h}\times V_{h}$.
		
		Algorithm \ref{alg:CNFEM} works correctly only after removing line 4. Furthermore, the results in \Cref{lem:ex}, \eqref{lemma: Des Mass and Energy}, and \Cref{TH error estimate} remain valid if the approximation of $q$ is omitted.
	\end{remark}

	\section{Numerical experiments}\label{secNmecical}
	In this section, we present numerical experiments to validate the theoretical results of the proposed method. The effectiveness of the method will be evaluated through its optimal convergence rates and its ability to preserve mass and energy. Moreover, the well-known instability phenomenon will also be observed under certain parameter settings.
	
	\begin{exmp}
		(A one-dimensional density wave) We consider the one-dimensional GPP equation on $\Omega = [0,L]$ with parameters $q=1$, $G=2$, $g=1$, and 
		$$l_0 = \frac{\sqrt{2\pi (G-g)}}{q},\quad L=8l_0.$$
		The problem is subject to the initial conditions
		\[
		\psi_{+}(x,0) = U_0 \cos\!\left(\frac{2\pi x}{l_0}\right),
		\qquad
		\psi_{-}(x,0) = U_0 \sin\!\left(\frac{2\pi x}{l_0}\right),
		\]
		and periodic boundary conditions
		\[
		\psi_\pm(x,t) = \psi_\pm(x+L,t),
		\qquad t \in [0,T].
		\]
		The analytical solution is given by 
		\begin{align*}
			&\psi _ +=U_0\exp(-\mathbf{i}\mu t) \text{cos}(\frac{2 \pi x}{l_0}),\\
			&\psi _ -=U_0\exp(-\mathbf{i}\mu t) \text{sin}(\frac{2 \pi x}{l_0}),\\
			&\phi = \Phi_0 \text{cos}(\frac{4 \pi x}{l_0}),
		\end{align*}
		where $U_0 = 2\sqrt{5},\;\Phi_0 = \frac{(G-g)U_0^2}{2q},\;\mu = \frac{2\pi^2}{l_0^2}+\frac{1}{2}(G+g)U_0^2$.

		The spatial discretization errors based on the $P^k$ ($k=1,2$) elements are reported in \Cref{1DP1 space} and \Cref{1DP2 space}, respectively. To eliminate the influence of temporal discretization errors, we set $\tau = 1 \times 10^{-4}$ and $T = 1 \times 10^{-2}$. The results show that the spatial errors exhibit $(k+1)$-th order convergence, in good agreement with the theoretical predictions.
		The temporal discretization errors based on $P^2$ element are presented in \Cref{1DP2 time}, where $T = 1 \times 10^{-1}$. To suppress the influence of spatial discretization errors, we take $h = L/8000$. The results demonstrate that the temporal errors also achieve second-order convergence, again consistent with the theoretical analysis.
		The conservation of masses and energy is examined in Figure~\ref{conserve1D}, with $\tau = 1 \times 10^{-3}$, $h = L/1000$, and $T = 5$, using $P^2$ finite elements. As illustrated in the figure, the changes in masses and energy remain at the level of round-off accuracy, confirming the conservation properties of the proposed method. 
		
		The instability of the one-dimensional density wave has been reported in \cite{GPandGPPantoine_computational_2013} and is an intrinsic feature of the system. Similar behavior is observed in our simulations. Figure~\ref{ex1D_psi+ 1} shows snapshots of $\psi_+$ at $t = 0$, $0.8$, and $2$ with $\tau = 1 \times 10^{-3}$ and $h = L/1000$. The discrepancy between the numerical and exact solutions becomes increasingly pronounced after $t = 2.5$, eventually leading to an unstable state, as depicted in Figure~\ref{ex1D_psi+ 2}. A similar phenomenon is observed for $\psi_-$ and $\phi$.
		
		\begin{table}[H]
			\centering
			\begin{tabular}{ccccccc}
				\hline
				& $e_{\psi,+}$ & rate &
				$e_{\psi,-}$ & rate &
				$e_{\phi}$ & rate  \\
				\hline    
				$h = L/100$ & 3.95E-01 & --
				& 3.95E-01
				& -- & 5.00E-00 & --
				\\
				$h = L/200$ & 1.00E-01 & 1.98
				& 1.00E-01
				& 1.98 & 1.33E-00 & 1.91
				\\
				$h = L/400$ & 2.52E-02 & 1.99
				& 2.52E-02
				& 1.99 &3.37E-01 & 1.98 \\
				$h = L/800$ & 6.30E-03 & 2.00
				& 6.30E-03
				& 2.00 & 8.48E-02 & 1.99\\
				\hline    
			\end{tabular}
			\caption{Spatial discretization errors of $P^1$ element.}
			\label{1DP1 space}
		\end{table}
		
		\begin{table}[H]
			\centering
			\begin{tabular}{ccccccc}
				\hline
				& $e_{\psi,+}$ & rate &
				$e_{\psi,-}$ & rate &
				$e_{\phi}$ & rate  \\
				\hline    
				$h = L/100$ & 1.04E-02 & --
				& 1.04E-02
				& -- & 1.93E-01 & --
				\\
				$h = L/200$ & 1.29E-03 & 3.01
				& 1.29E-03
				& 3.01 & 2.34E-02 & 3.04
				\\
				$h = L/400$ & 1.62E-04 & 3.00
				& 1.62E-04
				& 3.00 & 2.90E-03 & 3.01 \\
				$h = L/800$ & 2.07E-05 & 2.97
				& 2.07E-05
				& 2.97 & 3.62E-04 & 3.00\\
				\hline    
			\end{tabular}
			\caption{Spatial discretization errors of $P^2$ element.}
			\label{1DP2 space}
		\end{table}
		
		
		\begin{table}[H]
			\centering
			\begin{tabular}{lcccccc}
				\hline
				& $e_{\psi,+}$ & rate &
				$e_{\psi,-}$ & rate \\
				\hline    
				$\tau = 5 \times 10^{-3}$ & 1.07E-01 & --
				& 1.07E-01
				& -- 
				\\
				$\tau = 2.5 \times 10^{-3}$ & 2.68E-02 & 2.00
				& 2.68E-02
				& 2.00 
				\\
				$\tau = 1.25 \times 10^{-3}$ & 6.71E-03 & 2.00
				& 6.71E-03
				& 2.00 
				\\
				$\tau = 6.25 \times 10^{-4}$ & 1.68E-03 & 2.00
				& 1.68E-03
				& 2.00 \\
				\hline    
			\end{tabular}
			\caption{Temporal discretization errors of $P^2$ element.}
			\label{1DP2 time}
		\end{table}
		
		
		\begin{figure}[!htbp]
			\centering
			$\begin{array}{c}
				\includegraphics[width=5.5cm,height=4.5cm]{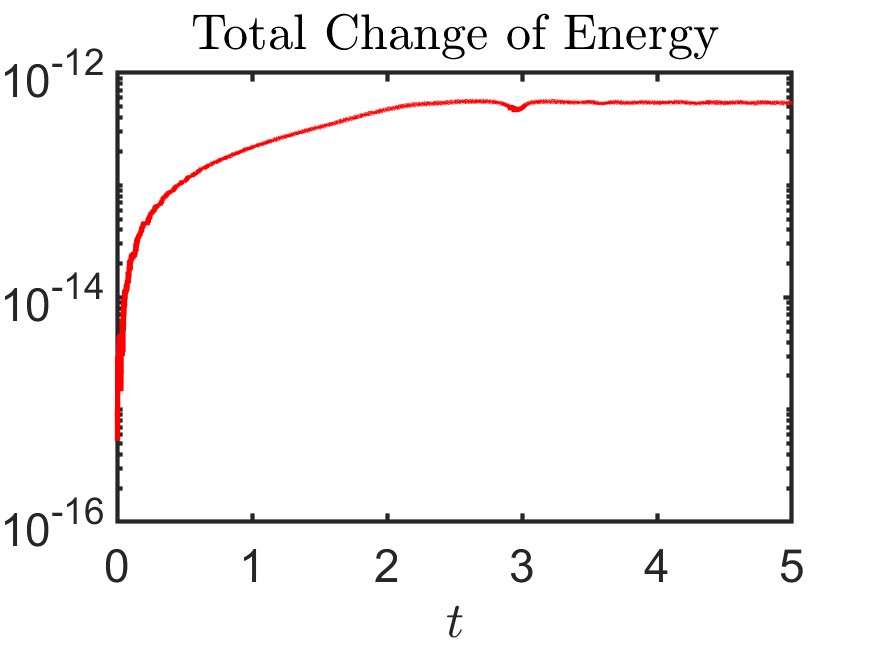} \; 
				\includegraphics[width=5.5cm,height=4.5cm]{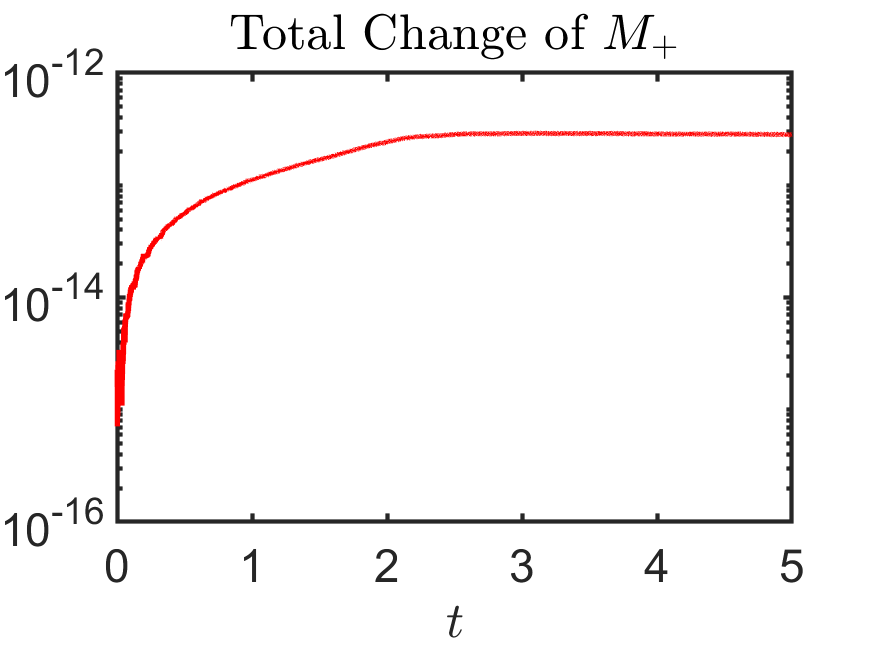} \; \includegraphics[width=5.5cm,height=4.5cm]{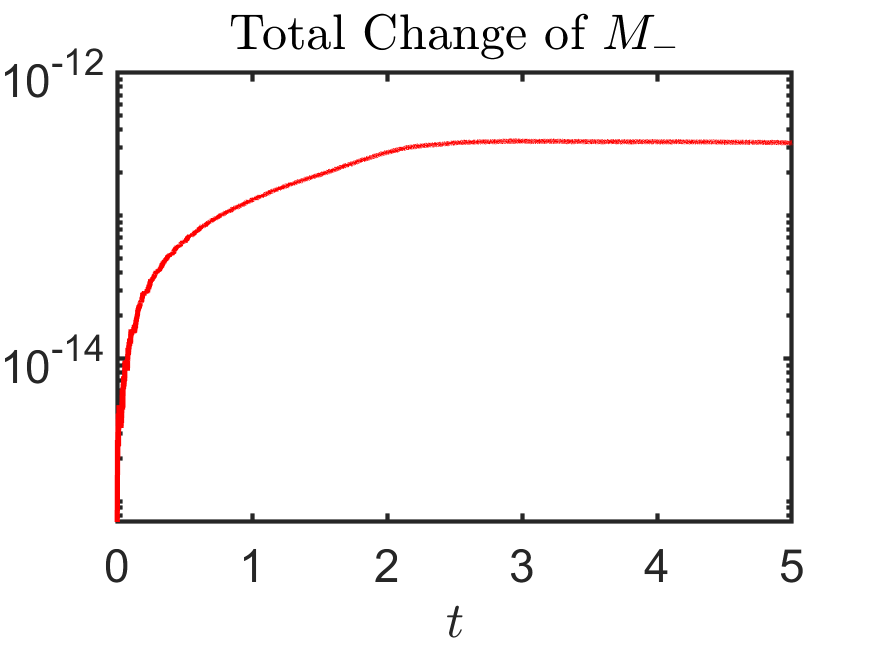}
			\end{array}$\vspace{-0.2cm}
			\caption{Total changes of the energy and masses.} \label{conserve1D}
		\end{figure}
		
		\begin{figure}[!htbp]
			$\begin{array}{c}
				\includegraphics[width=5.5cm,height=4.5cm]{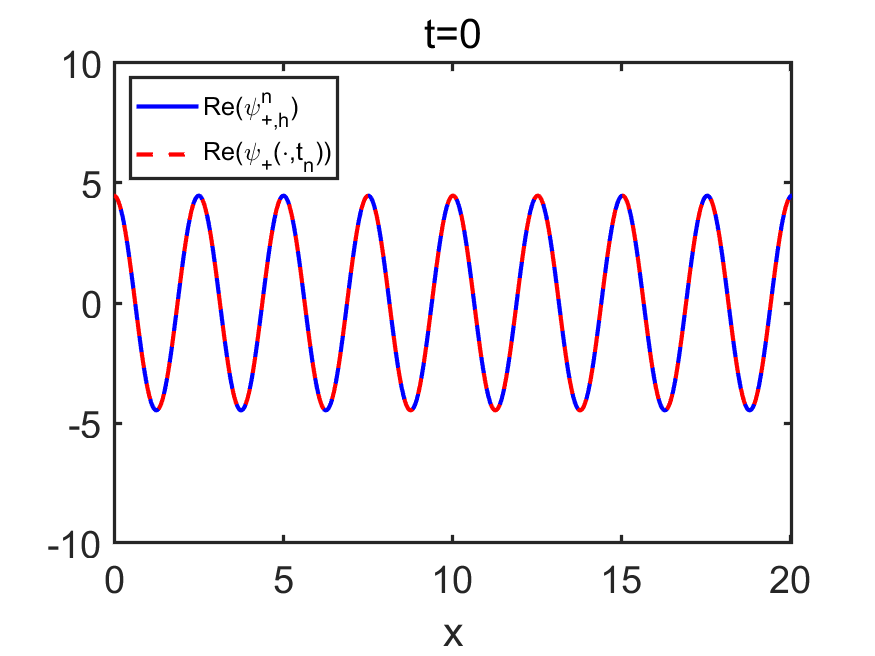} \;
				\includegraphics[width=5.5cm,height=4.5cm]{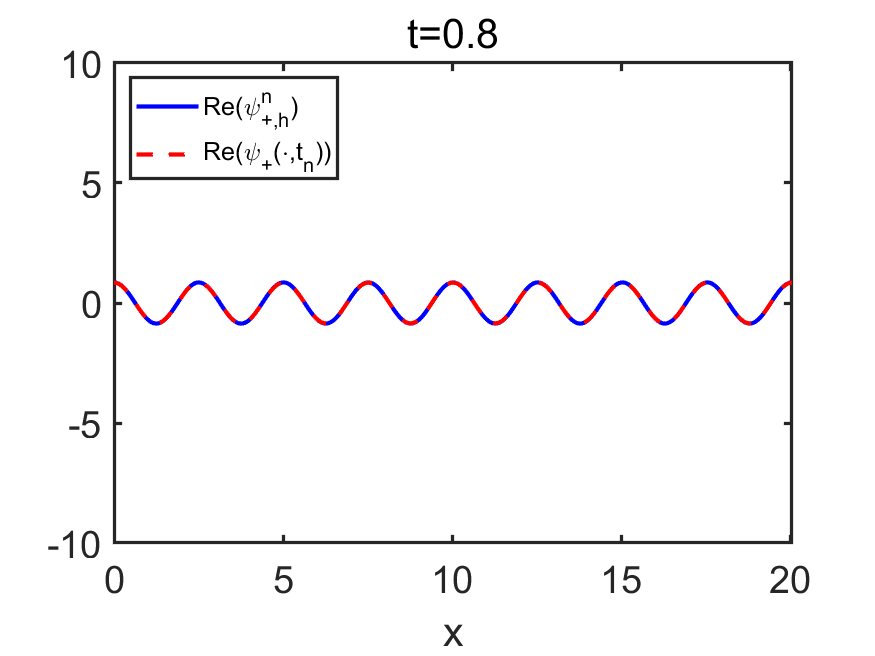} \;
				\includegraphics[width=5.5cm,height=4.5cm]{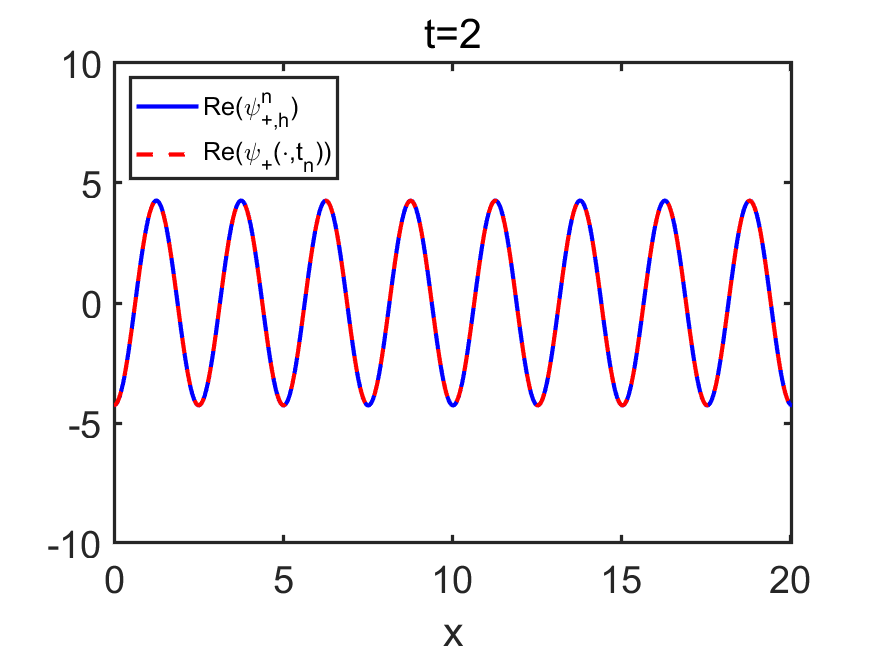} 
			\end{array}$\vspace{-0.2cm}
			\caption{Patterns before $t \le 2$ of Re$(\psi_{+,h})$.} 
			\label{ex1D_psi+ 1}
		\end{figure}
		
		\begin{figure}[!htbp]
			$\begin{array}{c}
				\includegraphics[width=5.5cm,height=4.5cm]{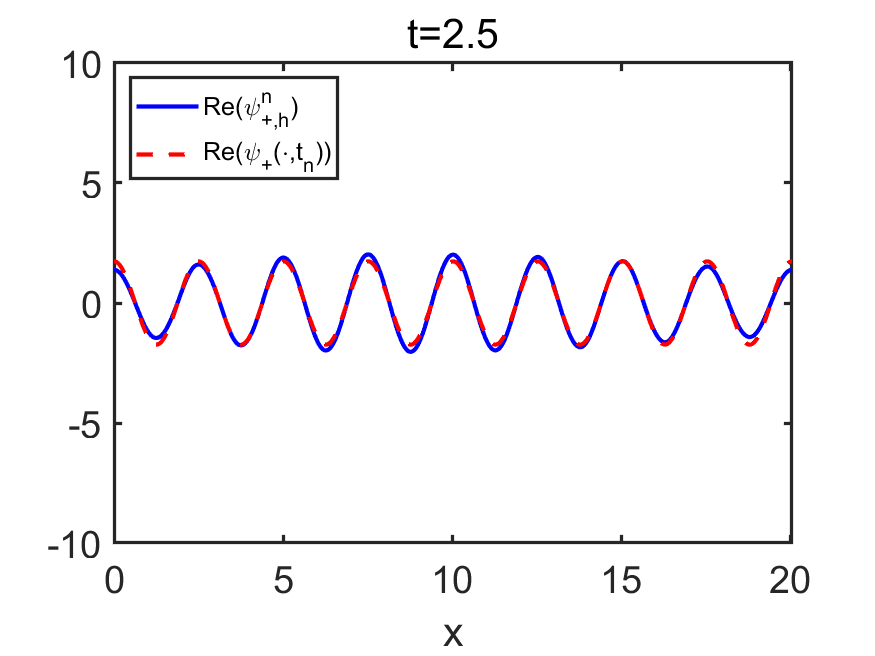}\; 
				\includegraphics[width=5.5cm,height=4.5cm]{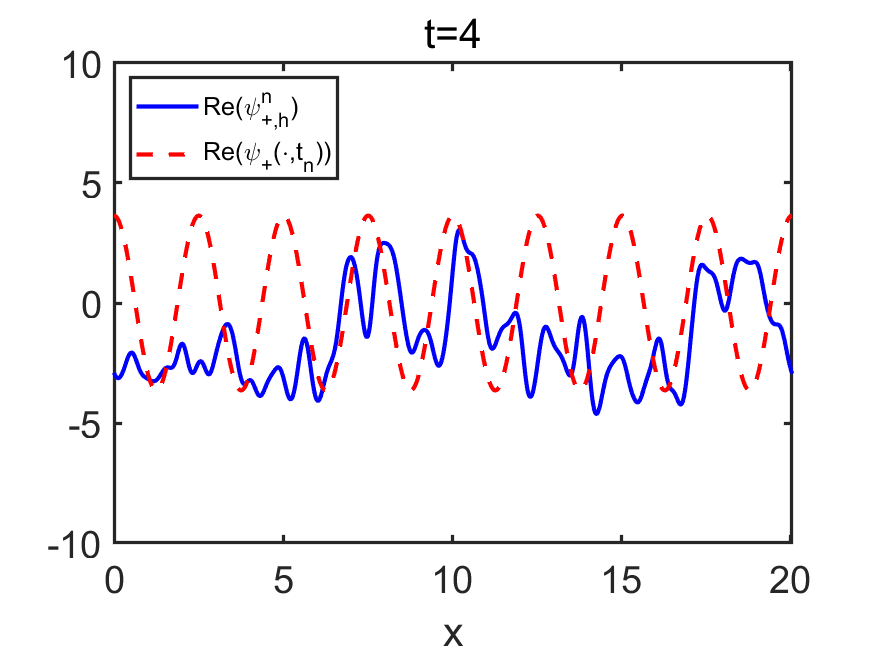}\;
				\includegraphics[width=5.5cm,height=4.5cm]{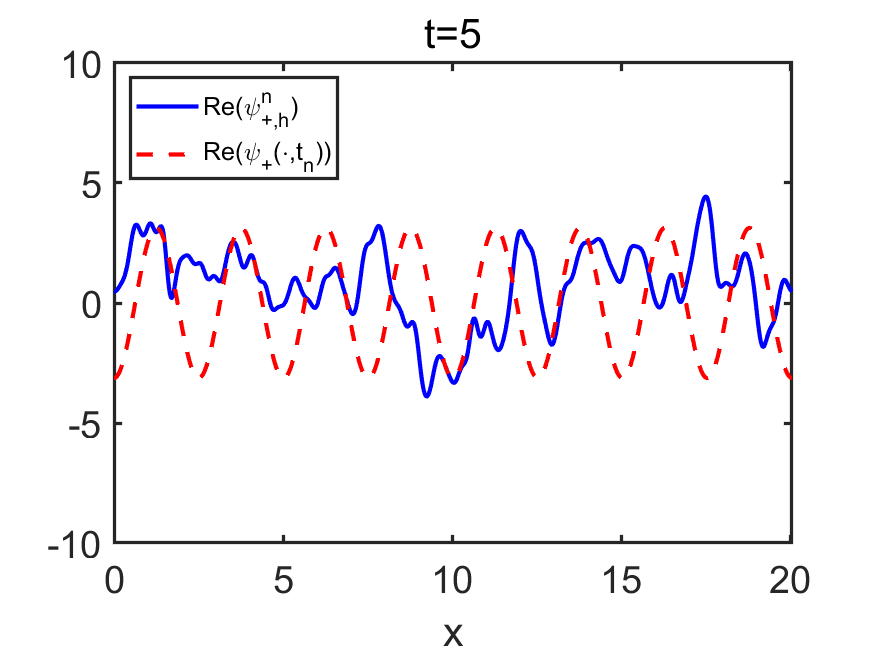}
			\end{array}$\vspace{-0.2cm}
			\caption{The instability of $\psi_{+,h}$.} 
			\label{ex1D_psi+ 2}
		\end{figure}
	\end{exmp}
	
	\begin{exmp}
		(A two-dimensional continuous wave of Gross-Pitaevskii equations) \cite{Handbook}
		Consider the Gross-Pitaevskii (GP) equations, corresponding to the case $q = 0$ in \eqref{TargetEq}. In this setting, the system \eqref{TargetEq} reduces to
		\begin{subequations}\label{TargetEqGP}
			\begin{align}
				&\mathbf{i}\,\partial_t \psi_+ = \left[ -\frac{1}{2} \nabla^2 + \left( g \, |\psi_+|^2 + G \, |\psi_-|^2 \right) \right] \psi_+, 
				& \text{in } \Omega \times (0,T], \label{TargetEq1GP} \\
				&\mathbf{i}\,\partial_t \psi_- = \left[ -\frac{1}{2} \nabla^2 + \left( g \, |\psi_-|^2 + G \, |\psi_+|^2 \right) \right] \psi_-, 
				& \text{in } \Omega \times (0,T], \label{TargetEq2GP} \\
				&\psi_\pm(x,y,0) = \psi_{\pm,0}(x,y), & \text{in } \Omega,
			\end{align}
		\end{subequations} 
		where $\Omega = [0,L]^2$ with $L=5$.
		
		Given the initial conditions
		\[
		\psi_{+,0} = A_0 \exp\!\big(\mathbf{i}( A_2 (x + y) + X_1 )\big), 
		\quad 
		\psi_{-,0} = B_0 \exp\!\big(\mathbf{i}( B_2 (x + y) + X_2 )\big),
		\]
		together with periodic boundary conditions for $\psi_{\pm}$, the exact solution takes the form
		\[
		\psi_{+}(x,y,t) = A_0 \exp\!\big(\mathbf{i}( A_1 t + A_2 (x + y) + X_1 )\big), 
		\quad 
		\psi_{-}(x,y,t) = B_0 \exp\!\big(\mathbf{i}( B_1 t + B_2 (x + y) + X_2 )\big),
		\]
		which preserves constant amplitudes.
		
		The parameters are given by
		\[
		A_0 = B_0 = 4.5, 
		\quad 
		A_2 = B_2 = \frac{4\pi}{5},
		\]
		\[
		A_1 = -\big(A_2^2 + g A_0^2 + G B_0^2\big), 
		\quad 
		B_1 = -\big(B_2^2 + G A_0^2 + g B_0^2\big),
		\]
		\[
		X_1 = 0, 
		\quad 
		X_2 = \frac{\pi}{5},
		\]
		with the model parameters
		\(G = 2, \ g = 1.\)
		The spatial discretization errors based on the $P^k$ ($k=1,2$) elements are reported in \Cref{2DP1 space GP} and \Cref{2DP2 space GP}, respectively. To eliminate the influence of temporal discretization errors, we set $\tau = 1 \times 10^{-5}$ and $T = 1 \times 10^{-3}$. The results show that the spatial errors exhibit $(k+1)$-th order convergence, in good agreement with the theoretical predictions.
		The temporal discretization errors are presented in Table~\ref{2DP2 time GP}, with $T = 4 \times 10^{-2}$. To suppress the influence of spatial discretization errors, we take $h = L/200$. The results in Table~\ref{2DP2 time GP} demonstrate that the temporal errors achieve second-order convergence, also consistent with theoretical expectations.  
		The conservation of masses and energy is examined in Figure~\ref{ex2DGP total change}, with $\tau = 1 \times 10^{-3}$, $h = L/160$, and $T = 1$. From the result, it is evident that the changes in masses and energy remain at the level of machine precision, confirming the conservation properties of the proposed scheme.  
		
		In this example, because $A_1 = B_1$, the approximations $\psi_{+,h}$ and $\psi_{-,h}$ share a time period of $2\pi |A_1^{-1}| \approx 0.0937$. Several patterns within one time period are shown in Figures~\ref{ex2DGP psip} and \ref{ex2DGP psin}. The snapshots of $\psi_{+,h}$ and $\psi_{-,h}$ exhibit a phase difference of $X_2 - X_1$, and their patterns propagate in the northeast direction over time at speeds $A_2$ and $B_2$, respectively.  
		The computed convergence orders and the energy-mass transformation plots demonstrate that the proposed numerical scheme remains accurate and robust for the couple GPE ($q = 0$ in \eqref{TargetEq}).
		
		\begin{table}[H]
			\centering
			\begin{tabular}{ccccc}
				\hline
				& $e_{\psi,+}$ & rate & $e_{\psi,-}$ & rate \\
				\hline    
				$h = L/20$  & 2.36E+00 & --   & 2.36E+00 & --   \\
				$h = L/40$  & 6.06E-01 & 1.96 & 6.06E-01 & 1.96 \\
				$h = L/80$  & 1.52E-01 & 1.99 & 1.52E-01 & 1.99 \\
				$h = L/160$ & 3.82E-02 & 2.00 & 3.82E-02 & 2.00 \\
				\hline    
			\end{tabular}
			\caption{Spatial discretization errors of $P^1$ element.}
			\label{2DP1 space GP}
		\end{table}
		
		\begin{table}[H]
			\centering
			\begin{tabular}{cccccc}
				\hline
				& $e_{\psi,+}$ & rate & 
				$e_{\psi,-}$ & rate \\
				\hline    
				$h = L/20$ & 1.37E-01 & --
				& 1.37E-01
				& --
				\\
				$h = L/40$ & 1.71E-02 & 3.00
				& 1.71E-02
				& 3.00
				\\
				$h = L/80$ & 2.18E-03 & 2.97
				& 2.18E-03
				& 2.97\\
				$h = L/160$ & 2.73E-04 & 2.99
				& 2.73E-04
				& 2.99 \\
				\hline    
			\end{tabular}
			\caption{Spatial discretization errors of $P^2$ element.}
			\label{2DP2 space GP}
		\end{table}
		
		\begin{table}[H]
			\centering
			\begin{tabular}{cccccc}
				\hline
				& $e_{\psi,+}$ & rate & 
				$e_{\psi,-}$ & rate \\
				\hline    
				$\tau = 4 \times 10 ^{-3}$ & 3.58E-01 & --
				& 3.58E-01
				& --
				\\
				$\tau = 2 \times 10 ^{-3}$ & 9.03E-02 & 1.99
				& 9.03E-02
				& 1.99
				\\
				$\tau = 1 \times 10 ^{-3}$ & 2.26E-02 & 2.00
				& 2.26E-02
				& 2.00\\
				$\tau = 5 \times 10 ^{-4}$ & 5.67E-03 & 2.00
				& 5.67E-03
				& 2.00\\
				\hline    
			\end{tabular}
			\caption{Temporal discretization error of $P^2$ element.}
			\label{2DP2 time GP}
		\end{table}
		\begin{figure}[!htbp]
			\centering
			$\begin{array}{c}
				\includegraphics[width=5.5cm,height=4.5cm]{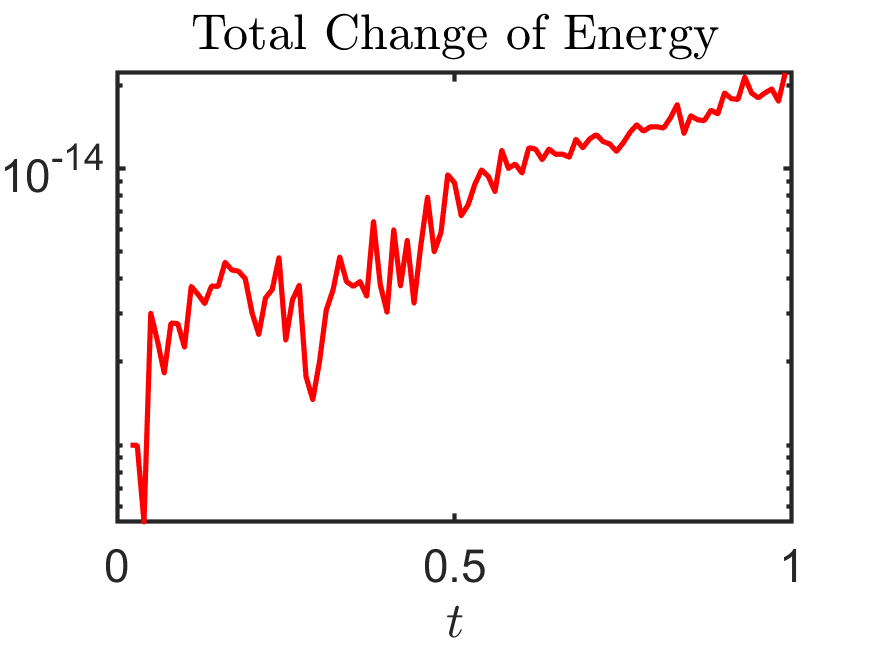} \;
				\includegraphics[width=5.5cm,height=4.5cm]{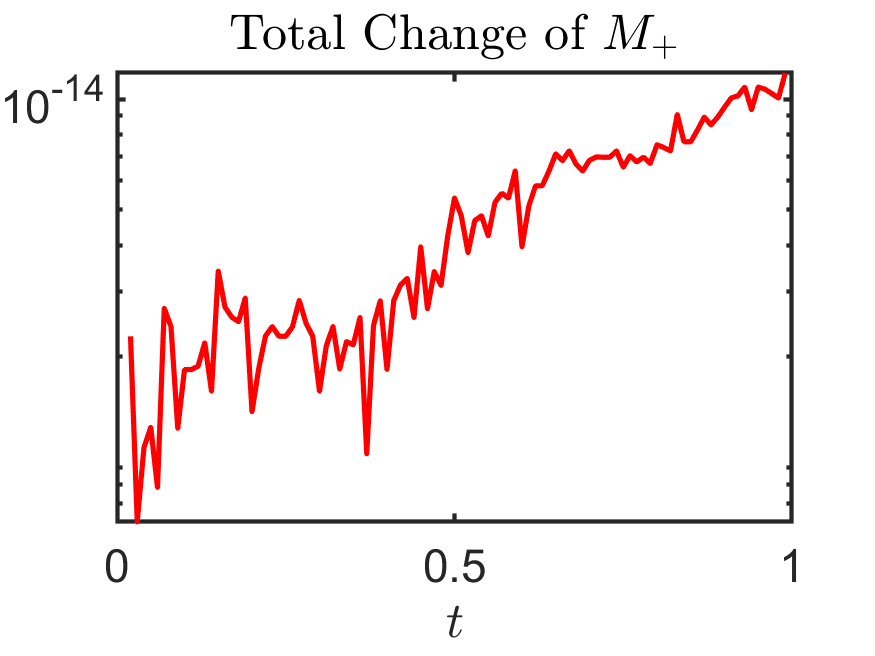} \; \includegraphics[width=5.5cm,height=4.5cm]{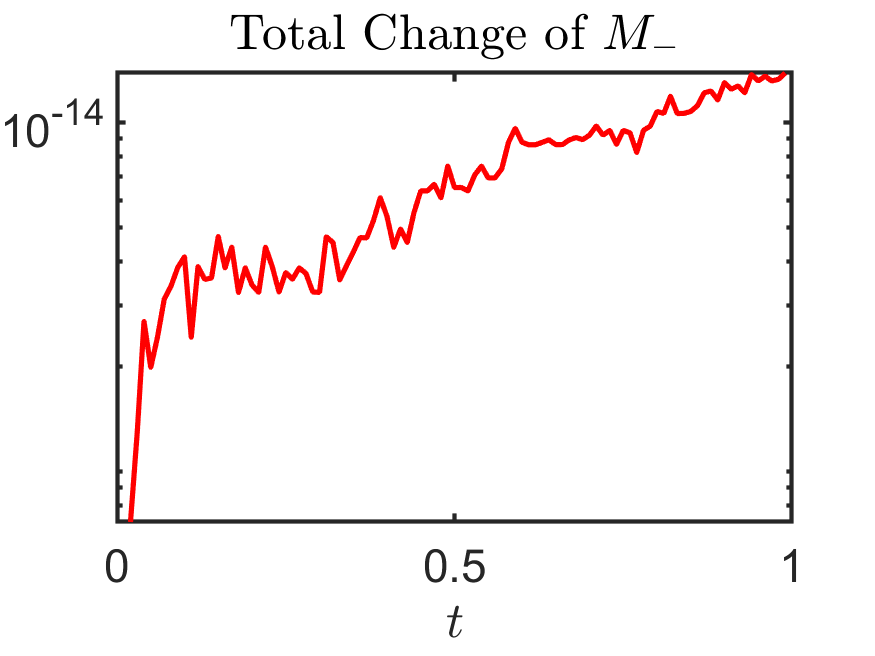}
			\end{array}$\vspace{-0.2cm}
			\caption{Total changes of the energy and masses.} \label{ex2DGP total change}
		\end{figure}
		
		\begin{figure}[!htbp]
			\centering
			$\begin{array}{c}
				\includegraphics[width=5.5cm,height=4.5cm]{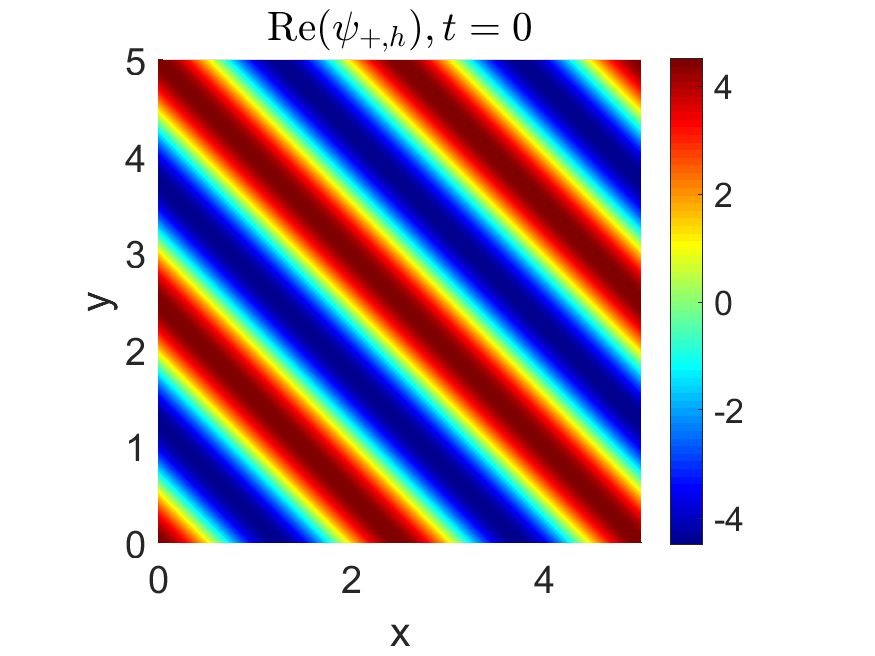} \;
				\includegraphics[width=5.5cm,height=4.5cm]{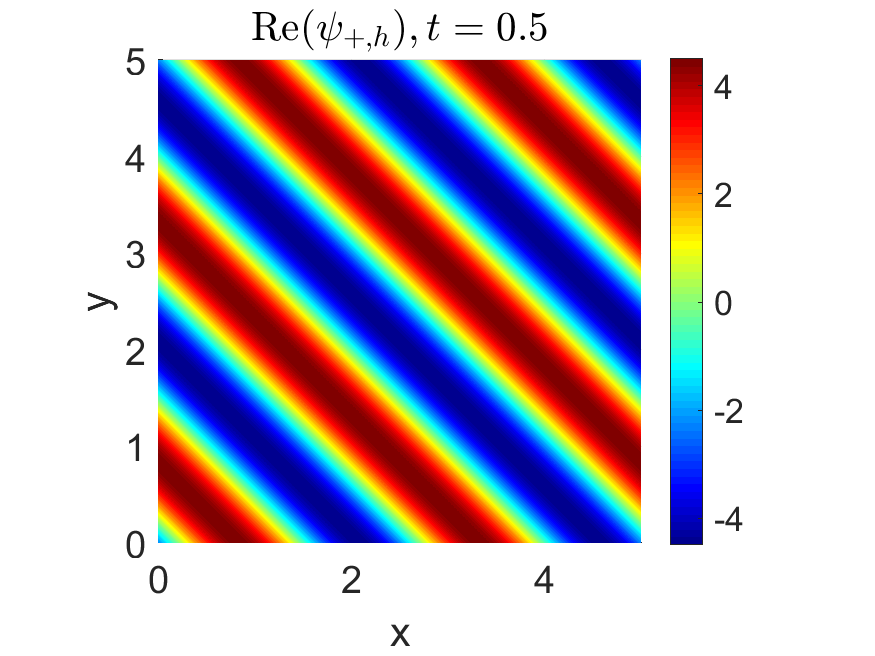} \;
				\includegraphics[width=5.5cm,height=4.5cm]{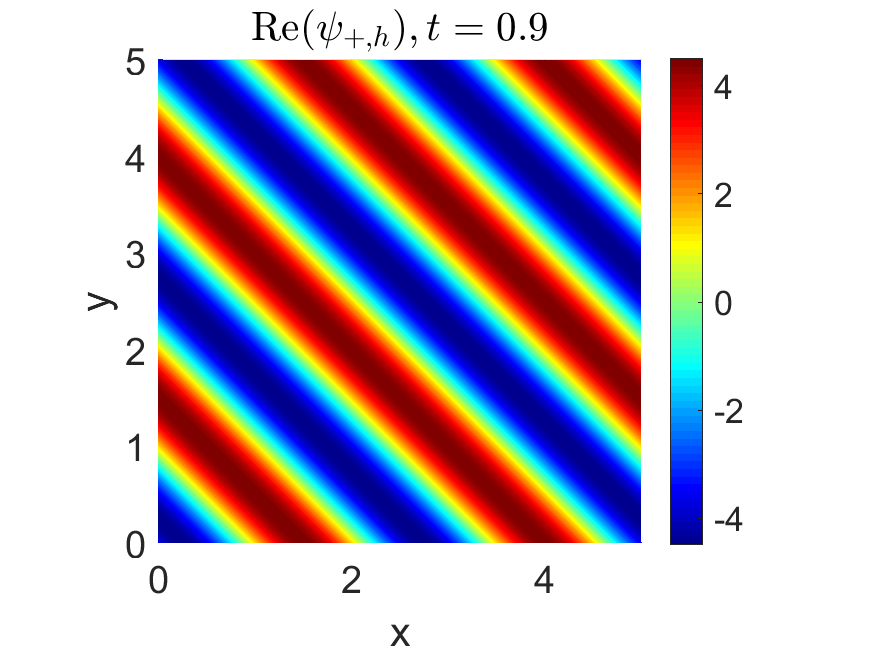} \;
			\end{array}$\vspace{-0.2cm}
			\caption{Snapshots of $\text{Re}(\psi_{+,h})$} \label{ex2DGP psip}
		\end{figure}
		
		\begin{figure}[!htbp]
			\centering
			$\begin{array}{c}
				\includegraphics[width=5.5cm,height=4.5cm]{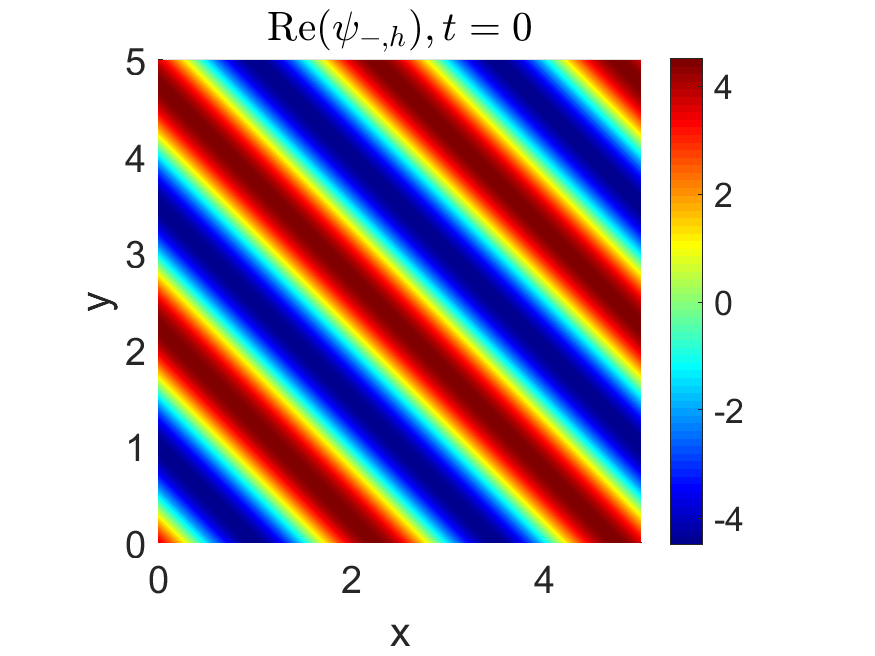} \;
				\includegraphics[width=5.5cm,height=4.5cm]{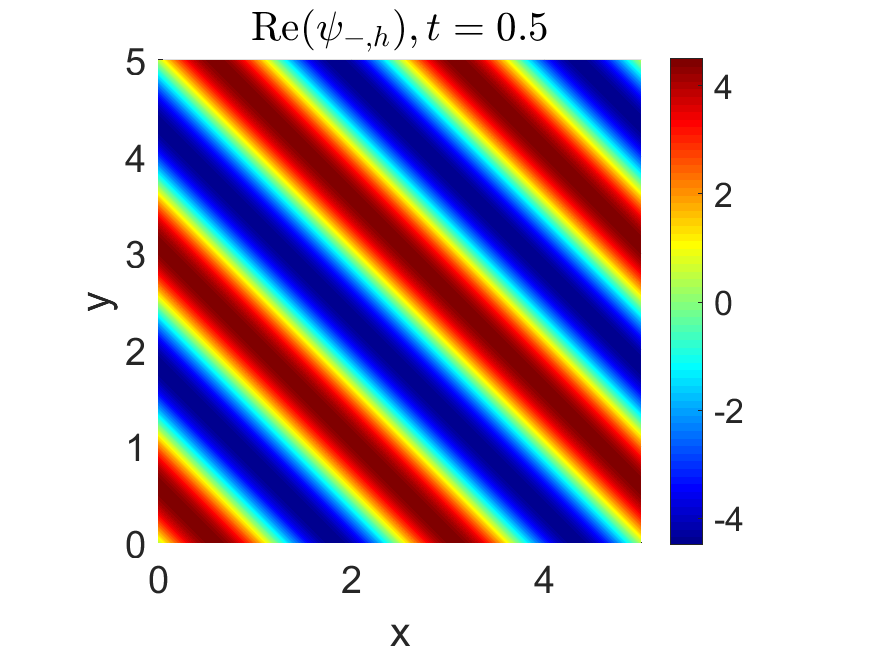} \;
				\includegraphics[width=5.5cm,height=4.5cm]{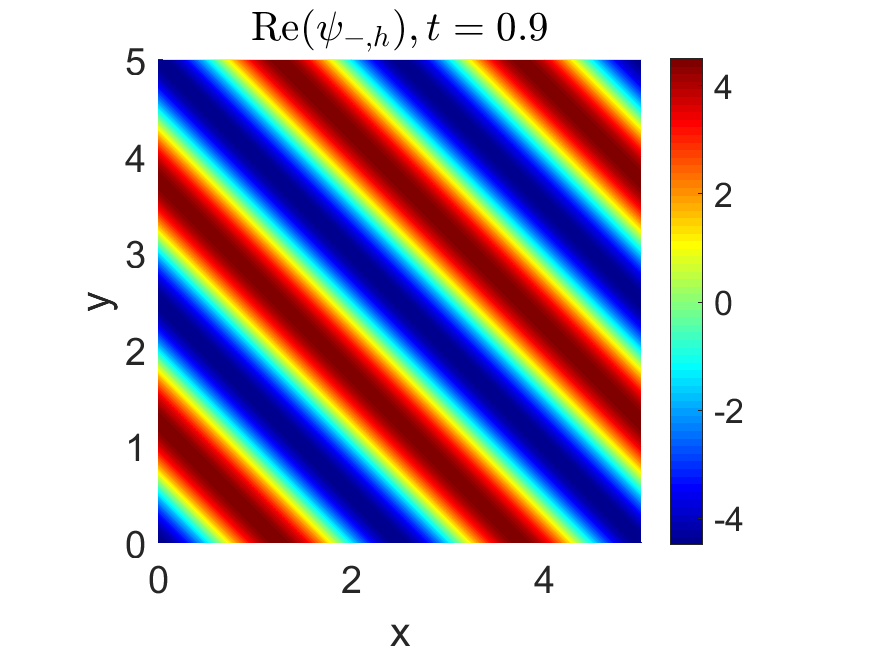} \;
			\end{array}$\vspace{-0.2cm}
			\caption{Snapshots of $\text{Re}(\psi_{-,h})$} \label{ex2DGP psin}
		\end{figure}
	\end{exmp}
	
	\begin{exmp} 
		(A two-dimensional density wave) 
		We consider the parameters 
		\[
		q = 1, \quad G = 2, \quad g = 1,
		\] 
		with initial conditions  
		\[
		\psi_+ = U_{0x} \cos\Big(\frac{2 \pi x}{l_0}\Big) + \mathbf{i} U_{0y} \cos\Big(\frac{2 \pi y}{l_0}\Big), 
		\quad 
		\psi_- = U_{0x} \sin\Big(\frac{2 \pi x}{l_0}\Big) + \mathbf{i} U_{0y} \sin\Big(\frac{2 \pi y}{l_0}\Big),
		\] 
		and periodic boundary conditions for $\psi_{\pm}$ and $\phi$.  
		
		For equation \eqref{TargetEq}, the exact solution is given by  
		\[
		\psi_+ = U_{0x} \, e^{-\mathbf{i} \mu t} \cos\Big(\frac{2 \pi x}{l_0}\Big) + \mathbf{i} U_{0y} \, e^{-\mathbf{i} \mu t} \cos\Big(\frac{2 \pi y}{l_0}\Big),
		\] 
		\[
		\psi_- = U_{0x} \, e^{-\mathbf{i} \mu t} \sin\Big(\frac{2 \pi x}{l_0}\Big) + \mathbf{i} U_{0y} \, e^{-\mathbf{i} \mu t} \sin\Big(\frac{2 \pi y}{l_0}\Big),
		\] 
		\[
		\phi = \Phi_{0x} \cos\Big(\frac{4 \pi x}{l_0}\Big) + \Phi_{0y} \cos\Big(\frac{4 \pi y}{l_0}\Big),
		\] 
		where 
		\[
		U_{0x} = U_{0y} = 2 \sqrt{5}, \quad 
		\Phi_{0x} = \Phi_{0y} = \frac{(G-g) U_{0x}^2}{2q}, \quad
		l_0 = \frac{\sqrt{2\pi (G-g)}}{q}, \quad
		\mu = \frac{2\pi^2}{l_0^2} + \frac{1}{2} (G+g)(U_{0x}^2 + U_{0y}^2),
		\] 
		and the computational domain \(\Omega = [0,L]^2\) with $L=l_0$. 
		
		The spatial discretization errors based on the $P^k$ ($k=1,2$) elements are reported in \Cref{2DP1 space with phi} and \Cref{2DP2 space}, respectively. To eliminate the influence of temporal discretization errors, we set $\tau = 1 \times 10^{-5}$ and $T = 1 \times 10^{-3}$. The results show that the spatial errors exhibit $(k+1)$-th order convergence, in good agreement with the theoretical predictions. 
		The temporal discretization errors are reported in Table~\ref{2DP2 time}, with $T = 4 \times 10^{-2}$. To suppress the influence of spatial discretization errors, we take $h = L/200$. The results in Table~\ref{2DP2 time} demonstrate that the temporal errors exhibit second-order convergence, again consistent with the theoretical analysis.  
		The conservation of masses and energy is examined in Figure~\ref{ex2D total change}, with $\tau = 1 \times 10^{-3}$, $T = 1$, and $h = L/160$. From the result, the changes in masses and energy remain at the level of machine precision, confirming the conservation properties of the proposed scheme.  
		
		Figure~\ref{ex2D psip} presents snapshots of the numerical solution $\mathrm{Re}(\psi_+)$ at $t = 0, 0.01, 0.02, 0.03, 0.04,$ and $0.05$. As indicated by the error results in Tables~\ref{2DP2 space} and \ref{2DP2 time}, the numerical solution converges to the exact solution, which is periodic in time with period $2\pi \mu^{-1} \approx 0.0995$.
		
		\begin{table}[H]
			\centering
			\begin{tabular}{ccccccc}
				\hline
				& $e_{\psi,+}$ & rate & $e_{\psi,-}$ & rate & $e_{\phi}$ & rate \\
				\hline    
				$h = L/20$  & 1.02E-01 & --   & 1.02E-01 & --   & 1.64E+00 & --   \\
				$h = L/40$  & 2.54E-02 & 2.00 & 2.54E-02 & 2.00 & 4.19E-01 & 1.97 \\
				$h = L/80$  & 6.36E-03 & 2.00 & 6.36E-03 & 2.00 & 1.05E-01 & 1.99 \\
				$h = L/160$ & 1.59E-03 & 2.00 & 1.59E-03 & 2.00 & 2.64E-02 & 2.00 \\
				\hline    
			\end{tabular}
			\caption{Spatial discretization errors of the $P^1$ element.}
			\label{2DP1 space with phi}
		\end{table}
		
		\begin{table}[H]
			\centering
			\begin{tabular}{ccccccc}
				\hline
				& $e_{\psi,+}$ & rate & 
				$e_{\psi,-}$ & rate & 
				$e_{\phi}$ & rate \\
				\hline    
				$h = L/20$ & 1.99E-03 & --
				& 1.99E-03
				& -- & 3.63E-02
				& --
				\\
				$h = L/40$ & 2.50E-04 & 3.00
				& 2.50E-04
				& 3.00 & 4.49E-03
				& 3.02
				\\
				$h = L/80$ & 3.12E-05 & 3.00
				& 3.12E-05
				& 3.00 & 2.50E-04
				& 3.00\\
				$h = L/160$ & 3.90E-06 & 3.00
				& 3.90E-06
				& 3.00 & 6.99E-05
				& 3.00 \\
				\hline    
			\end{tabular}
			\caption{Spatial discretization errors of the $P^2$ element.}
			\label{2DP2 space}
		\end{table}
		
		
		\begin{table}[H]
			\centering
			\begin{tabular}{cccccc}
				\hline
				& $e_{\psi,+}$ & rate & 
				$e_{\psi,-}$ & rate \\
				\hline    
				$\tau = 4 \times 10 ^{-3}$ & 1.49E-01 & --
				& 1.49E-01
				& --
				\\
				$\tau = 2 \times 10 ^{-3}$ & 3.75E-02 & 1.99
				& 3.75E-02
				& 1.99
				\\
				$\tau = 1 \times 10 ^{-3}$ & 9.40E-03 & 2.00
				& 9.40E-03
				& 2.00\\
				$\tau = 5 \times 10 ^{-4}$ & 2.35E-03 & 2.00
				& 2.35E-03
				& 2.00\\
				\hline    
			\end{tabular}
			\caption{Temporal discretization error with $T = 4\times 10^{-2}$}
			\label{2DP2 time}
		\end{table}
		
		\begin{figure}[!htbp]
			\centering
			$\begin{array}{c}
				\includegraphics[width=5.5cm,height=4.5cm]{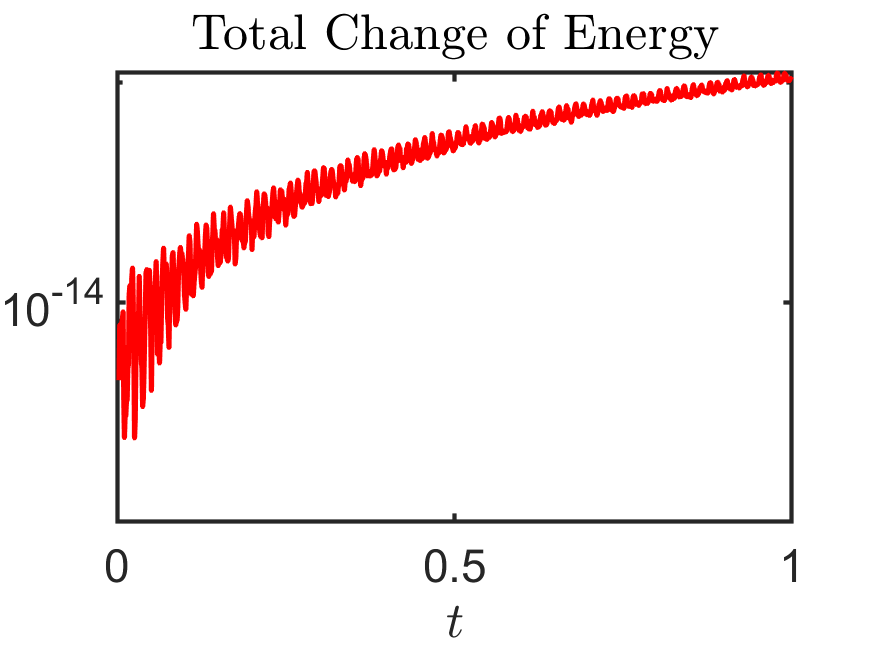} \;
				\includegraphics[width=5.5cm,height=4.5cm]{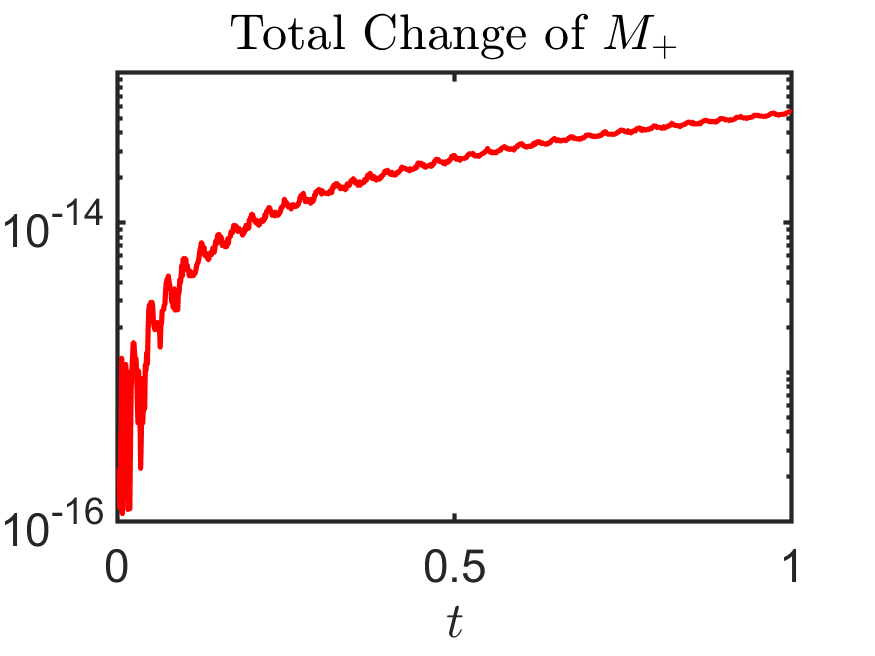} \;\includegraphics[width=5.5cm,height=4.5cm]{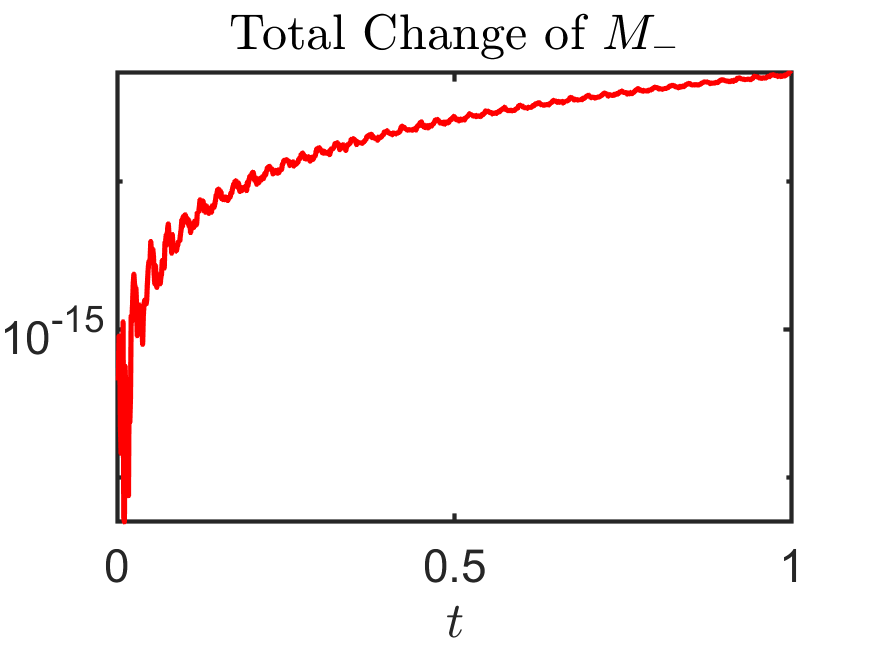} \;
			\end{array}$\vspace{-0.2cm}
			\caption{Total changes of the energy and masses.} \label{ex2D total change}
		\end{figure}
		
		\begin{figure}[!htbp]
			\centering
			$\begin{array}{c}
				\includegraphics[width=5.5cm,height=4.5cm]{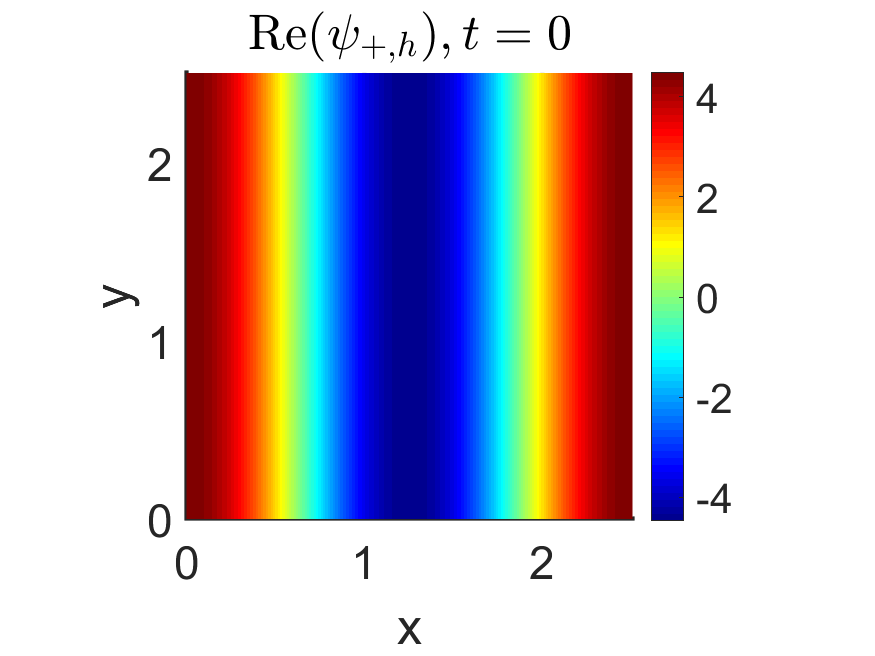} \;
				\includegraphics[width=5.5cm,height=4.5cm]{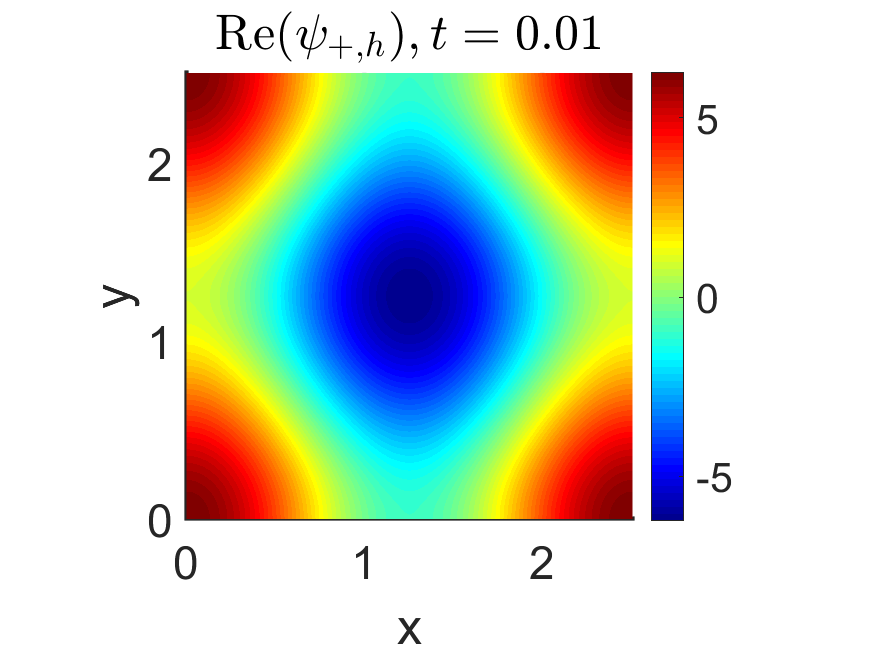} \;
				\includegraphics[width=5.5cm,height=4.5cm]{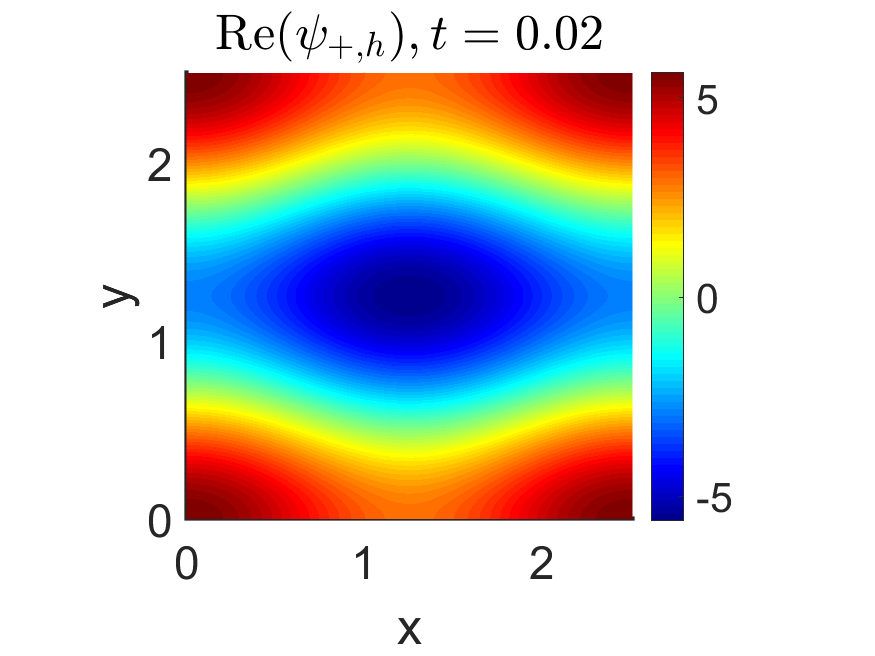} \; \\
				\includegraphics[width=5.5cm,height=4.5cm]{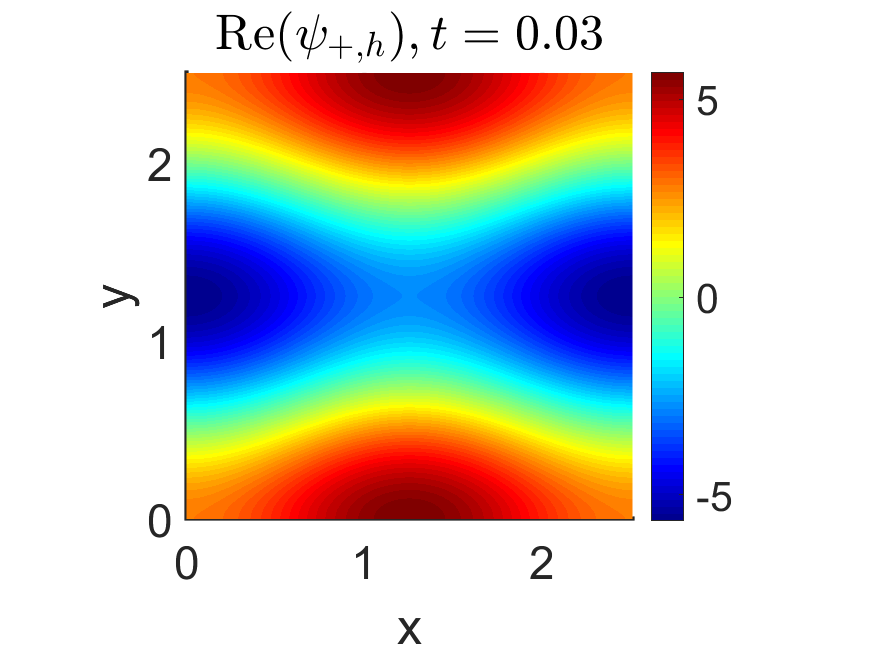} \;
				\includegraphics[width=5.5cm,height=4.5cm]{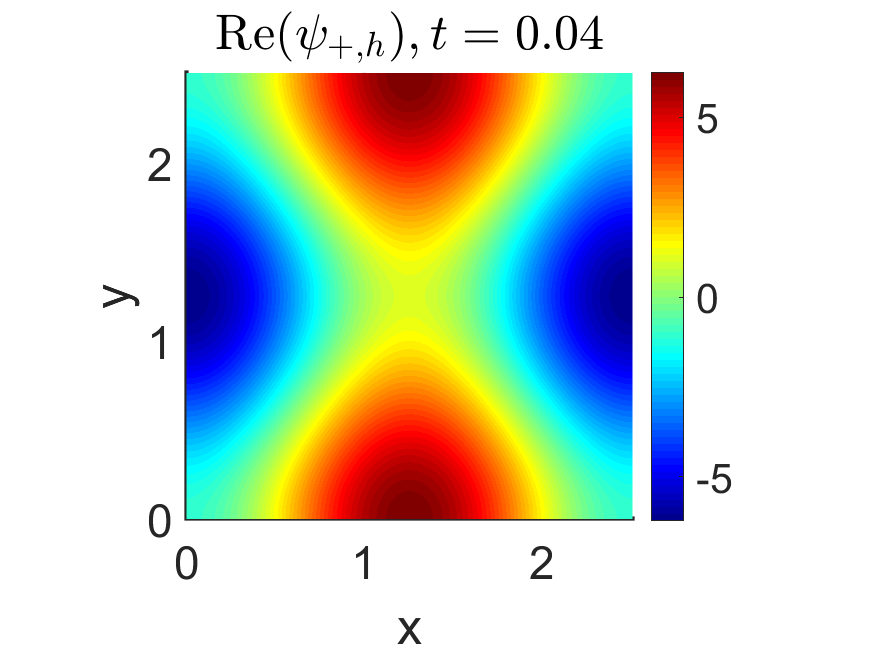} \;
				\includegraphics[width=5.5cm,height=4.5cm]{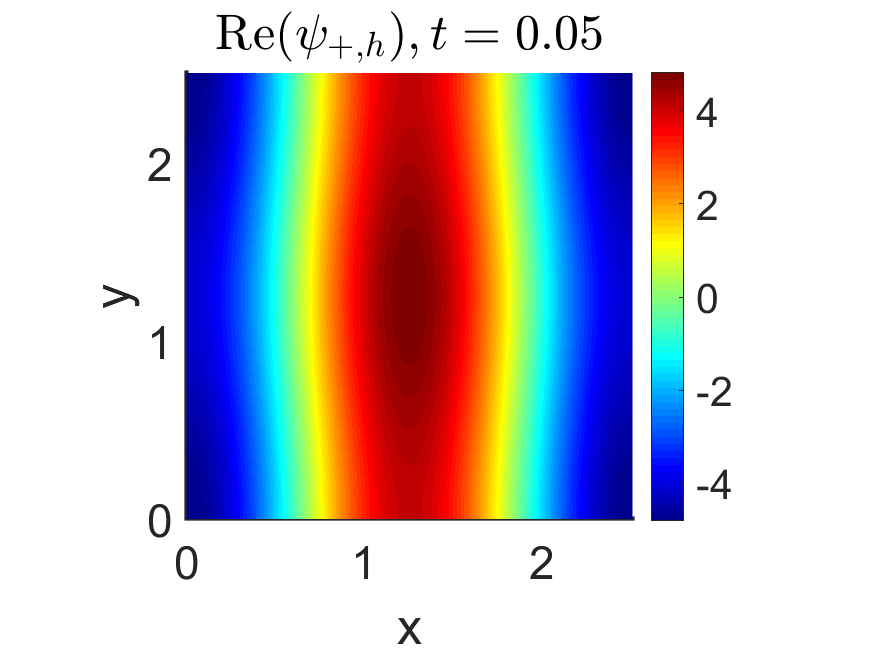} \; \\
			\end{array}$\vspace{-0.2cm}
			\caption{Snapshots of $\text{Re}(\psi_{+,h})$} \label{ex2D psip}
		\end{figure}
		
	\end{exmp}

	\appendix
	\section{Proof of \Cref{TH error estimate}}\label{Appendix}
	In this section, we present the proof of \Cref{TH error estimate} based on the method of induction.
	\begin{proof}[Proof of \Cref{TH error estimate}]
		\par
		\textbf{Step 1.} We begin by proving the following estimates
		\begin{align}
			&\left\| {e_{Z, \pm }^{\frac{1}{2}}} \right\| \le C\left( {{\tau ^2} + {h^{k + 1}}} \right),\\
			&\left\| {e_\phi ^{\frac{1}{2}}} \right\| \le C\left( {{\tau ^2} + {h^{k + 1}}} \right),\\
			&\left\| {e_{\psi , \pm }^1} \right\| \le C\left( {{\tau ^2} + {h^{k + 1}}} \right), \label{epsi1}\\
			&\left\| {{D_\tau }\eta _{\psi , \pm }^1} \right\| \le C\left( {{\tau ^2} + {h^{k + 1}}} \right).
		\end{align}
		We now take ${\chi _{+,h }} = \eta _{Z, + }^{\frac{1}{2}} - \eta _{Z, + }^{ - \frac{1}{2}}$
		in \eqref{deErrorEq4}, and from this, we obtain
		\begin{align}
			\begin{aligned}
				{\left\| {\eta _{Z, + }^{\frac{1}{2}}} \right\|^2} - {\left\| {\eta _{Z, + }^{ - \frac{1}{2}}} \right\|^2} &= \left( {S_{2, + }^0,\eta _{Z, + }^{\frac{1}{2}} - \eta _{Z, + }^{ - \frac{1}{2}}} \right) + \left( {T_{1, + }^0,\eta _{Z, + }^{\frac{1}{2}} - \eta _{Z, + }^{ - \frac{1}{2}}} \right)\\
				&\le 2{\left\| {S_{2, + }^0} \right\|^2} + 2{\left\| {T_{1, + }^0} \right\|^2} + \frac{1}{2}{\left\| {\eta _{Z, + }^{\frac{1}{2}}} \right\|^2} + \frac{1}{2}{\left\| {\eta _{Z, + }^{ - \frac{1}{2}}} \right\|^2} .   
			\end{aligned}
		\end{align}
		Because of \eqref{S2+} and the fact that $\left\|T_{1,+}^0 \right\|= 2\left\| \psi_{+}^0  -  \psi_{+,h}^0 \right\| \left\| \psi_{+}^0  +  \psi_{+,h}^0 \right\|_{\infty} \le Ch^{k+1}$, along with the inequality $ {\left\| {\eta _{Z, + }^{ - \frac{1}{2}}} \right\|^2} \le {\left\| {e _{Z, + }^{ - \frac{1}{2}}} \right\|^2} + {\left\| {\xi _{Z, + }^{ - \frac{1}{2}}} \right\|^2} \le Ch^{k+1}$, it follows that
		\begin{equation}
			{\left\| {\eta _{Z, + }^{\frac{1}{2}}} \right\|^2} \le 4{\left\| {S_{2, + }^0} \right\|^2} + 4{\left\| {T_{1, + }^0} \right\|^2} + 3{\left\| {\eta _{Z, + }^{ - \frac{1}{2}}} \right\|^2} \le C{\left( {{\tau ^2} + {h^{k + 1}}} \right)^2}. \label{etaZ0}    
		\end{equation}
		We then apply the projection error \eqref{Ritz projection error} to \eqref{etaZ0}, yielding
		\begin{equation}
			\left\| {e_{Z , + }^{\frac{1}{2}}} \right\| \le \left\| {\eta_{Z , + }^{\frac{1}{2}}} \right\| + \left\| {\xi_{Z , + }^{\frac{1}{2}}} \right\| \le C\left( {{\tau ^2} + {h^{k + 1}}} \right).     \label{eZp}
		\end{equation}
		Similarly, it holds
		\begin{align}
			&  {\left\| {\eta _{Z, - }^{\frac{1}{2}}} \right\|} \le  C{\left( {{\tau ^2} + {h^{k + 1}}} \right)}, \label{etaZ0+} \\
			&  \left\| {e_{Z , - }^{\frac{1}{2}}} \right\| \le \left\| {\eta_{Z , - }^{\frac{1}{2}}} \right\| + \left\| {\xi_{Z , - }^{\frac{1}{2}}} \right\| \le C\left( {{\tau ^2} + {h^{k + 1}}} \right).   \label{eZn}  
		\end{align}
		Next, by using Lemma \ref{propties of A1} in \eqref{ErrorEq3} at $n=0$, and incorporating \eqref{eZp} and \eqref{eZn}, 
		\begin{equation}
			\left\| {e_\phi ^{\frac{1}{2}}} \right\| \le C\left( {\left\| {e_{Z, + }^{\frac{1}{2}}} \right\| + \left\| {e_{Z, - }^{\frac{1}{2}}} \right\|} \right) + C{h^{k + 1}} \le C\left( {{\tau ^2} + {h^{k + 1}}} \right).   \label{ephi}
		\end{equation}
		By \eqref{etaZ0}, \eqref{etaZ0+}, and \eqref{ephi},  \Cref{lem:inftol2} and the assumption $\tau<Ch$ imply that there exists $h_1>0$ such that when $h<h_1$,
		\begin{align}
			&\|Z_{h, \pm }^{\frac{1}{2}}\| \leq \|R_hZ_{\pm }^{\frac{1}{2}}\|_{\infty} + \| \eta_{Z,\pm}^{\frac{1}{2}}\|_{\infty} \leq \|R_hZ_{\pm }^{\frac{1}{2}}\|_{\infty} + Ch^{-\frac{d}{2}}\| \eta_{Z,\pm}^{\frac{1}{2}}\| \leq D_Z+ Ch^{k+1-\frac{d}{2}} \leq D_Z+1, \\
			&\left\| {\phi_h^{\frac{1}{2}}} \right\|_\infty \leq \|R_h\phi^{\frac{1}{2}}\|_{\infty} + Ch^{-\frac{d}{2}}\| \eta_{\phi}^{\frac{1}{2}}\| \leq D_\phi+ Ch^{k+1-\frac{d}{2}} \leq D_\phi+1.
		\end{align}
		Then, by choosing $v_{+,h} = \bar \eta _{\psi , + }^{\frac{1}{2}}$ in \eqref{deErrorEq1} with $n=0$, we derive the following inequality
		\begin{align}
			\begin{aligned}
				\frac{1}{{2\tau }}\left( {{{\left\| {\eta _{\psi , + }^1} \right\|}^2} - {{\left\| {\eta _{\psi , + }^0} \right\|}^2}} \right) &= {\mathop{\rm Im}\nolimits} \left( {J_{1, + }^1,\bar \eta _{\psi , + }^{\frac{1}{2}}} \right) + {\mathop{\rm Im}\nolimits} \left( {R_{2, + }^1,\bar \eta _{\psi , + }^{\frac{1}{2}}} \right)\\
				&\le {\left\| {J_{1, + }^1} \right\|^2} + {\left\| {R_{2, + }^1} \right\|^2} + \frac{1}{4}\left( {{{\left\| {\eta _{\psi , + }^1} \right\|}^2} + {{\left\| {\eta _{\psi , + }^0} \right\|}^2}} \right).
			\end{aligned} \label{eta_psi+}
		\end{align} 
		In view of \eqref{eZp} and \eqref{ephi}, we have
		\begin{align}
			\begin{aligned}
				\left\| {J_{1, + }^1} \right\| &= \left\| {\left( {gZ_ + ^{\frac{1}{2}} + GZ_ - ^{\frac{1}{2}} + q\phi _{}^{\frac{1}{2}}} \right)\bar \psi _ + ^{\frac{1}{2}} - \left( {gZ_{+,h }^{\frac{1}{2}} + GZ_{-,h }^{\frac{1}{2}} + q\phi _h^{\frac{1}{2}}} \right)\bar \psi _{+,h }^{\frac{1}{2}}} \right\|\\
				&\le \left\| {\left( {gZ_ + ^{\frac{1}{2}} + GZ_ - ^{\frac{1}{2}} + q\phi _{}^{\frac{1}{2}}} \right)\bar \psi _ + ^{\frac{1}{2}} - \left( {gZ_{+,h }^{\frac{1}{2}} + GZ_{-,h }^{\frac{1}{2}} + q\phi _h^{\frac{1}{2}}} \right)\bar \psi _ + ^{\frac{1}{2}}} \right\|\\
				& \quad + \left\| {\left( {gZ_{+,h }^{\frac{1}{2}} + GZ_{-,h }^{\frac{1}{2}} + q\phi _h^{\frac{1}{2}}} \right)\bar \psi _ + ^{\frac{1}{2}} - \left( {gZ_{+,h }^{\frac{1}{2}} + GZ_{-,h }^{\frac{1}{2}} + q\phi _h^{\frac{1}{2}}} \right)\bar \psi _{+,h }^{\frac{1}{2}}} \right\|\\
				&\le {\left\| {\bar \psi _ + ^{\frac{1}{2}}} \right\|_\infty }\left( {g\left\| {e_{Z, + }^{\frac{1}{2}}} \right\| + G\left\| {e_{Z, - }^{\frac{1}{2}}} \right\| + q\left\| {e_\phi ^{\frac{1}{2}}} \right\|} \right) + {\left\| {gZ_{+,h }^{\frac{1}{2}} + GZ_{-,h }^{\frac{1}{2}} + q\phi _h^{\frac{1}{2}}} \right\|_\infty }\left\| {e_{\psi , + }^{\frac{1}{2}}} \right\|\\
				&\le C\left( {{\tau ^2} + {h^{k + 1}}} \right) + C\left( {\left\| {\eta _{\psi , + }^1} \right\| + \left\| {\eta _{\psi , + }^0} \right\|} \right).
			\end{aligned} \label{G1+}
		\end{align} 
		Substituting \eqref{G1+} and \eqref{R2+} into \eqref{eta_psi+}, we obtain
		\begin{equation} \label{e psi+ M}
			\frac{1}{{2\tau }}\left( {{{\left\| {\eta _{\psi , + }^1} \right\|}^2} - {{\left\| {\eta _{\psi , + }^0} \right\|}^2}} \right) \le {C_1}\left( {{{\left\| {\eta _{\psi , + }^1} \right\|}^2} + {{\left\| {\eta _{\psi , + }^0} \right\|}^2}} \right) + C{\left( {{\tau ^2} + {h^{k + 1}}} \right)^2}.
		\end{equation}
		Since $\eta _{\psi , + }^0 = 0 $, we deduce that for $\tau\le \tau_1:=1/(2C_1)$, 
		\begin{equation}
			{\left\| {\eta _{\psi , + }^1} \right\|} \le C\tau \left( {{\tau ^2} + {h^{k + 1}}} \right). \label{e1 psi+}
		\end{equation}
		By combining the interpolation error estimation, we obtain the following bound 
		\begin{equation}
			\left\| {e_{\psi , + }^1} \right\| \le C\tau \left( {{\tau ^2} + {h^{k + 1}}} \right) + C\left( {{\tau ^2} + {h^{k + 1}}} \right) \le C\left( {{\tau ^2} + {h^{k + 1}}} \right).
		\end{equation}
		Additionally, we have
		\begin{equation}
			\left\| {{D_\tau }\eta _{\psi , + }^1} \right\| = \frac{1}{\tau }\left\| {\eta _{\psi , + }^1} \right\| \le C\left( {{\tau ^2} + {h^{k + 1}}} \right). \label{De1 psi+}
		\end{equation}
		Similarly, by taking $v_{-,h} = \bar \eta _{\psi , - }^{\frac{1}{2}}$ in \eqref{ErrorEq2} with $n=0$, it follows
		\begin{align}
			& \left\| {e_{\psi , - }^1} \right\| \le C\left( {{\tau ^2} + {h^{k + 1}}} \right), \\
			& \left\| {{D_\tau }\eta _{\psi , - }^1} \right\| \le C\left( {{\tau ^2} + {h^{k + 1}}} \right). 
		\end{align}
		By using \eqref{inverse inequality}, \eqref{boundedness of the true solution}, \eqref{e1 psi+}, and the assumption, there exists a constant $h_{2}>0$, such that for $h<h_2$, it holds
		\begin{align}
			& {\left\| {\psi _{\pm,h }^1} \right\|_\infty } \le {\left\| {{R_h}\psi _ \pm ^1} \right\|_\infty } + {\left\| {{R_h}\psi _ \pm ^1 - \psi _{\pm,h }^1} \right\|_\infty } \le D_\psi + Ch^{-\frac{d}{2}} \left\| \eta _{\psi , \pm }^1 \right\| \le D_\psi + 1.\label{Dpsi0+}
		\end{align}
		
		\textbf{Step 2.} We now show that the second step for the intermediate variable $Z$ and potential $\phi$ similarly satisfy the error estimate
		\begin{align}
			& \|e_{\psi,\pm}^2\| \le C(\tau^2 + h^{k+1}), \label{e_psipm2}\\
			& \left\| {{D_\tau }\eta _{\psi , \pm }^2} \right\|  \le C\left( {{\tau ^2} + {h^{k + 1}}} \right), \label{Dtaueta_psipm2}\\
			&\mathop {\max }\limits_{1 \le n \le 2} \left\| {e_{Z, \pm }^{n+\frac{1}{2}}} \right\| \le C\left( {{\tau ^2} + {h^{k + 1}}} \right),\label{e_Zpm3/2}\\
			&\mathop {\max }\limits_{1 \le n \le 2} \left\| {e_\phi ^{n+\frac{1}{2}}} \right\| \le C\left( {{\tau ^2} + {h^{k + 1}}} \right). \label{e_phi3/2}
		\end{align}
		We take $n=1,0$ in \eqref{deErrorEq4}, respectively, and subtract the resulting equations to obtain
		\begin{equation}
			\left( {\eta _{Z, + }^{\frac{3}{2}} - \eta _{Z, + }^{ - \frac{1}{2}},{\chi _{+,h }}} \right) = \left( {S_{2, + }^1 - S_{2, + }^0,{\chi _{+,h }}} \right) + \left( {T_{1, + }^1 - T_{1, + }^0,{\chi _{+,h }}} \right). \label{eta3/2-eta-1/2}
		\end{equation}
		By using the projection error \eqref{Ritz projection error} and the mean value theorem: there exists $t^* \in (t_{-\frac{1}{2}},t_{\frac{3}{2}})$ such that
		\begin{equation} \label{xi Z 1}
			\begin{aligned}
				\left\| {\xi _{Z, + }^{\frac{3}{2}} - \xi _{Z, + }^{ - \frac{1}{2}}} \right\| & = 2\tau \left\| \left(\frac{Z_+^{\frac{3}{2}}-Z_+^{-\frac{1}{2}}}{2\tau} \right) - \text{R}_h \left(\frac{Z_+^{\frac{3}{2}}-Z_+^{-\frac{1}{2}}}{2\tau} \right)\right\| \le C\tau h^{k+1} \left\| \left(\frac{Z_+^{\frac{3}{2}}-Z_+^{-\frac{1}{2}}}{2\tau} \right)\right\|_{k+1} \\ &  \le C\tau h^{k+1} \left\| \partial_t Z_+(x,t^*)\right\|_{k+1}.    
			\end{aligned}
		\end{equation}
		Next, applying a Taylor expansion at $t_1$ gives
		\begin{equation}
			\begin{aligned}
				Z_ + ^{ \frac{3}{2}} -2 Z_ + ^{ 1} + 2 Z_ + ^{ 0} - Z_ + ^{ -\frac{1}{2}} & = \frac{1}{2} \int_{t_1}^{t_{\frac{3}{2}}} \partial_{ttt}Z_+(x,t)(t_{\frac{3}{2}}-t)^2 dx + \int_{t_1}^{t_{0}} \partial_{ttt}Z_+(x,t)(t_{0}-t)^2 dx \\ & \quad + \frac{1}{2} \int_{t_1}^{t_{-\frac{1}{2}}} \partial_{ttt}Z_+(x,t)(t_{-\frac{1}{2}}-t)^2 dx.
			\end{aligned} \label{taylor expansion Z+}
		\end{equation}
		By using the regularity assumption \eqref{regularity assumption} and the expansion in \eqref{taylor expansion Z+}, 
		\begin{equation}\label{S1 1}
			\left\| {S_{1, + }^1 - S_{1, + }^0} \right\| = \left\| Z_ + ^{ \frac{3}{2}} -2 Z_ + ^{ 1} + 2 Z_ + ^{ 0} - Z_ + ^{ -\frac{1}{2}}\right\|  \le C\tau^3.
		\end{equation}
		Combining \eqref{xi Z 1} and \eqref{S1 1}, we have
		\begin{equation} 
			\begin{aligned}
				\left\| {S_{2, + }^1 - S_{2, + }^0} \right\| &\le \left\| {S_{1, + }^1 - S_{1, + }^0} \right\| + \left\| {\xi _{Z, + }^{\frac{3}{2}} - \xi _{Z, + }^{ - \frac{1}{2}}} \right\| \\ & \le C{\tau ^3} + C\tau {h^{k + 1}}{\left\| {{D_\tau }{Z_ + }\left( {x,{t^*}} \right)} \right\|_{k + 1}} \le C\tau \left( {{\tau ^2} + {h^{k + 1}}} \right). \label{S21-S20}
			\end{aligned}
		\end{equation}
		Next, applying \Cref{lemma2}, \eqref{Ritz projection error}, \eqref{boundedness of the true solution} and \eqref{Dpsi0+} yields
		\begin{align}\label{lastT11-T10}
			\begin{aligned}
				&\left\| {T_{1, + }^1 -  T_{1, + }^0} \right\|  = 2\left\|{\left| {\psi _ + ^1} \right|^2} - {\left| {\psi _{+,h }^1} \right|^2 } - {\left| {\psi _ + ^0} \right|^2} + {\left| {\psi _{+,h }^0} \right|^2 } \right\| \\
				& \qquad \le 4 \left\| \psi_+^1-\psi_+^0 \right\|_{\infty} \left\| e_{\psi,+}^1 \right\| + 2\left(\|\psi_{+,h}^0\|_{\infty}+\|\psi_{+,h}^1\|_{\infty} + \|\psi_+^1-\psi_+^0\|_{\infty}\right) \left\| e_{\psi,+}^1 -  e_{\psi,+}^0\right\|,  \\
				& \qquad \le C\tau\left\| D_\tau \eta_{\psi,+}^1\right\| + C\tau\left\| \eta_{\psi,+}^1\right\| + C\tau h^{k+1}, 
			\end{aligned}
		\end{align}
		where we have used the Taylor expansion and the projection error estimate \eqref{Ritz projection error}.
		
		Taking ${\chi _{+,h }} = \eta _{Z, + }^{\frac{3}{2}} + \eta _{Z, + }^{ - \frac{1}{2}}$ in \eqref{eta3/2-eta-1/2}, and combining \eqref{e1 psi+}, \eqref{De1 psi+}, \eqref{S21-S20} and \eqref{lastT11-T10}, we obtain
		\begin{equation}
			\left\| {\eta _{Z, + }^{\frac{3}{2}}} \right\| \le \left\| {\eta _{Z, + }^{ - \frac{1}{2}}} \right\| + \left\| {S_{2, + }^1 - S_{2, + }^0} \right\| + \left\| {T_{1, + }^1 - T_{1, + }^0} \right\| \le C \tau \left( {{\tau ^2} + {h^{k + 1}}} \right).
		\end{equation}
		Together with the projection error estimate \eqref{Ritz projection error}, this implies
		\begin{equation}
			\left\| {e_{Z, + }^{\frac{3}{2}}} \right\| \le \left\| {\eta _{Z, + }^{\frac{3}{2}}} \right\| + \left\| {\xi _{Z, + }^{\frac{3}{2}}} \right\| \le C\left( {{\tau ^2} + {h^{k + 1}}} \right).
			\label{eZ3/2+}
		\end{equation}
		Similarly, applying the same argument to \eqref{deErrorEq5}, we obtain
		\begin{equation}
			\left\| {e_{Z, - }^{\frac{3}{2}}} \right\| \le C\left( {{\tau ^2} + {h^{k + 1}}} \right).  \label{eZ3/2-}
		\end{equation}
		By applying \Cref{propties of A1} to \eqref{ErrorEq3} with $n=1$, and combining the estimates in \eqref{eZ3/2+} and \eqref{eZ3/2-},
		\begin{equation} \label{ephi3/2}
			\left\| {e_\phi ^{\frac{3}{2}}} \right\| \le 4\pi \left| q \right|\left( {\left\| {e_{Z, + }^{\frac{3}{2}}} \right\| + \left\| {e_{Z, - }^{\frac{3}{2}}} \right\|} \right) + C{h^{k + 1}} \le C\left( {{\tau ^2} + {h^{k + 1}}} \right).
		\end{equation}
		
		By \Cref{lem:inftol2}, \eqref{boundedness of the true solution}, \eqref{eZ3/2+}, \eqref{eZ3/2-}, \eqref{ephi3/2} and the assumption, there exists $h_3>0$, such that for $h<h_3$, it holds
		\begin{align}
			&\|Z_{h, \pm }^{\frac{3}{2}}\| \leq \|R_hZ_{\pm }^{\frac{3}{2}}\|_{\infty} + \| \eta_{Z,\pm}^{\frac{3}{2}}\|_{\infty} \leq \|R_hZ_{\pm }^{\frac{3}{2}}\|_{\infty} + Ch^{-\frac{d}{2}}\| \eta_{Z,\pm}^{\frac{3}{2}}\| \leq D_Z+ Ch^{k+1-\frac{d}{2}} \leq D_Z+1, \\
			&\left\| {\phi_h^{\frac{3}{2}}} \right\|_\infty \leq \|R_h\phi^{\frac{3}{2}}\|_{\infty} + Ch^{-\frac{d}{2}}\| \eta_{\phi}^{\frac{3}{2}}\| \leq D_\phi+ Ch^{k+1-\frac{d}{2}} \leq D_\phi+1.
		\end{align}
		When $n =2$, by applying arguments similar to those used from \eqref{eta_psi+} to \eqref{e1 psi+}, with the index changed from $n=1$ to $n=2$. There exists $\tau_2>0$ such that for $\tau\le \tau_2$,
		\begin{equation}
			{\left\| {\eta _{\psi , + }^2} \right\|} \le C\tau \left( {{\tau ^2} + {h^{k + 1}}} \right), \label{e2 psi+}
		\end{equation}
		where the estimate for $\|R_{2,+}^2\|$ follows directly from \eqref{R2+}, while the estimate for $\|J_{2,+}^2\|$ relies on the intermediate results \eqref{eZ3/2+}, \eqref{eZ3/2-}, and \eqref{ephi3/2}.
		Consequently, we obtain
		\begin{align}
			& \|e_{\psi,\pm}^2\| \le C(\tau^2 + h^{k+1}) ,\\
			& \left\| {{D_\tau }\eta _{\psi , \pm }^2} \right\| = \frac{1}{\tau }\left\| {\eta _{\psi , \pm }^2-\eta _{\psi , \pm }^1} \right\| \le C\left( {{\tau ^2} + {h^{k + 1}}} \right).
		\end{align}
		Then there exists a constant $h_4>0$ such that, when $h< h_4$, it holds that
		\begin{equation} \label{BD of psi+2}
			{\left\| {\psi _{\pm,h }^2} \right\|_\infty }  \le D_\psi + 1.
		\end{equation}
		Similarly, following the steps from \eqref{eta3/2-eta-1/2} to \eqref{ephi3/2} and updating the index accordingly, we obtain
		\begin{align}
			\left\| {e_{Z, \pm }^{\frac{5}{2}}} \right\| \le C\left( {{\tau ^2} + {h^{k + 1}}} \right), \\
			\left\| {e_{\phi}^{\frac{5}{2}}} \right\| \le C\left( {{\tau ^2} + {h^{k + 1}}} \right).
		\end{align}
		Therefore, the estimates \eqref{intro psi}–\eqref{intro phi} hold for $n=2$.
		
		\textbf{Step 3.} Assume that for $0 \le n \le m$ with $m > 2$, the following error estimates hold
		\begin{align}
			&\mathop {\max }\limits_{0 \le n \le m} \left\| {e_{\psi , \pm }^{n }} \right\| \le C\left( {{\tau ^2} + {h^{k + 1}}} \right), \label{intro psi}\\
			&\mathop {\max }\limits_{0 \le n \le m} \left\| {{D_\tau }\eta_{\psi , \pm }^{n }} \right\| \le C\left( {{\tau ^2} + {h^{k + 1}}} \right), \label{intro Dpsi}\\
			&\mathop {\max }\limits_{0 \le n \le m} \left\| {e_{Z, \pm }^{n + \frac{1}{2}}} \right\| \le C\left( {{\tau ^2} + {h^{k + 1}}} \right), \label{intro Z}\\
			&\mathop {\max }\limits_{0 \le n \le m} \left\| {e_\phi ^{n + \frac{1}{2}}} \right\| \le C\left( {{\tau ^2} + {h^{k + 1}}} \right). \label{intro phi}
		\end{align}
		
		In general, for $n>2$, arguments similar to those in Step 2 show that the following boundedness conditions hold as a consequence of the induction hypotheses \eqref{intro psi}, \eqref{intro Z}, and \eqref{intro phi}. In particular, there exists a constant $h_5>0$ such that, when $h<h_5$, it follows for $0\le n \leq m$
		\begin{align}
			&{\left\| {\psi _{h, \pm }^n} \right\|_\infty } \le {\left\| {{R_h}\psi _ \pm ^n} \right\|_\infty } + {\left\| {{R_h}\psi _ \pm ^n - \psi _{h, \pm }^n} \right\|_\infty } \le D_\psi+1, \label{Boud psi}\\
			&{\left\| {Z_{h, \pm }^{n + \frac{1}{2}}} \right\|_\infty } \le {\left\| {{R_h}Z_ \pm ^{n + \frac{1}{2}}} \right\|_\infty } + {\left\| {{R_h}Z_ \pm ^{n + \frac{1}{2}} - Z_{h, \pm }^{n + \frac{1}{2}}} \right\|_\infty } \le D_Z+1, \label{Boud Z}\\
			&{\left\| {\phi _h^{n + \frac{1}{2}}} \right\|_\infty } \le {\left\| {{R_h}\phi _{}^{n + \frac{1}{2}}} \right\|_\infty } + {\left\| {{R_h}\phi _{}^{n + \frac{1}{2}} - \phi _h^{n + \frac{1}{2}}} \right\|_\infty } \le D_\phi+1. \label{Boud phi}
		\end{align}
		
		Next, we consider the equations at $t_{n+1}$ and $t_{n-1}$ with $2<n\le m$ for \eqref{deErrorEq1}. Subtracting the two resulting equations yields
		\begin{equation}
			\begin{aligned}
				\left\langle \mathbf{i}{{D_\tau }\eta _{\psi , + }^{n + 1} - \mathbf{i}{D_\tau }\eta _{\psi , + }^{n - 1},{u_h}} \right\rangle &= \frac{1}{2}{A_0}\left( {\bar \eta _{\psi , + }^{n + \frac{1}{2}} - \bar \eta _{\psi , + }^{n - \frac{3}{2}},{u_h}} \right) \\ & \quad + \left\langle {J_{1, + }^{n + 1} - J_{1, + }^{n - 1},{u_h}} \right\rangle + \left\langle {R_{2, + }^{n + 1} - R_{2, + }^{n - 1},{u_h}} \right\rangle\\
				&= \frac{\tau }{4}{A_0}\left( {{D_\tau }\eta _{\psi , + }^{n + 1} + 2{D_\tau }\eta _{\psi , + }^n + {D_\tau }\eta _{\psi , + }^{n - 1},{u_h}} \right) \\ & \quad + \left\langle {J_{1, + }^{n + 1} - J_{1, + }^{n - 1},{u_h}} \right\rangle + \left\langle {R_{2, + }^{n + 1} - R_{2, + }^{n - 1},{u_h}} \right\rangle.
			\end{aligned}
		\end{equation}
		This relation can be written in pointwise form as
		\begin{equation}
			{D_\tau }\eta _{\psi , + }^{n + 1} - {D_\tau }\eta _{\psi , + }^{n - 1} = \mathbf{i}\frac{\tau }{4}{\Delta _h}\left( {{D_\tau }\eta _{\psi , + }^{n + 1} + 2{D_\tau }\eta _{\psi , + }^n + {D_\tau }\eta _{\psi , + }^{n - 1}} \right) + \Gamma _{1, + }^{n + 1} + \Gamma _{2, + }^{n + 1}, \label{pointwiseform}
		\end{equation}
		where
		\begin{equation*}
			\Gamma _{1, + }^{n + 1} =  - \mathbf{i}{P_h}\left( {R_{2, + }^{n + 1} - R_{2, + }^{n - 1}} \right),\quad \Gamma _{2, + }^{n + 1} =  - \mathbf{i}{P_h}\left( {J_{1, + }^{n + 1} - J_{1, + }^{n - 1}} \right),
		\end{equation*}
		and $P_h: \mathbf{L}^2(\Omega) \rightarrow \mathbf{V}$ denotes the $L^2$ projection.
		
		According to definition of $S_h$ and $T_h$ in \eqref{definitionS} and \eqref{definitionT}, equation \eqref{pointwiseform} can be written as
		\begin{equation} \label{D eta S}
			{S_h}\left( {{D_\tau }\eta _{\psi , + }^{n + 1}} \right) = \left( {{T_h} - {S_h}} \right)\left( {{D_\tau }\eta _{\psi , + }^n} \right) + {T_h}\left( {{D_\tau }\eta _{\psi , + }^{n - 1}} \right) + \Gamma _{1, + }^{n + 1} + \Gamma _{2, + }^{n + 1}.    
		\end{equation}
		\Cref{lemma: B} shows that the operator $S_h$ is invertible. Multiplying both sides of \eqref{D eta S} by $S_h^{-1}$ yields
		\begin{equation}\label{D eta S with B}
			{D_\tau }\eta _{\psi , + }^{n + 1} = \left( {{B_h} - {I_h}} \right)\left( {{D_\tau }\eta _{\psi , + }^n} \right) + {B_h}\left( {{D_\tau }\eta _{\psi , + }^{n - 1}} \right) + S_h^{ - 1}\left( {\Gamma _{1, + }^{n + 1} + \Gamma _{2, + }^{n + 1}} \right).
		\end{equation}
		Applying \Cref{lemma: sequence lemma} to \eqref{D eta S with B}, we obtain
		\begin{equation}
			\left\| {{D_\tau }\eta _{\psi , + }^{n + 1}} \right\| + \left\| {{D_\tau }\eta _{\psi , + }^n} \right\| \le 2\left\| {{S_h}{D_\tau }\eta _{\psi , + }^2} \right\| + 2\left\| {{S_h}{D_\tau }\eta _{\psi , + }^1} \right\| + 2\sum\limits_{n = 2}^m {\left( \|{\Gamma _{1, + }^{n + 1} \|+ \|\Gamma _{2, + }^{n + 1}} \|\right)}. \label{etam+1_and_etam}
		\end{equation}
		Similarly, we have
		\begin{equation}
			\left\| {{D_\tau }\eta _{\psi , - }^{n + 1}} \right\| + \left\| {{D_\tau }\eta _{\psi , - }^n} \right\| \le 2\left\| {{S_h}{D_\tau }\eta _{\psi , - }^2} \right\| + 2\left\| {{S_h}{D_\tau }\eta _{\psi , - }^1} \right\| + 2\sum\limits_{n = 2}^m {\left( \| {\Gamma _{1, - }^{n + 1}\| +\| \Gamma _{2, - }^{n + 1}} \| \right)}, \label{etam-1_and_etam}
		\end{equation}
		where
		\begin{equation*}
			\Gamma _{1, - }^{n + 1} =  - \mathbf{i}{P_h}\left( {R_{2, - }^{n + 1} - R_{2, - }^{n - 1}} \right),\quad \Gamma _{2, - }^{n + 1} =  - \mathbf{i}{P_h}\left( {J_{1, - }^{n + 1} - J_{1, - }^{n - 1}} \right).
		\end{equation*}
		
		\textbf{Step 4.} 
		In this step, we estimate the terms $\|\Gamma _{1, \pm }^{n + 1}\|$ and $\Gamma _{2, \pm }^{n + 1}$ in \eqref{etam+1_and_etam} and \eqref{etam-1_and_etam}, respectively.
		\begin{align}
			&\left\| {\Gamma _{1, \pm }^{n + 1}} \right\| \le \;\left\| {R_{2, \pm }^{n + 1} - R_{2, \pm }^{n - 1}} \right\| \le \left\| {R_{1, \pm }^{n + 1} - R_{1, \pm }^{n - 1}} \right\| + \left\| {{D_\tau }\xi _{\psi , \pm }^{n + 1} - {D_\tau }\xi _{\psi , \pm }^{n - 1}} \right\|, \label{gamma1 n+1}\\
			&\left\| {\Gamma _{2, \pm }^{n + 1}} \right\| \le \left\| {J_{1, \pm }^{n + 1} - J_{1, \pm }^{n - 1}} \right\|. \label{gamma2 n+1}
		\end{align}
		To estimate $\|{\Gamma _{1, + }^{n + 1}}\|$, we need to prove
		\begin{align}
			&\left\| {R_{1, + }^{n + 1} - R_{1, + }^n} \right\| \le C{\tau ^3} \label{R1+ M+1- R1+ M}, \\
			&\left\| {{D_\tau }\xi _{\psi , + }^{n + 1} - {D_\tau }\xi _{\psi , + }^{n - 1}} \right\| \le C\tau {h^{k + 1}}. \label{DXI PSI M+1 - DXI PSI M-1+}
		\end{align}
		Firstly, we notice that
		\begin{equation}\label{expansion of R1+}
			\begin{aligned}
				\left\| {R_{1, + }^{n + 1} - R_{1, + }^n} \right\| & \le \left\| \left( {{\partial _t}\psi _ + ^{n + \frac{1}{2}} - {D_\tau }\psi _ + ^{n + 1}} \right) - \left( {{\partial _t}\psi _ + ^{n - \frac{1}{2}} - {D_\tau }\psi _ + ^{n}} \right)  \right\| \\ & \quad + \frac{1}{2} \left\| \Delta  \left( {\bar \psi _ + ^{n + \frac{1}{2}} - \psi _ + ^{n + \frac{1}{2}}} - {\bar \psi _ + ^{n - \frac{1}{2}} + \psi _ + ^{n - \frac{1}{2}}} \right)  \right\| \\ & \quad +\left((|g|+|G|)C_Z + qC_\phi\right) \left\| \left( {\bar \psi _ + ^{n + \frac{1}{2}} - \psi _ + ^{n + \frac{1}{2}}} - {\bar \psi _ + ^{n - \frac{1}{2}} + \psi _ + ^{n - \frac{1}{2}}} \right)  \right\|.
			\end{aligned}
		\end{equation}
		Then, we employ the Taylor expansion with the integral remainder
		\begin{align}
			\begin{aligned}\label{partial-Dtau}
				&\left( {{\partial _t}\psi _ + ^{n + \frac{1}{2}} - {D_\tau }\psi _ + ^{n + 1}} \right) - \left( {{\partial _t}\psi _ + ^{n - \frac{1}{2}} - {D_\tau }\psi _ + ^{n}} \right) \\ &= \frac{1}{2}\int_{t_n}^{t_{n+\frac{1}{2}}} ({t_{n+\frac{1}{2}}}-t)^2 \partial_{tttt}\psi_+(t)dt - \frac{1}{2}\int_{t_n}^{t_{n-\frac{1}{2}}} ({t_{n-\frac{1}{2}}}-t)^2 \partial_{tttt}\psi_+(t)dt \\ & \quad - \frac{1}{6\tau}\int_{t_n}^{t_{n+1}} ({t_{n+1}}-t)^3 \partial_{tttt}\psi_+(t)dt - \frac{1}{6\tau}\int_{t_n}^{t_{n-1}} ({t_{n-1}}-t)^3 \partial_{tttt}\psi_+(t)dt,
			\end{aligned} \\
			\begin{aligned} \label{psi 4}
				& {\bar \psi _ + ^{n + \frac{1}{2}} - \psi _ + ^{n + \frac{1}{2}}} - {\bar \psi _ + ^{n - \frac{1}{2}} + \psi _ + ^{n - \frac{1}{2}}}  \\ &= \frac{1}{12}\int_{t_n}^{t_{n+1}} ({t_{n+1}}-t)^3 \partial_{tttt}\psi_+(t)dt - \frac{1}{6}\int_{t_n}^{t_{n+\frac{1}{2}}} ({t_{n+\frac{1}{2}}}-t)^3 \partial_{tttt}\psi_+(t)dt \\ & \quad + \frac{1}{12}\int_{t_n}^{t_{n-1}} ({t_{n-1}}-t)^3 \partial_{tttt}\psi_+(t)dt + \frac{1}{6}\int_{t_n}^{t_{n-\frac{1}{2}}} ({t_{n-\frac{1}{2}}}-t)^3 \partial_{tttt}\psi_+(t)dt.
			\end{aligned}
		\end{align}
		Due to the regularity assumption \eqref{regularity assumption} of $\psi_+$, all integral remainder terms are $O(\tau^3)$. Substituting \eqref{partial-Dtau} and \eqref{psi 4} into \eqref{expansion of R1+}, we derive \eqref{R1+ M+1- R1+ M}.
		Moreover, notice that
		\begin{equation}
			D_\tau \psi_+^{n+1}- D_\tau \psi_+^{n-1} = \frac{1}{\tau}(\psi_+^{n+1}-\psi_+^{n}-\psi_+^{n-1}+\psi_+^{n-2}) = \frac{1}{\tau}\int_0^\tau \left( \int_{t_{n-2}+s}^{t_n+s} \partial_{tt} \psi_+(t)dt \right) ds. \label{preceding equation}
		\end{equation}
		By using the projection error estimate \eqref{Ritz projection error} and equation \eqref{preceding equation}, it follows that
		\begin{equation}
			\begin{aligned}
				\left\| {{D_\tau }\xi _{\psi , + }^{n + 1} - {D_\tau }\xi _{\psi , + }^{n - 1}} \right\| = \left\| R_h \left(D_\tau \psi_+^{n+1}- D_\tau \psi_+^{n-1} \right) -\left(D_\tau \psi_+^{n+1}- D_\tau \psi_+^{n-1} \right) \right\| \le C\tau {h^{k + 1}},
			\end{aligned}
		\end{equation}
		which completes the proof of \eqref{DXI PSI M+1 - DXI PSI M-1+}. Then, combining \eqref{R1+ M+1- R1+ M} and \eqref{DXI PSI M+1 - DXI PSI M-1+} yields
		\begin{equation}
			\left\| {\Gamma _{1, + }^{n + 1}} \right\| \le \;\left\| {R_{1, + }^{n + 1} - R_{1, + }^n} \right\| + \left\| {R_{1, + }^n - R_{1, + }^{n - 1}} \right\| + \left\| {{D_\tau }\xi _{\psi , + }^{n + 1} - {D_\tau }\xi _{\psi , + }^{n - 1}} \right\| \le C\tau \left( {{\tau ^2} + {h^{k + 1}}} \right).   \label{Gamma1+} 
		\end{equation}
		Similarly, we also have the following estimate
		\begin{equation}
			\left\| {\Gamma _{1, - }^{n + 1}} \right\| \le C\tau \left( {{\tau ^2} + {h^{k + 1}}} \right). 
		\end{equation}
		Next, by \eqref{gamma2 n+1}, we estimate $\|{\Gamma _{2, + }^{n + 1}}\|$ as follows
		\begin{equation}
			\begin{aligned}
				\left\| {J_{1, + }^{n + 1} - J_{1, + }^{n - 1}} \right\| & \le \left\| {g\left( {Z_ + ^{n + \frac{1}{2}}\bar \psi _ + ^{n + \frac{1}{2}} - Z_{+,h }^{n + \frac{1}{2}}\bar \psi _{+,h }^{n + \frac{1}{2}}} \right) - g\left( {Z_ + ^{n - \frac{3}{2}}\bar \psi _ + ^{n - \frac{3}{2}} - Z_{+,h }^{n - \frac{3}{2}}\bar \psi _{+,h }^{n - \frac{3}{2}}} \right)} \right\|\\
				& \quad + \left\| G{\left( {Z_ - ^{n + \frac{1}{2}}\bar \psi _ + ^{n + \frac{1}{2}} - Z_ - ^{n + \frac{1}{2}}\bar \psi _{+,h }^{n + \frac{1}{2}}} \right) - G\left( {Z_ - ^{n + \frac{1}{2}}\bar \psi _ + ^{n - \frac{3}{2}} - Z_ - ^{n + \frac{1}{2}}\bar \psi _{+,h }^{n - \frac{3}{2}}} \right)} \right\|\\
				& \quad + \left\| q{\left( {\phi _{}^{n + \frac{1}{2}}\bar \psi _ + ^{n + \frac{1}{2}} - \phi _{h}^{n + \frac{1}{2}}\bar \psi _{+,h }^{n + \frac{1}{2}}} \right) - q\left( {\phi _{}^{n + \frac{1}{2}}\bar \psi _ + ^{n - \frac{3}{2}} - \phi _{h}^{n + \frac{1}{2}}\bar \psi _{+,h }^{n - \frac{3}{2}}} \right)} \right\| \\ & :=I_1 + I_2 + I_3.
			\end{aligned}
		\end{equation}
		To estimate $I_1$, upon rewriting and using the triangle inequality, it follows
		\begin{align}
			\begin{aligned}
				I_1 &\le
				\left\| {g\left( {Z_ + ^{n + \frac{1}{2}} - Z_{ + ,h}^{n + \frac{1}{2}}} \right)\left( {\bar \psi _ + ^{n + \frac{1}{2}} - \bar \psi _ + ^{n - \frac{3}{2}}} \right)} \right\| \\ &\quad + \left\| {g\left( {Z_ + ^{n + \frac{1}{2}} - Z_ + ^{n - \frac{3}{2}}} \right)\left( {\bar \psi _ + ^{n - \frac{3}{2}} - \bar \psi _{ + ,h}^{n - \frac{3}{2}}} \right)} \right\|\\
				& \quad + \left\| {gZ_{ + ,h}^{n + \frac{1}{2}}\left( {\bar \psi _ + ^{n + \frac{1}{2}} - \bar \psi _ + ^{n - \frac{3}{2}} - \bar \psi _{ + ,h}^{n + \frac{1}{2}} + \bar \psi _{ + ,h}^{n - \frac{3}{2}}} \right)} \right\| \\& \quad + \left\| {g\left( {Z_ + ^{n + \frac{1}{2}} - Z_ + ^{n - \frac{3}{2}} - Z_{ + ,h}^{n + \frac{1}{2}} + Z_{ + ,h}^{n - \frac{3}{2}}} \right)\bar \psi _{ + ,h}^{n - \frac{3}{2}}} \right\| \\
				& :=I_{1,1} + I_{1,2} + I_{1,3} + I_{1,4}.
			\end{aligned} \label{I1 first}
		\end{align}
		Based on the inductive hypothesis in \eqref{TH3.7 Z} and \eqref{TH3.7 psi}, and applying the Taylor expansion, we estimate the terms $I_{1,1}$ and $I_{1,2}$ as follows
		\begin{align}
			& I_{1,1}  \le g\left\| {Z_ + ^{n + \frac{1}{2}} - Z_{ + ,h}^{n + \frac{1}{2}}} \right\| \left\| \frac{(\psi_+^{n+1}+\psi_+^n)-(\psi_+^{n-1}+\psi_+^{n-2})}{2} \right\| \le C\tau (\tau^2 + h^{k+1}), \label{K1} \\
			& I_{1,2}  \le g \tau \left\| \frac{(\psi_+^{n-1}+\psi_+^{n-2})-(\psi_{+,h}^{n-1}+\psi_{+,h}^{n-2})}{2} \right\| \le C\tau (\|\eta^{n-1}_{\psi,+}\| + \|\eta^{n-2}_{\psi,+}\|) + C\tau h^{k+1} \label{K2}.
		\end{align}
		Using the projection error \eqref{Ritz projection error} and the mean value theorem, we know that there exists $t_1^* \in (t^n, t^{n+1})$, such that
		\begin{equation} \label{D tau xi Z +}
			\left\| D_{\tau} \xi_{Z,+}^{n+1} \right\| = \left\| D_{\tau} Z_+^{n+1} - R_h D_{\tau} Z_+^{n+1} \right\| \le Ch^{k+1} \left\| D_{\tau} Z_+^{n+1} \right\|_{k+1} =  Ch^{k+1} \left\| \partial_t Z_+(x,t_1^*) \right\|_{k+1}.
		\end{equation}
		Then, by utilizing \eqref{Boud Z} and \eqref{D tau xi Z +}, we proceed to estimate $I_{1,3}$,
		\begin{equation}
			\begin{aligned}
				I_{1,3} &\le C \left\| \left( \bar \psi _ + ^{n + \frac{1}{2}} - \bar \psi _ + ^{n - \frac{3}{2}}\right) - \left(\bar \psi _{ + ,h}^{n + \frac{1}{2}} - \bar \psi _{ + ,h}^{n - \frac{3}{2}} \right) \right\| \le C \tau \left\| D_{\tau}e_{\psi,+}^{n+1} + 2D_{\tau}e_{\psi,+}^{n} + D_{\tau}e_{\psi,+}^{n-1}\right\| \\ &\le C \tau (\left\| D_{\tau}\eta_{\psi,+}^{n+1}\right\| +\left\| D_{\tau}\eta_{\psi,+}^{n}\right\|+\left\| D_{\tau}\eta_{\psi,+}^{n-1}\right\|)+ C\tau h^{k+1}. 
			\end{aligned} \label{K3}
		\end{equation}
		To estimate $I_{1,4}$, we notice that there exists $t_2^* \in (t^{n-\frac{3}{2}}, t^{n+\frac{1}{2}})$, 
		\begin{equation}
			\left\| \xi_{Z,+}^{n+\frac{1}{2}} - \xi_{Z,+}^{n-\frac{3}{2}} \right\| = 2 \tau \left\| \frac{Z_+^{n+\frac12}-Z_+^{n-\frac32}}{2\tau} - R_h\left(\frac{Z_+^{n+\frac12}-Z_+^{n-\frac32}}{2\tau}\right) \right\| \le C\tau h^{k+1} \left\| \partial_t Z_+(x,t_2^*) \right\|_{k+1},
		\end{equation}
		Next, by taking the difference of \eqref{deErrorEq4} between the time levels $ {n}$ and ${n-1}$, and setting $\chi_{+,h} = \eta_{Z,+}^{n+\frac{1}{2}} - \eta_{Z,+}^{n-\frac{3}{2}}$, we have
		\begin{equation}
			\left\| \eta_{Z,+}^{n+\frac{1}{2}} - \eta_{Z,+}^{n-\frac{3}{2}} \right\|^2 = (S_{2,+}^{n}-S_{2,+}^{n-1},\eta_{Z,+}^{n+\frac{1}{2}} - \eta_{Z,+}^{n-\frac{3}{2}}) + (T_{1,+}^{n}-T_{1,+}^{n-1},\eta_{Z,+}^{n+\frac{1}{2}} - \eta_{Z,+}^{n-\frac{3}{2}}).
		\end{equation}
		Applying Cauchy-Schwartz inequality, we obtain
		\begin{equation}
			\left\| \eta_{Z,+}^{n+\frac{1}{2}} - \eta_{Z,+}^{n-\frac{3}{2}} \right\| \le \left\|S_{2,+}^{n}-S_{2,+}^{n-1}\right\| + \left\|T_{1,+}^{n}-T_{1,+}^{n-1}\right\|.
		\end{equation}
		Similar to \eqref{lastT11-T10}, by applying \eqref{defT+} and Lemma \ref{lemma2}, we have the following estimate
		\begin{equation}
			\left\| {T_{1, + }^{n} - T_{1, + }^{n-1}} \right\| \le C\tau \left\| {{D_\tau }\eta _{\psi , + }^n} \right\| + C\tau \left\| {\eta _{\psi , + }^n} \right\| + C\tau {h^{k + 1}}. 
		\end{equation}
		Since $S_{2,+}^n$ is independent of the numerical solution, we can estimate $\left\| {S_{2, + }^{n} - S_{2, + }^{n-1}} \right\|$ similarly to \eqref{S21-S20}, obtaining
		\begin{equation}
			\left\| {S_{2, + }^{n} - S_{2, + }^{n-1}} \right\| \le \left\| {S_{1, + }^{n} - S_{1, + }^{n-1}} \right\| + \left\| {\xi _{Z, + }^{n+\frac{1}{2}} - \xi _{Z, + }^{ n- \frac{3}{2}}} \right\|  \le C\tau \left( {{\tau ^2} + {h^{k + 1}}} \right). \label{S21-S20n}
		\end{equation}
		By \eqref{Boud psi}, it follows $\|\bar \psi _{ + ,h}^{n - \frac{3}{2}}\|_\infty \le D_\psi+1$. Therefore, 
		\begin{equation}
			\begin{aligned}
				I_{1,4} &\le C \left\| e_{Z,+}^{n+\frac{1}{2}} - e_{Z,+}^{n-\frac{3}{2}}\right\| \le C \left\| \eta_{Z,+}^{n+\frac{1}{2}} - \eta_{Z,+}^{n-\frac{3}{2}}\right\| + C \left\| \xi_{Z,+}^{n+\frac{1}{2}} - \xi_{Z,+}^{n-\frac{3}{2}} \right\| \\ &\le C \tau (\left\| D_{\tau}\eta_{Z,+}^{n}\right\|+\left\| \eta_{Z,+}^{n}\right\|)+ C\tau (\tau^2 + h^{k+1}).
			\end{aligned} \label{K4}
		\end{equation}
		Combining \eqref{K1}, \eqref{K2}, \eqref{K3}, and \eqref{K4} yields
		\begin{equation}
			I_1 \le C\tau (\|\eta^{n}_{\psi,+}\| + \|\eta^{n-1}_{\psi,+}\| +\|\eta^{n-2}_{\psi,+}\| ) + C \tau \left(\left\| D_{\tau}\eta_{\psi,+}^{n+1}\right\| +\left\| D_{\tau}\eta_{\psi,+}^{n}\right\| +\left\| D_{\tau}\eta_{\psi,+}^{n-1}\right\|\right) + C\tau (\tau^2 + h^{k+1}). \label{I1 final}
		\end{equation}
		Similar to \eqref{K3}, we estimate $I_2$ as
		\begin{equation}
			I_2 \le C\left\| {\left( {\bar \psi _ + ^{n + \frac{1}{2}} - \bar \psi _{+,h }^{n + \frac{1}{2}}} \right) - \left( {\bar \psi _ + ^{n - \frac{3}{2}} - \bar \psi _{+,h }^{n - \frac{3}{2}}} \right)} \right\| \le C \tau \left(\left\| D_{\tau}\eta_{\psi,+}^{n+1}\right\| +\left\| D_{\tau}\eta_{\psi,+}^{n}\right\|+\left\| D_{\tau}\eta_{\psi,+}^{n-1}\right\|\right)+ C\tau h^{k+1}, \label{I2 final}
		\end{equation}
		by using the same estimation process as for $I_1$ from \eqref{I1 first} to \eqref{I1 final}, we can estimate $I_3$ as follows
		\begin{equation}
			I_3 \le C\tau (\|\eta^{n}_{\psi,+}\| + \|\eta^{n-1}_{\psi,+}\| +\|\eta^{n-2}_{\psi,+}\| ) + C \tau \left(\left\| D_{\tau}\eta_{\psi,+}^{n+1}\right\| +\left\| D_{\tau}\eta_{\psi,+}^{n}\right\| +\left\| D_{\tau}\eta_{\psi,+}^{n-1}\right\|\right) + C\tau (\tau^2 + h^{k+1}). \label{I3 final}
		\end{equation}
		Finally, combining \eqref{I1 final}, \eqref{I2 final}, and \eqref{I3 final}, we obtain the following estimate for the term $\|{\Gamma _{2, + }^{n + 1}}\| $:
		\begin{equation}
			\begin{aligned}
				\left\| {\Gamma _{2, + }^{n + 1}} \right\| &\le \left\| {J_{1, + }^{n + 1} - J_{1, + }^{n - 1}} \right\|\\
				&\le C\tau \left( {\left\| {\eta _{\psi , + }^n} \right\| + \left\| {\eta _{\psi , + }^{n - 1}} \right\| + \left\| {\eta _{\psi , + }^{n - 2}} \right\|} \right) \\ & \quad + C\tau \left( {\left\| {{D_\tau }\eta _{\psi , + }^{n + 1}} \right\| + \left\| {{D_\tau }\eta _{\psi , + }^n} \right\| + \left\| {{D_\tau }\eta _{\psi , + }^{n - 1}} \right\|} \right) + C\tau \left( {{\tau ^2} + {h^{k + 1}}} \right), \label{Gamma2+}
			\end{aligned}    
		\end{equation}
		and similarly, for $\| {\Gamma _{2, - }^{n + 1}} \|$,
		\begin{equation}
			\begin{aligned}
				\left\| {\Gamma _{2, - }^{n + 1}} \right\| 
				&\le C\tau \left( {\left\| {\eta _{\psi , - }^n} \right\| + \left\| {\eta _{\psi , - }^{n - 1}} \right\| + \left\| {\eta _{\psi , - }^{n - 2}} \right\|} \right) \\ & \quad + C\tau \left( {\left\| {{D_\tau }\eta _{\psi , - }^{n + 1}} \right\| + \left\| {{D_\tau }\eta _{\psi , - }^n} \right\| + \left\| {{D_\tau }\eta _{\psi , - }^{n - 1}} \right\|} \right) + C\tau \left( {{\tau ^2} + {h^{k + 1}}} \right).
			\end{aligned}    
		\end{equation}
		
		\textbf{Step 5.} In this step, we establish the estimates
		\begin{equation}
			\begin{array}{ll}
				&\left\| {e_{\psi , \pm }^{m + 1}} \right\| \le C\left( {{\tau ^2} + {h^{k + 1}}} \right),\\
				&\left\| {{D_\tau }\eta _{\psi , \pm }^{m + 1}} \right\| \le C\left( {{\tau ^2} + {h^{k + 1}}} \right). \label{step5conclusion}
			\end{array}
		\end{equation}
		By substituting \eqref{Gamma1+} and \eqref{Gamma2+} into \eqref{etam+1_and_etam}, we derive the following relationship
		\begin{equation}
			\left\| {{D_\tau }\eta _{\psi , + }^{m + 1}} \right\| + \left\| {{D_\tau }\eta _{\psi , + }^m} \right\| \le C\tau \sum\limits_{n = 1}^m {\left( {\left\| {\eta _{\psi , + }^n} \right\| + \left\| {{D_\tau }\eta _{\psi , + }^{n + 1}} \right\| + \left\| {{D_\tau }\eta _{\psi , + }^n} \right\|} \right)}  + C\left( {{\tau ^2} + {h^{k + 1}}} \right).
		\end{equation}
		We observe that the initial steps in \eqref{etam+1_and_etam} require separate treatment. Specifically, we have
		\begin{equation} \label{Sh D eta2+}
			\left\| {{S_h}{D_\tau }\eta _{\psi , + }^2} \right\| + \left\| {{S_h}{D_\tau }\eta _{\psi , + }^1} \right\| \le \left\| {{D_\tau }\eta _{\psi , + }^2} \right\| + \left\| {{D_\tau }\eta _{\psi , + }^1} \right\| \le \left\| {{D_\tau }\eta _{\psi , + }^2} \right\| + C\left( {{\tau ^2} + {h^{k + 1}}} \right).
		\end{equation}
		Then we take $v_{+,h} = \bar \eta _{\psi , + }^{n + \frac{1}{2}}$ in \eqref{deErrorEq}, and equating the coefficients of the imaginary part, it holds
		\begin{equation}
			\begin{aligned}
				\frac{1}{{2\tau }}\left( {{{\left\| {\eta _{\psi , + }^{n + 1}} \right\|}^2} - {{\left\| {\eta _{\psi , + }^n} \right\|}^2}} \right) &= {\mathop{\rm Im}\nolimits} \left( {J_{1, + }^{n + 1},\bar \eta _{\psi , + }^{n + \frac{1}{2}}} \right) + {\mathop{\rm Im}\nolimits} \left( {R_{2, + }^{n + 1},\bar \eta _{\psi , + }^{n + \frac{1}{2}}} \right)\\
				& \le \frac{1}{2}\left\| {J_{1, + }^{n + 1}} \right\|\left\| {\eta _{\psi , + }^{n + 1} + \eta _{\psi , +}^n} \right\| + \frac{1}{2}\left\| {R_{2, + }^{n + 1}} \right\|\left\| {\eta _{\psi , + }^{n + 1} + \eta _{\psi , + }^n} \right\|.
			\end{aligned}
		\end{equation}
		Using the same approach as in estimating \eqref{G1+} and leveraging the induction hypothesis, we obtain the estimate
		\begin{align}
			\begin{aligned}
				\left\| {J_{1, + }^{n+1}} \right\| & \le {\left\| {\bar \psi _ + ^{n+\frac{1}{2}}} \right\|_\infty }\left( {g\left\| {e_{Z, + }^{n+\frac{1}{2}}} \right\| + G\left\| {e_{Z, - }^{n+\frac{1}{2}}} \right\| + q\left\| {e_\phi ^{n+\frac{1}{2}}} \right\|} \right) \\ & \quad+ {\left\| {gZ_{+,h }^{n+\frac{1}{2}} + GZ_{-,h }^{n+\frac{1}{2}} + q\phi _h^{n+\frac{1}{2}}} \right\|_\infty }\left\| {e_{\psi , + }^{n+\frac{1}{2}}} \right\|\\
				&\le C\left( {{\tau ^2} + {h^{k + 1}}} \right) + C\left( {\left\| {\eta _{\psi , + }^{n+1}} \right\| + \left\| {\eta _{\psi , + }^n} \right\|} \right),
			\end{aligned}
		\end{align}
		from this, we deduce
		\begin{equation} \label{eta psi n+1 - eta psi n}
			\left\| {\eta _{\psi , + }^{n + 1}} \right\| - \left\| {\eta _{\psi , + }^n} \right\| \le \tau \left\| {J_{1, + }^{n + 1}} \right\| + \tau \left\| {R_{2, + }^{n + 1}} \right\| \le C\tau \left( {\left\| {\eta _{\psi , + }^{n + 1}} \right\| + \left\| {\eta _{\psi , + }^n} \right\|} \right) + C\tau \left( {{\tau ^2} + {h^{k + 1}}} \right).
		\end{equation}
		For the case $n=1$, we obtain
		\begin{equation}
			\left\| {\eta _{\psi , + }^{2}} \right\| - \left\| {\eta _{\psi , + }^1} \right\|  \le C_2\tau \left( {\left\| {\eta _{\psi , + }^{2}} \right\| + \left\| {\eta _{\psi , + }^1} \right\|} \right) + C\tau \left( {{\tau ^2} + {h^{k + 1}}} \right).
		\end{equation}
		Applying \eqref{e1 psi+} and assuming $\tau < \tau_3:=\min\{\tau_1,1/(C_2)\}$, we further deduce
		\begin{equation}
			\left\| {\eta _{\psi , + }^{2}} \right\| \le C\tau \left( {{\tau ^2} + {h^{k + 1}}} \right).
		\end{equation}
		and establish the bound
		\begin{equation} \label{Dtau eta2+}
			\left\| {D_\tau\eta _{\psi , + }^{2}} \right\| \le \frac{1}{\tau}\left(\left\| {\eta _{\psi , + }^{2}} \right\| +\left\| {\eta _{\psi , + }^{1}} \right\|\right) \le  C \left( {{\tau ^2} + {h^{k + 1}}} \right).
		\end{equation}
		Using \eqref{Dtau eta2+}, equation \eqref{Sh D eta2+} simplifies to
		\begin{equation} \label{Sh D eta2+ simplified}
			\left\| {{S_h}{D_\tau }\eta _{\psi , + }^2} \right\| + \left\| {{S_h}{D_\tau }\eta _{\psi , + }^1} \right\| \le C\left( {{\tau ^2} + {h^{k + 1}}} \right).
		\end{equation}
		Summing \eqref{eta psi n+1 - eta psi n} over $n$ from $1$ to $m$, we obtain
		\begin{equation}\label{etapsim+1}
			\left\| {\eta _{\psi , + }^{m + 1}} \right\| \le \left\| {\eta _{\psi , + }^1} \right\| + C\tau \sum\limits_{n = 1}^m {\left( {\left\| {\eta _{\psi , + }^{n + 1}} \right\| + \left\| {\eta _{\psi , + }^n} \right\|} \right)}  + C\left( {{\tau ^2} + {h^{k + 1}}} \right),
		\end{equation}
		then applying Gronwall's inequality in Lemma \ref{Gronwall’s inequality}, we conclude
		\begin{equation}\label{control etam+1+}
			\left\| {\eta _{\psi , + }^{m + 1}} \right\| + \left\| {{D_\tau }\eta _{\psi , + }^{m + 1}} \right\| + \left\| {{D_\tau }\eta _{\psi , + }^m} \right\| \le C\left( {{\tau ^2} + {h^{k + 1}}} \right).
		\end{equation}
		Similarly for ${\eta _{\psi , - }^{m + 1}}$, we obtain
		\begin{equation}\label{control etam+1-}
			\left\| {\eta _{\psi , - }^{m + 1}} \right\| + \left\| {{D_\tau }\eta _{\psi , - }^{m + 1}} \right\| + \left\| {{D_\tau }\eta _{\psi , - }^m} \right\| \le C\left( {{\tau ^2} + {h^{k + 1}}} \right),
		\end{equation}
		which verifies \eqref{step5conclusion}. 
		
		\textbf{Step 6.} 
		In this step, we prove that \eqref{intro Z} and \eqref{intro phi} also hold for $n=m+1$, namely,
		\begin{align}
			\left\| {e_{Z, \pm }^{m + \frac{3}{2}}} \right\| \le C\left( {{\tau ^2} + {h^{k + 1}}} \right), \\
			\left\| {e_{\phi}^{m + \frac{3}{2}}} \right\| \le C\left( {{\tau ^2} + {h^{k + 1}}} \right).
		\end{align}
		By \eqref{inverse inequality}, \eqref{boundedness of the true solution}, \eqref{control etam+1+} and \eqref{control etam+1-}, there exists a constant $h_6>0$, such that for $h<h_6$,
		\begin{equation}\label{m1bdd}
			\|\psi_{\pm,h}^{m+1}\|_{\infty} \le \|R_h\psi_\pm^{m+1}\|_{\infty} + \|\eta_{\psi,\pm}^{m+1}\|_{\infty} \le \|R_h\psi_\pm^{m+1}\|_{\infty} + Ch^{-\frac{d}{2}}\|\eta_{\psi,\pm}^{m+1}\| \le D_{\psi}+1.
		\end{equation}
		Following the approach in \eqref{eta3/2-eta-1/2}, we subtract the two resulting equations of \eqref{deErrorEq4} at $t_n$ and $t_{n-1}$, and set ${\chi _{h, + }} = \eta _{Z, + }^{n + \frac{1}{2}} + \eta _{Z, + }^{n - \frac{3}{2}}$. This yields
		\begin{equation}
			\left\|\eta_{Z,+}^{n+\frac12}\right\|^2-\left\|\eta_{Z,+}^{n-\frac32}\right\|^2=(S_{2,+}^{n}-S_{2,+}^{n-1},\eta _{Z, + }^{n + \frac{1}{2}} + \eta _{Z, + }^{n - \frac{3}{2}})+(T_{1,+}^{n}-T_{1,+}^{n-1},\eta _{Z, + }^{n + \frac{1}{2}} + \eta _{Z, + }^{n - \frac{3}{2}}).
		\end{equation}
		By the Cauchy--Schwarz inequality, 
		\begin{equation}\label{etaZn+1/2}
			\left\|\eta_{Z,+}^{n+\frac12}\right\|-\left\|\eta_{Z,+}^{n-\frac32}\right\|\le \left\|S_{2,+}^{n}-S_{2,+}^{n-1}\right\| +\left\|T_{1,+}^{n}-T_{1,+}^{n-1}\right\|.
		\end{equation}
		Arguing as in \eqref{xi Z 1}–\eqref{S21-S20}, we have
		\begin{equation}\label{S2n+}
			\left\|S_{2,+}^{n}-S_{2,+}^{n-1}\right\| \le C\tau(\tau^2+h^{k+1}).
		\end{equation}
		Similarly, using \eqref{lastT11-T10} together with \eqref{m1bdd}, we have
		\begin{equation}\label{T1n+}
			\left\|T_{1,+}^{n}-T_{1,+}^{n-1}\right\|
			\le
			C\tau\left(\left\|D_\tau\eta_{\psi,h,+}^{n}\right\|+\left\|\eta_{\psi,+}^{n}\right\|\right)
			+
			C\tau(\tau^2+h^{k+1}).
		\end{equation}
		Substituting \eqref{S2n+} and \eqref{T1n+} into \eqref{etaZn+1/2}, we arrive at
		\begin{equation}\label{etadiff}
			\left\| {\eta _{Z, + }^{n + \frac{1}{2}}} \right\| - \left\| {\eta _{Z, + }^{n - \frac{3}{2}}} \right\|  \le C\tau \left( {\left\| {{D_\tau }\eta _{\psi , + }^n} \right\| + \left\| {\eta _{\psi , + }^n} \right\|} \right) + C\tau \left( {{\tau ^2} + {h^{k + 1}}} \right).
		\end{equation}
		Summing \eqref{etadiff} over \( n = 1 \) to \( m+1 \), we obtain
		\begin{equation}
			\left\| {\eta _{Z, + }^{m + \frac{3}{2}}} \right\| \le C\tau \sum\limits_{n = 1}^{m + 1} {\left( {\left\| {{D_\tau }\eta _{\psi , + }^n} \right\| + \left\| {\eta _{\psi , + }^n} \right\|} \right)}  + C\left( {{\tau ^2} + {h^{k + 1}}} \right).
		\end{equation}
		Applying the induction hypothesis \eqref{intro psi}, \eqref{intro Dpsi}, and \eqref{intro Z} with \( n = m \), we simplify
		\begin{equation}
			\left\| {\eta _{Z, + }^{m + \frac{3}{2}}} \right\| \le C\left( {{\tau ^2} + {h^{k + 1}}} \right).
		\end{equation}
		Together with the projection error bound from \eqref{Ritz projection error}, we conclude
		\begin{equation}
			\left\| {e_{Z, + }^{m + \frac{3}{2}}} \right\| \le \left\| {\xi_{Z, + }^{m + \frac{3}{2}}} \right\| + \left\| {\eta_{Z, + }^{m + \frac{3}{2}}} \right\| \le C\left( {{\tau ^2} + {h^{k + 1}}} \right).
			\label{eZ+n}
		\end{equation}
		Following the same procedure for \( e_{Z, - }^{m + \frac{3}{2}} \), we similarly obtain
		\begin{equation}
			\left\| {e_{Z, - }^{m + \frac{3}{2}}} \right\| \le C\left( {{\tau ^2} + {h^{k + 1}}} \right).
			\label{eZ-n}
		\end{equation}
		Finally, combining \eqref{eZ+n} and \eqref{eZ-n}, then using Lemma \ref{propties of A1} and \eqref{ErrorEq3} implies
		\begin{equation}
			\left\| {e_\phi ^{m + \frac{3}{2}}} \right\| \le 4\pi \left| q \right|\left( {\left\| {e_{Z, + }^{m + \frac{3}{2}}} \right\| + \left\| {e_{Z, - }^{m + \frac{3}{2}}} \right\|} \right) + C{h^{k + 1}} \le C\left( {{\tau ^2} + {h^{k + 1}}} \right).
		\end{equation}
		Therefore, the estimates \eqref{intro psi}-\eqref{intro phi} hold for $n=m+1$ if $t_0:=\min{\{\tau_i\}}_{i=1}^4$ and $h_0:=\min\{h_j\}_{j=1}^6$. The constants $\tau_0$ and $h_0$ may depend on $T$, but are independent of $N$. 
		By the method of induction, the estimates \eqref{TH3.7 psi}-\eqref{TH3.7 phi} hold.
	\end{proof}

	\section*{Acknowledgments}
	Li's research was supported by the Postgraduate Scientific Research Innovation Project of Xiangtan University, China (XDCX2024Y179). 
	Yang's research was supported by the National Natural Science Foundation of China Project (No. 12571469), Scientific Research Innovation Capability Support Project for Young Faculty of China (No. SRICSPYF-BS2025132), the Project of Scientific Research Fund of the Hunan Provincial Science and Technology Department (No. 2024JJ1008), the 111 Project (No. D23017), and Program for Science and Technology Innovative Research Team in Higher Educational Institutions of Hunan Province of China.
	P. Yin’s research was supported by the University of Texas at El Paso Startup Award.
	
	\bibliographystyle{plain}
	\bibliography{refs}

\end{document}

%% file: ex_shared.tex
\usepackage{lipsum}
\usepackage{amsfonts}
\usepackage{graphicx}
\usepackage{epstopdf}
\ifpdf
\DeclareGraphicsExtensions{.eps,.pdf,.png,.jpg}
\else
\DeclareGraphicsExtensions{.eps}
\fi

\usepackage{booktabs}
\usepackage{stmaryrd}
\usepackage{bm}
\usepackage{caption}
\usepackage{subcaption}
\usepackage{multirow}
\usepackage{enumitem}

\usepackage{geometry}
\usepackage{float}
\usepackage[linesnumbered, ruled, vlined]{algorithm2e}

\usepackage{./macros}
\usepackage{amsopn}
\usepackage{hyperref}

\newsiamthm{assumption}{Assumption}

\newsiamremark{remark}{Remark}
\newsiamremark{hypothesis}{Hypothesis}
\crefname{hypothesis}{Hypothesis}{Hypotheses}
\newsiamthm{claim}{Claim}
\newsiamremark{exmp}{Example}

\headers{}{}

\title{A fully decoupled and structure-preserving relaxation Crank--Nicolson finite element method for Gross--Pitaevskii--Poisson model}

\author{Dongqian Li\thanks{Hunan Key Laboratory for Computation and Simulation in Science and Engineering, Hunan International Scientific and Technological Innovation Cooperation Base of Computational Science, Xiangtan University, Xiangtan 411105, Hunan, China  
\email{lidongqian@smail.xtu.edu.cn})}
\and Huini Liu\thanks{College of Mathematics and Statistics, Hengyang Normal University, Hengyang 421008, Hunan, China \
email{liuhuini@smail.xtu.edu.cn})}
\and Yin Yang\thanks{Hunan Research Center of the Basic Discipline Fundamental Algorithmic Theory and Novel Computational Methods, Key Laboratory of Intelligent Computing and Information Processing of Ministry of Education, Xiangtan University, Xiangtan 411105, Hunan, China   (\email{yangyin@xtu.edu.cn}).}
\and Peimeng Yin\thanks{Department of Mathematical Sciences, The University of Texas at El Paso, El Paso, TX 79968, USA (\email{pyin@utep.edu}).}} 
